\newtheorem{theorem}{Theorem}[section]
\newtheorem{lemma}[theorem]{Lemma}
\newtheorem{remark}[theorem]{Remark}
\newtheorem{problem}[theorem]{Problem}
\theoremstyle{definition}
\numberwithin{equation}{section}
\newcommand{\lrb}[1]{\left( #1 \right)}
\def\p{\partial}
\def\no{\nonumber}
\def\vep{\varepsilon}
\def\ga{\gamma}
\def\Ga{\Gamma}
\def\al{\alpha}
\def\si{\sigma}
\def\ka{\kappa}
\def\de{\delta}
\def\lam{\lambda}
\def\mf{\mathbf}
\def\mr{\mathscr}
\def\u{\mathbf{u}}
\def\h{\mathbf{h}}
\def\f{\mathbf{f}}
\def\mlA{\mathcal{A}}
\def\mlB{\mathcal{B}}
\def\mlF{\mathcal{F}}
\def\mlQ{\mathcal{Q}}
\def\mlP{\mathcal{P}}
\def\mlU{\mathcal{U}}
\def\mlV{\mathcal{V}}
\def\mlG{\mathcal{G}}
\def\I{\mathcal{I}}
\def\tu{\tilde{u}}
\def\tc{\tilde{c}}
\def\tp{\tilde{p}}
\def\tP{\tilde{P}}
\def\tS{\tilde{S}}
\def\tB{\tilde{B}}
\def\tM{\tilde{M}}
\def\trho{\tilde{\rho}}
\def\tka{\tilde{\kappa}}
\def\bc{\bar{c}}
\def\bu{\bar{u}}
\def\bh{\bar{h}}
\def\bU{\bar{U}}
\def\bp{\bar{p}}
\def\bP{\bar{P}}
\def\bS{\bar{S}}
\def\bB{\bar{B}}
\def\bM{\bar{M}}
\def\bK{\bar{K}}
\def\brho{\bar{\rho}}
\def\bet{\bar{\eta}}
\def\bka{\bar{\kappa}}
\def\hu{\hat{u}}
\def\hB{\hat{B}}
\def\hS{\hat{S}}
\def\hka{\hat{\kappa}}
\def\du{\dot{u}}
\def\dg{\dot{g}}
\def\dW{\dot{W}}
\def\dP{\dot{P}}
\def\dS{\dot{S}}
\def\dN{\dot{N}}
\def\dB{\dot{B}}
\def\dka{\dot{\kappa}}
\def\deta{\dot{\eta}}
\def\dGa{\dot{\Gamma}}
\def\ddg{\ddot{g}}
\def\ddu{\ddot{u}}
\def\ddS{\ddot{S}}
\def\ddB{\ddot{B}}
\def\ddka{\ddot{\kappa}}
\def\ddeta{\ddot{\eta}}
\def\dd{\ddot}
\def\i{\mathfrak{i}}
\begin{document}

\title[The super-Alfv\'{e}nic shock] {Steady super-Alfv\'{e}nic  MHD shocks with aligned fields in  two-dimensional almost flat nozzles}

\author[S.K. Weng]{Shangkun Weng}
\address[Shangkun Weng]{\newline School of Mathematics and Statistics, Wuhan University, Wuhan, Hubei Province, 430072, People's Republic of China.}
\email{skweng@whu.edu.cn}

\author[W.G. Yang]{Wengang Yang}
\address[Wengang Yang]{\newline School of Mathematics and Statistics, Wuhan University, Wuhan, Hubei Province, 430072, People's Republic of China.}
\email{yangwg@whu.edu.cn}

\begin{abstract}
The Lorentz force induced by the magnetic field in MHD flow introduces a fundamental difference from pure gas dynamics by facilitating the anisotropic propagation of small disturbances, thus the type of steady MHD equations depends on not only the Mach number but also the Alfv\'{e}n number. In the super-Alfv\'{e}nic case, we derive an admissible condition for the locations of transonic shock fronts in terms of the nozzle wall profile and the exit total pressure (the kinetic plus magnetic pressure). Starting from this initial approximation, a nonlinear existence of super-Alfv\'{e}nic transonic shock solution to steady MHD equations is established. Our admissible condition is slightly different from that first introduced by Fang-Xin in [Comm. Pure Appl. Math., 74 (2021), pp. 1493–1544], and because our formulation is based on the deformation-curl decomposition of the steady MHD equations, our admissible condition has the advantage that a direct generalization to three dimensional case is available at least at the level of the initial approximation of the shock position. 

%Based on the application of the deformation-curl decomposition, we derive a new admissible condition for the locations of transonic shock fronts, which is different from that first introduced by Fang-Xin in [Comm. Pure Appl. Math., 74 (2021), pp. 1493–1544]. 

%The interaction between the hydrodynamic motion and the magnetic field in a conducting fluid is of great importance in many scientific problems and introduces additional complexities and interesting new phenomena. In the simplified case of aligned magnetic and velocity fields, the type of steady MHD equations depends on not only the Mach number but also the Alfv\'{e}n number. We establish the existence and stability of the super-Alfv\'{e}nic transonic shock solutions in a two-dimensional, almost flat nozzle for a perfectly conducting fluid under suitable perturbations of the nozzle wall and the total pressure at the exit of the nozzle. {\color{blue} A key novelty of our analysis is the application of the deformation-curl decomposition, which replaces the flow angle decomposition used in [Comm. Pure Appl. Math., 74 (2021), pp. 1493–1544]. This approach extends their results to the MHD framework and establishes a new admissible condition for the locations of transonic shock fronts, explicitly expressed in terms of the nozzle wall profile and the exit total pressure.}
\end{abstract}

\keywords{steady MHD, structural stability, super Alfv\'{e}nic, transonic shock, hyperbolic-elliptic coupled, deformation-curl decomposition}
\subjclass[2020]{35L65, 35L67, 76N10, 76N15,76W05}
\date{}
\maketitle

\section{Introduction}\label{sec1}
%The two-dimensional motion of a compressible, perfectly conducting inviscid fluid is governed by the compressible magnetohydrodynamic (MHD) equations
%\begin{equation}\label{mhd_t0}
%\begin{cases}
%\p_t\rho+\nabla\cdot(\rho \u)=0,\\
%\p_t(\rho \u)+\nabla\cdot(\rho \u\otimes \u-\h\otimes \h)+\nabla(p+\frac{1}{2}|\h|^2)=0,\\
%\p_t(\rho B+\frac{1}{2}|\h|^2)+\nabla\cdot(\rho \u(B+\frac{p}{\rho})+\h\times(\u\times \h))=0,\\
%\p_t\h-\nabla\times(\u\times\h)=0,
%\end{cases}
%\end{equation}
%and 
%\begin{equation}\label{mhd_t1}
%\nabla\cdot\h=0,
%\end{equation}
The two-dimensional motion of a compressible, perfectly conducting inviscid fluid is governed by the steady compressible magnetohydrodynamic (MHD) equations: 
\begin{equation}\label{mhd0}
\begin{cases}
\p_{x_1}(\rho u_1)+ \p_{x_2} (\rho u_2)=0,\\
\p_{x_1}(\rho u_1^2 + p)+ \p_{x_2} (\rho u_1 u_2)=-h_2 (\p_{x_1} h_2 -\p_{x_2} h_1),\\
\p_{x_1}(\rho u_1 u_2 )+ \p_{x_2} (\rho u_2^2 +p)=h_1 (\p_{x_1} h_2 -\p_{x_2} h_1),\\
\p_{x_1}\left(\rho u_1 B+ h_2(u_1 h_2- u_2 h_1)\right)
+\p_{x_2}\left(\rho u_2 B-h_1(u_1 h_2- u_2 h_1)\right)=0,\\
\nabla_x(u_1 h_2- u_2 h_1)=0,\\
\p_{x_1} h_1 + \p_{x_2} h_2=0.
\end{cases}
\end{equation}
where $\rho$, $\u$, $p$, and $\h$ denote the density, velocity, pressure, and magnetic field, respectively. The quantity 
$$B = \frac{1}{2}|\u|^2+e + \frac{p}{\rho},$$
represents the specific total energy, with $e$ being the specific internal energy. The MHD equations provide a fundamental framework for modeling the dynamics of electrically conducting fluids, most notably plasmas, in the presence of magnetic fields\cite{BT2002,Fried1954}. A core aspect in the study of multidimensional hyperbolic conservation laws involves the analysis of fundamental waves, such as contact discontinuities, shocks, and rarefaction waves. Significant research has been directed toward understanding the stability and structure of vortex sheets \cite{ChenW2008,SunWZ2018} and contact discontinuities \cite{MTT2018,TrakWang2022,WangXin2024} for the unsteady MHD equations. In parallel,  steady-state solutions, characterized by time-independent flow, have also been examined in \cite{DT1950,Fried1954}. Here we consider the polytropic gases, the equation of state and the internal energy are
\begin{equation}\no
p=S\rho^{\ga},\,\,
e=\frac{p}{(\gamma-1)\rho}.
\end{equation} 
Here, $\gamma > 1$ denotes the adiabatic exponent, $S$ represents the entropy, and $s_0$ is a reference entropy constant. This paper aims to investigate the existence and stability of transonic shocks to \eqref{mhd0} in two-dimensional almost flat nozzles. 

In addition to the well-known dimensionless parameter, the Mach number $M^2=\frac{|\u|^2}{c^2(\rho,s)}$ which is defined as the ratio of the flow speed $|{\bf u}|$ to the local sound speed $c(\rho,s)=\sqrt{\p_\rho p(\rho,s)}$, there exists another key dimensionless quantity: the Alfv\'{e}n number. It is given by $A^2=\frac{|\u|^2}{c_a^2}$, where $c_a=\frac{|\h|^2}{\rho}$ denotes the Alfv\'{e}n wave speed. The acceleration of a nonconducting transonic flow in a de Laval nozzle results in a sonic transition ($M^2 = 1$) near the throat, where the governing equations change type from elliptic to hyperbolic. Due to the presence of a magnetic field, the flow is subjected to a Lorentz force, which provides a mechanism for the anisotropic propagation of small disturbances. This is the main difference between MHD and the pure gas dynamic flow. The types of the steady MHD equations may be either elliptic-hyperbolic or purely hyperbolic, depending not only on the Mach number, but also the Alfv\'{e}n number.  The acceleration of an infinitely conducting transonic flow with a velocity-aligned magnetic field must cross three transitions \cite{Chu1962,Grad1960}, specifically at $A^2 + M^2 = 1$, $A^2 = 1$, and $M^2 = 1$. The type of the governing differential equations changes accordingly: it is elliptic at low velocities, becomes hyperbolic after the first transition, reverts to elliptic after the second, and becomes hyperbolic again after the third. Consequently, the mathematical analysis of steady compressible MHD flows is significantly more complex than that of the Euler equations.

In the simplified but interesting case where the magnetic and velocity fields are aligned, the wealth of techniques developed in gas dynamics may also be applicable owing to the analogous mathematical structure. Specifically, we adopt the structural assumption:
\begin{equation}\label{colin_condi}
{\bf h}(x_1,x_2)= \kappa(x_1,x_2) (\rho {\bf u})(x_1,x_2),
\end{equation}
where $\kappa(x_1,x_2)$ represents a newly introduced scalar function that needs to be determined as part of the solution.

Substituting \eqref{colin_condi} into \eqref{mhd0}, we obtain
\begin{equation}\label{MHD}
\begin{cases}
\p_1(\rho u_1)+ \p_2 (\rho u_2)=0,\\
\p_1\left(\rho u_1^2(1-\rho\ka^2) + P\right)
+\p_2\left(\rho u_1 u_2 (1-\rho\ka^2)\right)=0,\\
\p_1\left(\rho u_1 u_2 (1-\rho \ka^2)\right)
+\p_2\left(\rho u_2^2(1-\rho\ka^2) + P\right)=0,\\
\rho(u_1\p_1+ u_2\p_2) B=0,\ \ \ \\
\rho(u_1\p_1+ u_2\p_2) \ka=0,
\end{cases}
\end{equation}
with the unknown vector $U=(u_1,u_2,p,S,\ka)$, where
\begin{equation*}
P=p+\frac{1}{2}|\h|^2=p+\frac{1}{2}\ka^2\rho^2(u_1^2+u_2^2)
\end{equation*}
denotes the total pressure, including both the kinetic and magnetic pressure.

Let the shock front be given by $x_1=\xi(x_2)$. The Rankine-Hugoniot jump conditions connect the states on the two sides of the shock:
\begin{align}
[\rho u_1]-\xi'(x_2)[\rho u_2]=0,\label{rh1}\\
[\rho u_1^2 + P-\ka^2\rho^2 u_1^2]
-\xi'(x_2)[(1-\ka^2\rho)\rho u_1 u_2]=0,\label{rh2}\\
[(1-\ka^2\rho)\rho u_1 u_2]
-\xi'(x_2)[\rho u_2^2 + P-\ka^2\rho^2u_2^2]=0,\label{rh3}\\
[B]=[\ka]=0.\label{rh4}
\end{align}

% Introduce the flow angle and the flow speed as follows:
% \begin{equation}\label{def_q_th}
% q=\sqrt{u_1^2 + u_2^2},\, \theta=\arctan\frac{u_2}{u_1}.
% \end{equation}

% The unknown functions can be rewritten as
% \begin{equation*}
% U=(p,\th,q,s,\ka).
% \end{equation*}

The background transonic shock is composed of two constant states:
\begin{equation*}
\bar{U}_\pm = (\bu_\pm,0,\bp_\pm,\bS_\pm, \bka_\pm),
\end{equation*}
which satisfy the jump conditions:
\begin{equation}
\begin{cases}
[\bar{\rho} \bu] = 0, \\
[\bar{\rho} \bu^2 + \bar{p}] = 0, \\
[\bar{B}] = [\bar{\kappa}] = 0,
\end{cases}
\end{equation}
with $[\bar{p}] = \bp_+ -\bp_- > 0$. It follows that
\begin{equation*}
\bB_+=\bB_-:=\bB,\,\bka_+=\bka_-:=\bka,
\end{equation*}
and the magnetic field for the background transonic flow is given by $\bh=\bka \brho_+\bu_+=\bka \brho_- \bu_-$. The corresponding Alfv\'{e}n numbers are defined as
\begin{equation}\label{def_alf_bk}
\bar{A}_\pm^2=\brho_\pm \frac{\bu_\pm^2}{|\bh|^2}=\frac{1}{\brho_\pm \bka^2}.
\end{equation}

To our purpose, we assume that the Alfv\'{e}n numbers of the background transonic shock states are greater than $1$, i.e. 
\begin{equation}\label{assump_bar_ka}
\bar{A}_{\pm}>1,\,\, \text{ or equivalently, } \,\,\bar{\kappa}^2<\frac{1}{\bar{\rho}_+}< \frac{1}{\bar{\rho}_-}.
\end{equation}
Then under \eqref{assump_bar_ka}, we should prescribe suitable boundary conditions at the entrance and exit of the nozzle to get a well-posed boundary value problem to investigating the stability of the shock solutions.

The two-dimensional nozzles to be studied in this article are given by
\begin{equation}\label{nozzle_x}
N :=\{(x_1,x_2):L_0\leq x_1\leq L_1,\, w_0(x_1)\leq 
x_2\leq w_1(x_1)\},
\end{equation}
so the nozzle walls are
\begin{equation}\label{wall_W_x}
W_{\i}:=
\{ 
(x_1,x_2)\,:\, x_2=w_\i(x_1),\,L_0\leq x_1\leq L_1
\},\,\i=0,1.
\end{equation}

Notice that the density $\rho$ and the pressure $p$ can be expressed by
\begin{equation}\label{def_rho_p}
\begin{split}
\rho&=\rho(B,S,|\u|^2)=\lrb{\frac{\ga-1}{\ga S}\lrb{B-\frac{1}{2}|\u|^2}}^{\frac{1}{\ga-1}},\\
p&=\rho(B,S,|\u|^2)=\lrb{\frac{\ga-1}{\ga S^\frac{1}{\ga}}\lrb{B-\frac{1}{2}|\u|^2}}^{\frac{\ga}{\ga-1}}.
\end{split}
\end{equation}

Employing these identities, we can rewrite system \eqref{MHD} as
\begin{equation}\label{defcurl_MHD}
\begin{cases}
(c^2-u_1^2)\p_1 u_1+(c^2-u_2^2)\p_2 u_2-u_1 u_2\lrb{\p_1 u_2+ \p_2 u_1} =0,\\
\p_1 (u_2(1-\rho \ka^2))-\p_2 (u_1(1-\rho \ka^2))\\
=-\frac{1}{u_1}\p_2 B+\frac{\rho^{\ga-1}}{(\ga-1)u_1}\p_2 S+\rho\ka\frac{|\u|^2}{u_1}\p_2\ka,\\
(u_1 \p_1 + u_2 \p_2 )(S,B,\ka)=0,
\end{cases}
\end{equation}
with unknown vector $\mlU=(u_1,u_2,S,B,\ka)$, where $c^2=c^2(B,|\u|^2)$ denotes the sound speed. The corresponding background solution is denoted by $\bar\mlU_\pm=(\bu_\pm,0,\bB,\bS_\pm,\bka)$.

We assume that
\begin{equation}\label{wall_eq_w_x}
w_0(x_1)\equiv 0,\,w_1(x_1)=1+ \si f(x_1),
\end{equation}
for some positive constant $\si$, and $f(x_1)\in C^{3,\al}([L_0,L_1])$ satisfying the following compatibility conditions:
\begin{equation}\label{comp_nozzle}
f^{(j)}(L_0)=0,j=0,1,2,3.
\end{equation}

Throughout this paper, the index $\i$ takes values in $\{0,1\}$. The entrance $\Ga_0$ and exit $\Ga_1$ of the nozzle $N$ are
\begin{equation}\label{En_Ex_x}
\Ga_\i:=\{(x_1,x_2):x_1=L_\i,\,0<x_2<w_1(L_\i)\}.
\end{equation}

The total pressure $P$ at the exit of the nozzle is prescribed by
\begin{equation}\label{end_pre_x} 
P(L_1,x_2)=\bP_+ +\si P_{ex}(x_2),
\end{equation}
where $\bP_+=\bp_++\frac{1}{2}\bka^2 \brho^2 \bu^2$ is the total pressure for the background solution, and $P_{ex}(x_2)\in C^{2,\al}(\Ga_1)$ satisfies
\begin{equation}
\|P_{ex}(x_2)\|_{2,\al;\Ga_1}<\infty
\end{equation}
for some $\al\in(0,1)$. 

\begin{figure}[ht]
\centering
\begin{tikzpicture}
\draw[->,thick] (-1.5,0)  -- (6.5,0) node (xaxis) [right] {$x_1$};
\draw[->,thick] (-1,-1) -- (-1,3.5) node (yaxis) [above] {$x_2$};

\draw[-,dashed] (0,2.5) -- (5.5,2.5);

\draw[-,thick] (0,0)node [below]{$x_1=L_0$} -- (0,2.5) ;
\draw[-,thick] (5.5,0)node [below]{$x_1=L_1$} -- (5.5,2.6) ;

%\draw[-] (3.5,1.5) -- (3.5,-1.5) node [below]{$x_1=L_s$};

%\draw[-] (8.2,1) -- (8.2,1.01) node [right]{$P=P_e+\epsilon P_{ex}$};

\draw[->] (0.3,0.7)  --(2.4,0.7);
\draw[->] (0.3,1.7)  --(2.4,1.7);
\draw[-] (0.2,1.2)  --(0.2,1.2) node [right]{$Supersonic$};

\draw[->] (3.3,0.7)  --(5.3,0.7);
\draw[->] (3.3,1.7)  --(5.3,1.7);
\draw[-] (3.4,1.2)  --(3.4,1.2) node [right]{$Subsonic$};

\draw[-,dashed]  (2.9,2.6)--(2.9,0)  node [below]{$Shock$};

\draw[->] (3.5,2.9) node[above]{$x_2=1+\sigma f(x_1)$} --(3.2,2.6);

\draw[domain=0:5.5, smooth, thick] plot (\x, {0.04*(\x)*exp(-0.026*(\x-1)^2)*sin(4.7*\x r) +2.5});

\draw[domain=2.9:5.45,smooth,rotate around ={90:(2.9 ,0)}] plot(\x,{(0.06)*sin(6*\x r)});	

\end{tikzpicture}
\caption{Nozzle}
\end{figure}

This work aims to construct transonic shock solutions to the MHD equations \eqref{MHD}. We now present a detailed formulation of this problem.
\begin{problem}\label{prob1}(Transonic shock problem.) Suppose that \eqref{assump_bar_ka} holds, we investigate the existence of a shock solution $(\mlU_\pm(x);\xi(x_2))$ to the steady MHD equations \eqref{defcurl_MHD} in the nozzle $N$, which fulfills the following properties.
\begin{enumerate}

\item The location of the shock is given by
\begin{equation}\label{def_Ga_s}
\Ga_s:=\{(x_1,x_2):x_1=\xi(x_2),0<x_2<x_2^*\},
\end{equation}
where $(\xi(x_2^*),x_2^*)$ is the intersection point of the shock profile with the upper nozzle wall $W_1$.

\item In the domain $N_-$, ahead of the shock:
\begin{equation}\label{def_N-}
N_-:=\{(x_1,x_2):L_0<x_1<\xi(x_2),\,0<x_2<w_1(x_1)\},
\end{equation}
$\mlU_-$ solves the MHD equations \eqref{defcurl_MHD} with the boundary conditions:
\begin{equation}\label{bdy_U-_Ga0_Wall}
\begin{aligned}
\mlU_-&=\bar\mlU_- &&\text{on}\,\Ga_0\\
-u_1w_\i'+u_2&=0&&\text{on}\,W_{\i-},
\end{aligned}
\end{equation}
where $W_{\i-}=\p N_-\cap W_\i$.

\item Behind the shock, $\mlU_+$ satisfies the MHD equations \eqref{defcurl_MHD}, the slip boundary condition on the wall:
\begin{equation}\label{bdy_U+_wall}
-u_1w_\i'+u_2=0\quad\quad\text{on}\,W_{\i+},
\end{equation}
with $W_{\i+}=\p N_+\cap W_\i$, and the end pressure at the exit:
\begin{equation}\label{bdy_p+}
P_+(L_1,x_2)=\bP_+ +\si P_{ex}(x_2),
\end{equation}
in the domain $N_+$ defined as
\begin{equation}\label{def_N+}
N_+:=\{(x_1,x_2):\xi(x_2)<x_1<L_1,\,0<x_2<w_1(x_1)\}.
\end{equation}

\item The solutions $\mlU_-$ and $\mlU_+$ satisfy the RH conditions \eqref{rh1}--\eqref{rh4} on the shock front $\Gamma_s$.
\end{enumerate}
\end{problem}

\begin{remark} 
Let 
$$\mlU^{in}(x_2)=(u_1^{in},u_2^{in},B^{in},S^{in},\ka^{in})(x_2)\in C^{2,\al}(\Ga_0)$$
be given such that $\|U^{in}\|_{2,\al;\Ga_0}<\infty$. The analysis presented herein remains valid  for perturbed initial conditions of the form:
\begin{equation*}
\mlU_-=\bar\mlU_-+\si \mlU^{in}(x_2),\,\text{on}\,\Ga_0.
\end{equation*}
\end{remark}

% Different from the pure gas flow where we prescribe the pressure at the exit, it is more advantageous to adopt the total pressure boundary condition \eqref{bdy_p+}, as we will reformulate the transonic shock Problem \ref{prob1} in terms of the total pressure $P$ and the flow angle in Problem \ref{prob2}. This formulation yields a more compact and simpler system for the unknowns $(P, \th, q, s, \ka)$ (see \eqref{eq_MHD_P}) compared to using $(p, \th, q, s, \ka)$ (see \eqref{eq_MHD_p}). Suppose that both the upstream and downstream states of the shock front are super-Alfv\'{e}nic \eqref{assump_bar_ka}, then \eqref{eq_MHD_P} are purely hyperbolic in upstream and elliptic-hyperbolic mixed in downstream and we can show the existence and stability of the shock solutions under suitable perturbations of the nozzle wall and the total pressure.
%Moreover, under perturbations of the total pressure $P$, the transonic shock remains stable provided the background transonic flow is super-Alf\'{v}enic, i.e. $\bar{A}_\pm^2>1$. %In contrast, additional constraints on the background transonic flow may be required if one perturbs the static pressure $p$ at the nozzle exit.

\begin{remark}
If \eqref{assump_bar_ka} does not hold, then the situation is much more involved. If $\bar{A}_-^2<1$, then \eqref{defcurl_MHD} becomes elliptic-hyperbolic mixed in supersonic flow region. If $\bar{A}_+^2<1-\bar{M}_+^2$ (or $1-\bar{M}_+^2<\bar{A}_+^2<1$), then \eqref{defcurl_MHD} becomes elliptic-hyperbolic mixed (purely hyperbolic, respectively) in the subsonic flow region. Therefore, totally different types of boundary conditions at the entrance and exit should be prescribed accordingly. We leave these cases for future investigations.
\end{remark}

The transonic shock problem modeled by the steady Euler equations in nozzles is a fundamental yet challenging problem in mathematical fluid dynamics and has received significant attention in the literature, with a large body of work over the past few decades. The seminal works \cite{CF1948,EGM1984,L1982} provide early investigations of transonic shock phenomena through a quasi-one-dimensional model. Two types of transonic shock solutions typically serve as fundamental reference flows for the analysis of transonic shock problems within the framework of perturbation.

The first type comprises symmetric transonic shock solutions in a nozzle with a divergent section, exemplified by a radially symmetric shock in a two-dimensional angular sector or a spherically symmetric shock in a three-dimensional cone, in which the shock position can be uniquely determined from the exit pressure. The authors in \cite{LXY2009CMP,LXY2013} had proved the existence and stability of the transonic shock solution in divergent sectors under general perturbation of the nozzle wall and the exit pressure. Similar results for the axisymmetric perturbations of the nozzle shape and the pressure had been established in \cite{LXY2010JDE,WengXieXin2021}. The authors in \cite{LiuXY2016} proved the stability of spherically symmetric subsonic flows and transonic shocks in a spherical shell by requiring that the background transonic shock solutions satisfy some ``Structural Conditions". The authors in \cite{Weng25ArxivSph,WengX24ArxicCyl} removed the ``Structural Conditions" and established the existence and stability of cylindrical and spherical transonic shocks under three-dimensional perturbations of the incoming flows and the exit pressure, by developing the deformation-curl decomposition \cite{WengX2019}, an elaborate reformulation of the Rankine-Hugoniot conditions and introducing the ``spherical projection coordinates" to resolve the artificial singularities in the spherical coordinates. By reformulating the steady MHD system in terms of the deformation tensor and modified vorticity, the existence and structural stability of super-Alfv{\'e}nic cylindrical transonic shock solutions have been established under three-dimensional perturbations of the incoming flow and the exit total pressure \cite{WengYArxiv2025}. The main content of the deformation-curl decomposition is to decompose the steady Euler equations as three transport equations for the Bernoulli's quantity, the entropy and the first component of the vorticity, a deformation-curl system for the velocity field.

%to rewrite the continuity equation as a restrictive equation for the deformation matrix of the velocity field using the Bernoulli's law, rewrite the momentum equations to represent the second and third components of the vorticity as two algebraic equations for the Bernoulli's quantity, the entropy and the first component of the vorticity. 

%One can use the momentum equations Together with the divergence free condition for the vorticity, one could derive a transport equation for the first component of the vorticity. Furthermore, the continuity equation can be rewritten as a Frobenius inner product of a symmetric matrix and the deformation matrix by employing the Bernoulli's law and representing the density as a function of the Bernoulli's quantity, the entropy and the velocity field.  

The second type of transonic shock solution is characterized by two constant states, with the shock position being arbitrary. The existence, uniqueness, and structural stability of these transonic shocks in nozzles were examined in \cite{ChenF2003,Chen2008,ChenYuan2008,XinYin2005} for multidimensional steady potential flow under a variety of boundary conditions.
Since the normal shock front in a flat nozzle can be located arbitrarily, there is no prior information to predict its position under wall perturbations. Therefore, a central difficulty in constructing transonic shock solutions is determining the shock front's location. Fang-Xin \cite{FangX2021} introduced a novel approach to establish the well-posedness of transonic shock solutions for the two-dimensional steady compressible Euler equations in an almost flat nozzle. 
See the recent generalizations to the axisymmetric flows \cite{FangG2021}, isothermal flows in a horizontal flat nozzle under vertical gravity \cite{FangG2022} and the two dimensional flow with a vertical magnetic field \cite{ZhaoZou2025}.  The key idea in \cite{FangX2021} is to first solve a free boundary problem for the linearized Euler system to get the most desirable information on the location of the shock front, then using its solution as an initial approximation and carrying out a further nonlinear iteration to establish the existence and stability of the transonic shock solution under perturbations to the nozzle wall and exit pressure. They crucially used the decomposition of 2-D steady Euler equations in the subsonic region as a first order elliptic system for the flow angle and the pressure, as well as transport equations for the entropy and Bernoulli's function, which was first observed in \cite{Serre1995} by Serre.

In contrast to \cite{FangX2021}, our approach utilizes a deformation-curl decomposition which was developed by Weng-Xin \cite{WengX2019} for the steady Euler equations. We adapt this decomposition to the steady MHD equations \eqref{MHD} in terms of the variables $(u_1,u_2,S,B,\ka)$, showing that the continuity equation is indeed a restriction equation on the deformation tensor for the velocity field, while the momentum equations give the equations for the curl of the velocity field. In the super-Alfv\'{e}nic case, we derive an admissible condition for the locations of transonic shock fronts in terms of the nozzle wall profile and the exit total pressure (the kinetic plus magnetic pressure). Starting from this initial approximation, a nonlinear existence of super-Alfv\'{e}nic transonic shock solution to steady MHD equations is established. Our admissible condition is slightly different from that first introduced by \cite{FangX2021}. Since the deformation-curl decomposition works well in the three dimensional setting, our admissible condition has the advantage that a direct generalization to three dimensional case is available at least at the level of the initial approximation of the shock position. For further information, one may refer to the main result Theorem \ref{main_thm} and the remarks thereafter.

The rest of the paper is structured as follows. Section \ref{sec2} reformulates the transonic shock problem by applying a Lagrangian transformation that flattens the streamlines and presents the main theorems of this paper. Section \ref{sec3} introduces a free boundary value problem for the linearized system around the background shock solution, aiming to determine the initial approximation of the shock front. Building on this approximation, Section \ref{sec4} develops a nonlinear iteration scheme, subsequently demonstrating that it is both well-defined and contractive, thus establishing the main result via the fixed point argument.

\section{Reformulation in Lagrangian Coordinates and Main Results} \label{sec2}
In this section, we introduce the Lagrangian transformation to straighten the nozzle 
wall, and reformulate the transonic shock problem \ref{prob1} into the Lagrangian coordinates. Define the coordinate mapping
\begin{equation}\label{mapping_xtoy}
(x_1, x_2)\mapsto (y_1, y_2)= \big(x_1, Y_2(x_1, x_2)\big),
\end{equation}
where the function $Y_2(x_1,x_2)$ satisfies the following system of ordinary differential equations:
\begin{equation}\label{Lag_transf_exp}
\begin{cases}
\frac{\p Y_2}{\p x_1}=-\rho_{-}u_{2-},\,
\frac{\p Y_2}{\p x_2}=\rho_{-}u_{1-},\,(x_1,x_2)\in N_-,\\
\frac{\p Y_2}{\p x_1}=-\rho_{+}u_{2+},\,
\frac{\p Y_2}{\p x_2}=\rho_{+}u_{1+},\,(x_1,x_2)\in N_+,\\
Y_2(L_0,0)=0,\,Y_2(L_1,0)=0.
\end{cases}
\end{equation}

It is clear that $Y_2(x_1,x_2)$ is well defined in $N_\pm$, respectively. It has been shown that the transformation $(x_1,x_2)\to(y_1,Y_2(x_1,x_2))$ is well-defined in the whole domain $N$ with Lipschitz continuity\cite{LXY2013}. The Jacobian determinant of this transformation is $\rho u_1$, nonzero in general. There exists an inverse mapping
\begin{equation*}
(y_1, y_2) \mapsto (x_1, x_2) = \big(y_1, X_2(y_1, y_2)\big).
\end{equation*}

Assuming that
\begin{equation}\label{nozzle_y}
\brho_-\bar{u}_-=\brho_+\bar{u}_+=1,
\end{equation}
under the mapping \eqref{mapping_xtoy}, the domain $N$ is changed into a rectangle:
\begin{equation*}
N^y=\{(y_1,y_2):L_0<y_1<L_1,\,0<y_2<1\}.
\end{equation*}

The entrance $\Ga_0$ and the exit $\Ga_1$ change to
\begin{equation}\label{En_EX_y}
\Ga_\i^y:=\{(y_1,y_2):y_1=L_\i,\,0<y_2<1\}.
\end{equation}

The nozzle walls were flattened as 
\begin{equation}\label{wall_W_y}
W_{\i}^y:=
\{ 
(y_1,y_2)\,:\, y_2=\i,\,L_0\leq y_1\leq L_1
\}.
\end{equation}

Define that
\begin{equation*}
(\tu_1,\tu_2, \trho, \tp, \tka)(y_1,y_2):=(u_1,u_2,\rho,p,\ka)(y_1,X_2(y_1,y_2)).
\end{equation*}
Then, the MHD equations \eqref{MHD} become to
\begin{equation}\label{eq_MHD_Lag}
\begin{cases}
\p_{y_1}\left(\frac{1}{\trho\tu_1}\right)-\p_{y_2}\left(\frac{\tu_2}{\tu_1}\right)=0,\\
\p_{y_1}\left(
\tu_1(1-\trho \tka^2)+\frac{\tP}{\trho\tu_1}
\right)
-\p_{y_2}\left(\frac{\tP \tu_2}{\tu_1}\right)=0,\\
\p_{y_1}\left((1-\trho \tka^2)\tu_2\right)
+\p_{y_2}\tP =0,\\
\p_{y_1} \tB=\p_{y_1} \tka=0,
\end{cases}
\end{equation}
where
\begin{equation*}
\tP=\tp+\frac{1}{2}\tka^2\trho^2(\tu_1^2+\tu_2^2).
\end{equation*}

The shock in the $y$-coordinates is represented by $y_1=\eta(y_2), y_2\in [0,1]$, thus the R-H conditions become
\begin{align}
\left[\frac{1}{\trho \tu_1}\right]+\eta'(y_2)\left[\frac{\tu_2}{\tu_1}\right]=0,\label{rh1_Lag}\\
\left[\tu_1(1-\trho \tka^2)+\frac{\tP}{\trho\tu_1}\right]
+\eta'(y_2)\left[\frac{\tP \tu_2}{\tu_1}\right]=0,\label{rh2_Lag}\\
[(1-\tka^2\trho) \tu_2]- \eta'(y_2)[\tP]=0\label{rh3_Lag},\\
[\tB]=[\tka]=0.\label{rh4_Lag}
\end{align}

The density $\trho$ and the pressure $\tp$ in the Lagrangian coordinates are changed into
\begin{equation}\label{exp_tilde_rho_p}
\begin{split}
\trho&=\trho(\tB,\tS,|\tilde\u|^2)=\lrb{\frac{\ga-1}{\ga \tS}\lrb{\tB-\frac{1}{2}|\tilde\u|^2}}^{\frac{1}{\ga-1}},\\
\tp&=\tp(\tB,\tS,|\tilde\u|^2)=\lrb{\frac{\ga-1}{\ga \tS^{\frac{1}{\ga}}}\lrb{\tB-\frac{1}{2}|\tilde\u|^2}}^{\frac{\ga}{\ga-1}}.
\end{split}
\end{equation}

It follows from \eqref{rh3_Lag} that
\begin{equation}\label{def_G0}
G_0(\mlU_+,\mlU_-):=[(1-\tka^2\trho) \tu_2]-\eta'(y_2)[\tP]=0.
\end{equation}

Utilizing \eqref{exp_tilde_rho_p} and substituting \eqref{def_G0} into in \eqref{rh1_Lag} and \eqref{rh2_Lag} yields 
\begin{align}
G_1(\mlU_+,\mlU_-)&:=\left[\frac{1}{\trho \tu_1}\right]+
\frac{[(1-\tka^2\trho) \tu_2]}{[\tP]}\left[\frac{\tu_2}{\tu_1}\right]=0,\label{def_G1}\\
G_2(\mlU_+,\mlU_-)&:=\left[\tu_1+\frac{\tP}{\trho\tu_1}-\tka^2\trho\tu_1\right]+
\frac{[(1-\tka^2\trho) \tu_2]}{[\tP]} \left[\frac{\tP \tu_2}{\tu_1}\right]=0.\label{def_G2}
\end{align}

Denoting
$$(\tu_1,\tu_2, \tS, \tB, \tka)(y_1,y_2):=(u_1,u_2,S,B,\ka)(y_1,X_2(y_1,y_2)),$$
$\tilde{\rho}$ and $\tilde{p}$ will be regarded as functions of $\tu_1,\tu_2, \tS, \tB$ as defined in \eqref{exp_tilde_rho_p}. Substituting \eqref{exp_tilde_rho_p} in to \eqref{defcurl_MHD}, then one has 
%then under the Lagrangian transformation, the system \eqref{defcurl_MHD} is changed into
\begin{equation}\label{defcurl_sys_y}
\begin{cases}
(\tc^2-\tu_1^2)(\p_{y_1}\tu_1-\trho \tu_2 \p_{y_2}\tu_1) +(\tc^2-\tu_2^2)\trho \tu_1 \p_{y_2} \tu_2\\
-\tu_1 \tu_2\lrb{ \p_{y_1}\tu_2-\trho \tu_2 \p_{y_2}\tu_2+ \rho\tu_1 \p_{y_2} \tu_1} =0,\\
\p_{y_1}(\tu_2(1-\trho \tka^2))-\trho \tu_2 \p_{y_2}(\tu_2(1-\trho \tka^2))-\trho \tu_1\p_{y_2}(\tu_1(1-\trho \tka^2))\\
=-\trho\p_{y_2}\tB+\frac{\trho^{\ga}}{\ga-1}\p_{y_2}\tS+\tka|\tilde\u|^2\p_{y_2}\tka,\\
\p_{y_1} \tB=\p_{y_1} \tS=\p_{y_1} \tka=0.
\end{cases}
\end{equation}

If the initial condition for $\tB,\tka$ is prescribed as
\begin{equation*}
(\tB,\tka)(L_0,y_2)=(\bB,\bka),
\end{equation*}
then 
\begin{equation}\label{exp_tilde_B_ka}
(\tB_\pm,\tka_\pm)(y_1,y_2)\equiv(\bB,\bka),
\end{equation}
where we have used the last equation in \eqref{defcurl_sys_y} and \eqref{rh4_Lag}.

It follows from \eqref{defcurl_sys_y} that $(\tu_1,\tu_2,\tS)$ satisfies
\begin{equation}\label{eq_u1_u2}
\begin{cases}
(1-\tM_1^2)\p_{y_1}\tu_1-\tM_1 \tM_2 \p_{y_1}\tu_2-\trho \tu_2 \p_{y_2}\tu_1 +\trho \tu_1 \p_{y_2} \tu_2=0,\\
\trho \tka^2 \tM_1 \tM_2 \p_{y_1}\tu_1+(1-\trho \tka^2+\trho\tka^2 \tM_2^2)\p_{y_1}\tu_2\\
-\trho \tu_1 C(\tilde\mlU) \p_{y_2}\tu_1-\trho \tu_2 C(\tilde\mlU)\p_{y_2}\tu_2
=\trho^\ga\frac{1+\ga \trho \tka^2|\tilde{\mf M}|^2}{\ga-1}\p_{y_2}\tS,
\end{cases}
\end{equation}
and 
\begin{equation}\label{eq_S_y}
\p_{y_1} \tS=0,
\end{equation}
where
\begin{equation}\label{def_C_ka_U}
\begin{split}
\tM_i&=\tu_i/\tc,\,i=1,2,\,\tilde{\mf M}=(\tM_1,\tM_2),\\
C(\tilde\mlU):&=1-\trho\tka^2+\trho \tka^2\tM^2.
\end{split}
\end{equation}

In what follows, we will frequently drop the notation $\tilde{}$  when there is no danger of confusion. The equations in \eqref{eq_u1_u2} can be rewritten as
\begin{equation}
\mlA_1(\mlU)\,\p_{y_1}\u+\mlA_2(\mlU)\p_{y_2}\u=\f(\mlU)
\end{equation}
where $\f(\mlU)=(0,\trho^\ga\frac{1+\ga \trho \tka^2|\tilde{\mf M}|^2}{\ga-1}\p_{y_2}\tS)^\top$, and
\begin{equation}
\begin{split}
\mlA_1(\mlU)&=
\begin{pmatrix}
1-M_1^2 &-M_1 M_2\\
\rho \ka^2 M_1 M_2 & 1-\rho \ka^2+\rho\ka^2 M_2^2
\end{pmatrix},\\
\mlA_2(\mlU)&=
\begin{pmatrix}
-\rho u_2 & \rho u_1\\
-\rho u_1 C(\mlU) & -\rho u_2 C(\mlU)
\end{pmatrix}.
\end{split}
\end{equation}

The two solutions of the algebraic equations 
$$\det(\mlA_1(\mlU)-\lam \mlA_2(\mlU))=0,$$
can be explicitly given by
\begin{equation}
\lam_\pm(\mlU)=\frac{1}{\rho |\u|^2}
\left(	-u_2 \pm u_1\sqrt{\frac{(1-\rho\ka^2)(M^2-1)}{C(\mlU)}}\right).
\end{equation}

The classification of the system \eqref{defcurl_sys_y} hinges on its first two equations \eqref{eq_u1_u2}, leading to a dependence on the eigenvalues $\lambda_\pm(\mathcal{U})$.
Indeed, the eigenvalues $\lambda_\pm(\mathcal{U})$ are either two real numbers, corresponding to a hyperbolic system, or a complex conjugate pair, in which case the equations form a first-order elliptic system. Due to the magnetic field effects, the eigenvalues $\lambda_\pm(\mathcal{U})$ exhibit dependence on both the Mach number and the Alfv\'{e}n number, which significantly alter the type of the governing differential equations in both supersonic and subsonic flows. 

%namely,
%\begin{equation}\label{assump_bar_ka}
%\bka^2<\frac{1}{\brho_+}<\frac{1}{\brho_-}.
%\end{equation}
Under the assumption \eqref{assump_bar_ka}, namely, the super-Alfv\'{e}nic case $\bar{A}_\pm^2>1$, for the solutions $\mlU$ that are sufficiently close to the background flow $\bar\mlU$, we have
\begin{equation}
C(\mathcal{U}) > 0, 1-\rho\kappa^2 > 0,
\end{equation}
and find that the eigenvalues $\lambda_\pm(\mathcal{U})$ are real for supersonic flow, while a pair of conjugate complex numbers for subsonic flow. In other words, the upstream supersonic flow for system \eqref{defcurl_sys_y} is purely hyperbolic, while the downstream subsonic flow is of mixed elliptic-hyperbolic type in this case.

Let us now continue reformulating the transonic shock problem. The shock divides the domain $N^y$ into two subdomains:
\begin{equation}
\begin{split}
N_-^y:=\{(y_1,y_2):L_0<y_1<\eta(y_2),\,0<y_2<1\},\\
N_+^y:=\{(y_1,y_2):\eta(y_2)<y_1<L_1,\,0<y_2<1\}.
\end{split}
\end{equation}

The nozzle walls can be expressed as $W_{\i\pm}:=W_\i^y\cap \p N_{\pm}^y$. The transonic shock Problem \ref{prob1} is reformulated in Lagrangian coordinates as follows.

\begin{problem}\label{prob2}(Transonic shock problem in Lagrangian coordinates) Under the assumption \eqref{assump_bar_ka}, we look for a shock solution $(\mlU_\pm(y);\eta(y_2))$ to the MHD system \eqref{exp_tilde_B_ka}-\eqref{eq_S_y} in the nozzle $N^y$, which fulfills the following properties.
\begin{enumerate}

\item The location of shock is given by
\begin{equation}
\Ga_s^y:=\{(y_1,y_2):y_1=\eta(y_2),0<y_2<1\}.
\end{equation}

\item $\mlU_-$ solves the MHD equations \eqref{exp_tilde_B_ka}-\eqref{eq_S_y} with the boundary conditions:
\begin{equation}
\begin{aligned}
\mlU_-(L_0,y_2)&=\bar\mlU_- &&\text{on}\,\Ga_0^y,\\
u_{2-}(y_1,\i)&=\i \si f'(y_1) u_{1-}(y_1,\i) &&\text{on}\,W_{\i-}^y,
\end{aligned}
\end{equation}
in the supersonic domain $N_-^y$.

\item Behind the shock, $\mlU_+$ satisfies the MHD equations \eqref{exp_tilde_B_ka}-\eqref{eq_S_y}, the slip boundary condition on the wall:
\begin{equation}
u_{2+}(y_1,\i)=\i \si f'(y_1) u_{1+}(y_1,\i)\,\,\text{on}\,W_{\i+}^y,
\end{equation}
and the end pressure at the exit:
\begin{equation}\label{end_pre_y}
P_+=\bP_+ +\si \tP_{ex}(y_2),\quad\text{on}\,\Ga_1^y,
\end{equation}
where $\tP_{ex}(y_2)=P_{ex}(X_2(L_1,y_2))$ and
\begin{equation*}
X_2(L_1,y_2)=\int_0^{y_2}\frac{1}{(\rho u_1)(L_1,\tau)}\,d\tau.
\end{equation*}

\item The solution $\mlU_-$ and $\mlU_+$ satisfies the RH conditions \eqref{rh1_Lag}-\eqref{rh4_Lag} on the shock front $\Ga_s^y$.
\end{enumerate}
\end{problem}

Before stating our main theorem, we first define the following weighted H$\ddot{o}$lder space on a rectangle domain $\mlQ$:
\begin{equation*}
\mlQ:=\{(y_1,y_2):L_0<y_1<L_1,\,0<y_2<1\}.
\end{equation*}
For $x=(x_1,x_2),\,y=(y_1,y_2) \in \mlQ $,\, set 
\begin{equation*}
d_x=\min\{x_2,1-x_2\},\, d_{x,y}=\min\{d_x,d_y\}.
\end{equation*}
Define the space
\begin{equation*}
C_{m,\al}^{(\de)}(\mlQ):=
\{
u\in C^{m,\al}(\mlQ)\,:\,\| u \|_{m,\al;\mlQ}^{(\de)}\leq\infty
\},
\end{equation*}
where the norm $\|\cdot\|_{m,\al;\mlQ}^{(\de)}$ is defined as 
\begin{equation*}
\| u \|_{m,\al;\mlQ}^{(\de)}=
\sum_{j=0}^{m} [u]_{j,0;\mlQ}^{(\de)}+[u]_{m,\al;\mlQ}^{(\de)},
\end{equation*}
and 
\begin{equation*}
\begin{split}
[u]_{k,0;\mlQ}^{(\de)}&=
\sup_{|\beta|=k} d_x^{\max\{k+\de,0\} } |D^\beta u|,\,
k=0,1,2\cdots\,m,\\
[u]_{m,\al;\mlQ}^{(\de)}&=
\sup_{|\beta|=m} d_{x,y}^{\max\{m+\al+\de,0\} }
\frac{|D^\beta u(x)-D^\beta u(y)|}{|x-y|^\al},
\end{split}
\end{equation*}
for $\de\in \mathbb{R},\,m\in \mathbb{N}$ and $0<\al<1$.

The main results of this paper are stated below.
\begin{theorem}\label{main_thm}
Assume that the background magnetic field strength $\bka$ satisfies \eqref{assump_bar_ka}. Let $\bet^*\in(L_0,L_1)$ be such that
\begin{equation}\label{admi1}
\begin{split}
\bK_0\int_0^1 P_{ex}(y_2)\,dy_2=-\bK f(\bet^*)+\bu_+ f(L_1),
\end{split}
\end{equation}
and $f'(\bet^*)\neq 0$, where  
\begin{equation}\label{def_barK0_K}
\begin{split}
\bK_0&=\frac{1-\bM_+^2}{\brho^2 \bu^2 C(\bar\mlU_+)}>0,\ \ \ \  C(\bar\mlU_+)=1-\bar{\rho}_+ \bar{\kappa}^2 +\bar{\rho}_+ \bar{\kappa}^2 \bar{M}_+^2,\\
\bK&=\lrb{\frac{1-\bM_+^2}{C(\bar\mlU_+)}\lrb{\frac{1}{\ga \bM_+^2}+\brho_+ \bka^2} +1} \frac{\bu_+}{\bp_+}[\bp]>0.
\end{split}
\end{equation}
Then there exists a small constant $\si_*>0$, depending only on the background transonic solution, such that for any $0<\si<\si_*$, the Problem \ref{prob2} admits a transonic shock solution $(\mlU_-,\mlU_+;\eta)$. Moreover, the solution satisfies the following estimate:
\begin{equation}
\begin{split}
\|\mlU_- - \bar\mlU_-\|_{2,\al;N^y_-}+
\|\mlU_+-\bar\mlU_+\|_{1,\al;N_+^y}^{(-\al)}
+\|\eta'\|_{1,\al;N_+^y}^{(-\al)}
+|\eta(1)-\bet^*|
\leq C\si,
\end{split}
\end{equation}
where $C$ depends on the background flow.
\end{theorem}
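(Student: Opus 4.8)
The plan is to follow the two-step strategy behind Problem~\ref{prob2}: first extract the leading-order shock location $\bet^*$ from a linearized free boundary problem, whose solvability condition is exactly \eqref{admi1}, and then close a nonlinear iteration around this approximation. I would begin with the upstream flow. Under the super-Alfv\'{e}nic assumption \eqref{assump_bar_ka} the system \eqref{defcurl_sys_y} is purely hyperbolic in $N_-^y$, so with the data $\mlU_-=\bar\mlU_-$ on $\Ga_0^y$ and the slip conditions on the almost flat walls, $\mlU_-$ is determined independently of the shock by the method of characteristics and is an $O(\si)$ perturbation of $\bar\mlU_-$ in $C^{2,\al}$; in particular the streamline invariants stay at their entrance values, giving $\tB_-\equiv\bB$ and $\tka_-\equiv\bka$. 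The jump relations $[\tB]=[\tka]=0$ together with $\p_{y_1}\tB=\p_{y_1}\tka=0$ then force $\tB_+\equiv\bB$, $\tka_+\equiv\bka$ throughout $N_+^y$, as in \eqref{exp_tilde_B_ka}, while the downstream entropy $\tS_+$ is obtained by transporting along horizontal streamlines its Rankine--Hugoniot trace on the shock. The problem thus reduces to solving the downstream velocity $(\tu_{1+},\tu_{2+})$, which by the subsonic branch of the eigenvalue analysis satisfies the first-order \emph{elliptic} system \eqref{eq_u1_u2} with source proportional to $\p_{y_2}\tS_+$.

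The heart of the construction is the subsonic boundary value problem with the shock $y_1=\eta(y_2)$ as a free boundary. I would use $G_0$ in \eqref{def_G0} to solve for $\eta'$ algebraically, and take the remaining relations $G_1,G_2$ of \eqref{def_G1}--\eqref{def_G2}, the wall slip conditions, and the exit total-pressure condition \eqref{end_pre_y} as boundary data for the elliptic velocity system. Linearizing about $\bar\mlU_\pm$ yields a linear first-order elliptic system for $(\delta u_1,\delta u_2)$ on the rectangle $N^y$, and such a system with these mixed oblique/Dirichlet data is solvable only under a single scalar compatibility condition. Integrating the linearized continuity relation over $N_+^y$ and inserting the boundary contributions produces precisely \eqref{admi1}: the exit contributes $\bK_0\int_0^1 P_{ex}\,dy_2$, while the wall slip and the shock trace contribute $\bu_+ f(L_1)$ and $-\bK f(\bet^*)$. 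This both defines $\bet^*$ and, because $f'(\bet^*)\neq0$, lets the implicit function theorem solve the compatibility condition for a nearby shock position at each step of the nonlinear scheme.

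The existence proof would then be a fixed-point argument, parametrized by the shock curve $\eta$. Each pass solves the hyperbolic upstream problem, evaluates the Rankine--Hugoniot traces on $y_1=\eta(y_2)$, straightens the curved domain $N_+^y$ to the fixed rectangle, solves the quasilinear elliptic velocity system coupled with the entropy transport, and updates the shock by enforcing the residual compatibility condition through the implicit function theorem. I would carry out all downstream estimates in the weighted H\"older spaces $C_{1,\al}^{(-\al)}$ in order to absorb the corner singularities at the intersection of $\Ga_s^y$ with the walls, and show that for $\si$ small enough the resulting map is a contraction, which yields the asserted bounds.

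The hard part will be closing this coupled free-boundary elliptic--hyperbolic iteration. Two intertwined obstacles stand out: the solvability constraint for the elliptic velocity system is \emph{nonlocal} in $\eta$, so the update of the shock position and the re-solution of the elliptic problem must be shown to contract simultaneously rather than in sequence; and the mixed elliptic--hyperbolic structure means the entropy transported from the free boundary reenters as the source of the elliptic system, so this feedback must be controlled in the weighted norms uniformly up to the corners, where the regularity of both $\eta$ and $(\tu_{1+},\tu_{2+})$ is weakest.
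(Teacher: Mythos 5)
Your proposal follows essentially the same route as the paper: upstream hyperbolic solve with $B,\kappa$ constant along streamlines, reformulation of the Rankine--Hugoniot conditions with $G_0$ determining $\eta'$ and $G_1,G_2$ serving as shock-front boundary data for the first-order elliptic velocity system, derivation of \eqref{admi1} as the solvability condition obtained by integrating the divergence-form equation over $N_+^y$, and a contraction iteration on the straightened domain in weighted H\"older spaces with the shock position updated at each step via the implicit function theorem applied to the nonlocal compatibility functional. The two difficulties you flag at the end are exactly the ones the paper resolves in Lemmas 4.2--4.5.
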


\begin{remark}
If instead the upper nozzle wall is given by $w_1(x_2)= 1 + \int_{L_0}^{x_1} \tan(\sigma \Theta(s)) ds$, one may reformulate the steady MHD equations in terms of the flow angle and the total pressure as was done in \cite{FangX2021} for steady Euler equations. Then the initial approximate shock position $\bar{\eta}^*$ is thus determined by 
\begin{eqnarray}\label{admi2}
\bar{K}_0 \int_0^1 P_{ex}(\tau)d\tau=-\bar{K}\int_{L_0}^{\bar{\eta}^*}\Theta(\tau)d\tau+\bar{u}_+\int_{L_0}^{L_1}\Theta(\tau)d\tau. 
\end{eqnarray}
If $\Theta(\bar{\eta}^*)\neq 0$, one can also establish the existence of a super-Alfv\'{e}nic transonic shock solution to the Problem \ref{prob2} as in Theorem \ref{main_thm}.  When $\bar{\kappa}=0$, \eqref{admi2} reduces to the admissible condition that was proposed in \cite{FangX2021} for the steady Euler equations.
\end{remark}

\begin{remark}
The admissible condition \eqref{admi1} is slightly different from \eqref{admi2} that first proposed in \cite{FangX2021}. However, our admissible condition \eqref{admi1} has a natural generalization to the three-dimensional case. Consider the stability problem of the super-Alfv\'{e}nic transonic shock problem in a rectangle cylinder $(L_0,L_1)\times (0,1)\times (0,1)$ with the perturbations of the wall shape $x_3=1+\sigma f(x_1,x_2)$ and the exit total pressure $P(L_1,x_2,x_3)=\bar{P}_++ \sigma P_{ex}(x_2,x_3)$. Using the deformation-curl decomposition as in \cite{Weng25ArxivSph,WengX24ArxicCyl,WengZZ2025}, then the following admissible condition can be derived to determine the initial approximate position of the shock front:
\begin{equation}\label{admi3}
\bar{K}_0 \int_0^1\int_0^1 P_{ex}(x_2,x_3)dx_2 dx_3=-\bK \int_0^1 f(\bet^*,x_2) dx_2+\bu_+ \int_0^1 f(L_1,x_2) dx_2. 
\end{equation}
To further construct the solution to the shock problem, beside using the reformulation of the Rankine-Hugoniot jump condition and the novel technique for the first order deformation-curl system developed in \cite{Weng25ArxivSph,WengX24ArxicCyl}, one has to overcome the low regularity of the solution near the intersection of the shock front with the wall of the nozzle, which causes a serious issue that the streamline may not be uniquely determined. We leave it for future research.
\end{remark}

%{\color{blue}
%We make some comments on the main results, Theorem \ref{main_thm}. When the magnetic field is neglected ($\bka=0$), Theorem \ref{main_thm} affirms the existence and stability of transonic shock solutions for the Euler equations, as previously shown in \cite{FangX2021}. However, Theorem \ref{main_thm} extends this result to the MHD system by introducing a deformation-curl decomposition, which differs fundamentally from prior approaches. Since the Bernoulli function $B$ and the magnetic field strength $\ka$ are conserved and continuous across discontinuities, they are entirely determined by the initial data. As a result, we only need to determine the first three quantities $(u_1,u_2,S)$, which greatly simplifies the problem. Furthermore, the framework of deformation-curl decomposition suggests potential for tackling more complicated problem; future work could explore its application to multidimensional shock problems and the stability analysis of sub-Alfvénic flows.
%}

\section{The initial approximating locations of the shock front}\label{sec3}

Let $\bet(y_2) = \bet^*$, where $\bet^*$ is an unknown constant to be determined. The initial approximate position of the shock front is given by the free line:
\begin{equation*}
\dGa_s:=\{(y_1,y_2): y_1=\bet^*\}.
\end{equation*}
Then $\dGa_s$ divides the domain $N^y$ into two rectangle regions $\dN_-$ and $\dN_+$ as
\begin{equation*}
\begin{split}
\dN_-&=\{(y_1,y_2): L_0<y_1<\bet^*,\ 0<y_2<1\},\\
\dN_+&=\{(y_1,y_2): \bet^*<y_1<L_1,\ 0<y_2<1\}.
\end{split}
\end{equation*}

The nozzle walls, entrance, and exit remain unchanged and are denoted by $\dW_\i=W_\i^y$, $\dGa_\i=\Ga_i^y$, respectively. Additionally, we define $\dW_{\i\pm}:=\p\dN_{\pm}\cap W_\i^y$.

The fluctuation is denoted as $\dot\mlU_\pm=(\du_{1\pm},\du_{2\pm},\dS_{\pm},\dB,\dka)$. The free surface will be determined simultaneously with the linear shock solution $(\dot\mlU_\pm;\deta')$. Linearizing the Rankine-Hugoniot conditions \eqref{def_G0}–\eqref{def_G2} around the reference flow on the free surface yields:
\begin{align}
G_0(\mlU_+,\mlU_-)&=\beta_0^+\cdot \dot\mlU_+ + \beta_0^-\cdot \dot\mlU_- -\deta' [\bp]+O(|\dot\mlU_\pm|^2),\,\,&&\text{on}\,\,\dGa_s,\label{lin_G0}\\
G_j(\mlU_+,\mlU_-)&=\beta_{j}^+\cdot \dot\mlU_+ + \beta_{j}^-\cdot \dot\mlU_-+O(|\dot\mlU_\pm|^2),\,j=1,2,\,&&\text{on}\,\,\dGa_s,\label{lin_G12}
\end{align}
where the coefficients $\beta_j^\pm,\,j=0,1,2$, are explicitly given by
\begin{align*}
\beta_0^\pm&=\pm\lrb{0,1-\brho_\pm\bka^2,0,0,0}^\top,\\
\beta_1^\pm&=\pm\frac{1}{\brho \bu}\lrb{\frac{\bM_\pm^2-1}{\bu_\pm},0,\frac{1}{(\ga-1) \bS_\pm},-\frac{1}{\bc_\pm^2}, 0   }^\top,\\
\beta_2^\pm&=\pm \lrb{\bar{d}_\pm \frac{\bM_\pm^2-1}{ \bM_\pm^2},0,
\frac{\brho_\pm \bu_\pm \bka^2}{2(\ga-1)\bS_\pm},
\frac{1-\bar{d}_\pm}{\bu_+},-\brho_\pm \bu_\pm \bka}^\top,
\end{align*}
where
\begin{equation*}
\bar{d}_\pm =\frac{1}{\ga}+\frac{1}{2}\brho_\pm\bka^2\bM_\pm^2.
\end{equation*}

The remainder of this section focuses on constructing the linear solutions $(\dot\mlU_\pm;\deta')$ together with the free interface $\dGa_s$, which will serve as an initial approximation for the nonlinear shock problem.

\subsection{\texorpdfstring{The solution $\dot\mlU _-$ in $N^y$}{}} The fluctuation $\dot\mlU_-$ satisfies the linearized MHD system around the uniform supersonic reference flow $\bar\mlU_-$:
\begin{equation}\label{eq_dot_U-}
\begin{cases}
(1-\bM_-^2)\p_{y_1}\du_{1-} +\brho \bu \p_{y_2} \du_{2-}=0,\\
(1-\brho_- \bka^2)\p_{y_1}\du_{2-}
-\brho \bu C(\bar\mlU_-) \p_{y_2}\du_{1-}
=\brho_-^\ga \frac{1+\ga \brho_-\bka^2 \bM_-^2}{\ga-1}\p_{y_2}\dS_-,\\
\p_{y_1}\dS_-=\p_{y_1}\dB_-=\p_{y_1}\dka_-=0.
\end{cases}
\end{equation}

The equations \eqref{eq_dot_U-} subject to the boundary conditions:
\begin{equation}\label{bd_U-_Ga0}
\dot\mlU_-(L_0,y_2)=0,\,\, 0<y_2<1,
\end{equation}
and
\begin{equation}\label{bd_U-_wall}
\du_{2-}(y_1,\i)=\i \si \bu_-f'(y_1) ,\,\,L_0<y_1<\bet^*.
\end{equation}

Since \eqref{eq_dot_U-} is a hyperbolic system with constant coefficients in the supersonic domain,  the application of the classical theory results in the following lemma.

\begin{lemma}\label{lem_dot_U-} Assume that \eqref{comp_nozzle} holds. There exists a unique solution $\dot\mlU_-$ to \eqref{eq_dot_U-}–\eqref{bd_U-_wall} satisfying the estimate
\begin{equation}\label{est_dot_U-}
\|\dot\mlU_-\|_{2,\al;N^y} \leq C \|\si \bu_-  f'\|_{2,\al;W_1} \leq C(\bar\mlU_-, L_0, L_1) \sigma,
\end{equation}
where $C(\bar\mlU_-, L_0, L_1)$ depends only on $\bar\mlU_-$, $L_0$, and $L_1$.

Furthermore, the solution satisfies the following properties:
\begin{equation}\label{qP_sk=0}
\dS_-=\dB_-=\dka_-=0,
\end{equation}
and the compatibility condition:
\begin{equation}\label{comp_dot_U-}
\frac{1-\bM_-^2}{\brho \bu} \int_0^1\du_{1-}(y_1,y_2)\,dy_2=-\si \bu_- f(y_1).
\end{equation}
\end{lemma}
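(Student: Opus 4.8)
The plan is to exploit the triangular structure of \eqref{eq_dot_U-}: the transport block decouples and forces the thermodynamic and magnetic fluctuations to vanish, after which one is left with a genuinely $2\times2$ constant-coefficient hyperbolic system for the velocity fluctuation. First I would integrate the transport equations $\p_{y_1}\dS_-=\p_{y_1}\dB_-=\p_{y_1}\dka_-=0$ in $y_1$: they show $\dS_-,\dB_-,\dka_-$ depend only on $y_2$, and the entrance condition \eqref{bd_U-_Ga0} fixes them to vanish at $y_1=L_0$, hence $\dS_-\equiv\dB_-\equiv\dka_-\equiv0$ on $N^y$. This is \eqref{qP_sk=0}, and it annihilates the right-hand side of the second equation of \eqref{eq_dot_U-}, reducing it to the homogeneous system
\begin{equation*}
\begin{cases}
(1-\bM_-^2)\p_{y_1}\du_{1-}+\brho\bu\,\p_{y_2}\du_{2-}=0,\\
(1-\brho_-\bka^2)\p_{y_1}\du_{2-}-\brho\bu\,C(\bar\mlU_-)\p_{y_2}\du_{1-}=0.
\end{cases}
\end{equation*}

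Next I would establish well-posedness of the resulting initial-boundary value problem in the rectangle $N^y$. Writing this block as $\mlA_1\p_{y_1}\v+\mlA_2\p_{y_2}\v=0$ with $\v=(\du_{1-},\du_{2-})^\top$ and the constant matrices obtained by evaluating $\mlA_1,\mlA_2$ at $\bar\mlU_-$, the characteristic slopes are the roots of $\det(\mlA_2-\mu\mlA_1)=0$, namely $\mu=\pm\lambda_0$ with $\lambda_0^2=\frac{\brho^2\bu^2 C(\bar\mlU_-)}{(\bM_-^2-1)(1-\brho_-\bka^2)}$. Under \eqref{assump_bar_ka} together with the supersonic condition $\bM_->1$ one has $C(\bar\mlU_-)>0$ and $1-\brho_-\bka^2>0$, so $\lambda_0$ is real and positive, consistent with the reality of $\lambda_\pm(\bar\mlU_-)$; the two families have slopes of opposite sign, hence exactly one of them is incoming at each horizontal wall $\{y_2=0\}$ and $\{y_2=1\}$. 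Thus prescribing the single scalar datum $\du_{2-}$ on each wall is the correct number of boundary conditions, and one checks that $\du_{2-}$ is not proportional to the outgoing Riemann invariant, so the condition is non-characteristic and pins down the incoming invariant in terms of the outgoing one. Diagonalizing into Riemann invariants, which are constant along the straight characteristics since the coefficients are constant, and tracing them back to either $\{y_1=L_0\}$ or a wall yields existence, uniqueness, and an explicit representation of $\dot\mlU_-$; the a priori bound \eqref{est_dot_U-} follows either from this representation or from a standard energy estimate for the symmetrizable system.

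I expect the genuine difficulty to lie in the regularity up to the corners $(L_0,0)$ and $(L_0,1)$, where the entrance datum $\dot\mlU_-=0$ meets the wall datum $\du_{2-}=\i\si\bu_- f'$. A hyperbolic problem generically propagates singularities along the characteristics issuing from such corners unless the data satisfy matching conditions there. This is precisely the role of \eqref{comp_nozzle}: since $f^{(j)}(L_0)=0$ for $j=0,1,2,3$, the wall datum and its derivatives vanish at $y_1=L_0$ to the order needed for $C^{2,\al}$ matching with the zero entrance datum, so the solution extends as a $C^{2,\al}$ function up to the closed rectangle. The bound by $\|\si\bu_- f'\|_{2,\al;W_1}\le C(\bar\mlU_-,L_0,L_1)\si$ then follows from $f\in C^{3,\al}$.

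Finally, the identity \eqref{comp_dot_U-} is obtained by averaging the first equation of the reduced system in $y_2$. Integrating $(1-\bM_-^2)\p_{y_1}\du_{1-}+\brho\bu\,\p_{y_2}\du_{2-}=0$ over $y_2\in(0,1)$ and using the wall conditions $\du_{2-}(y_1,1)=\si\bu_- f'(y_1)$ and $\du_{2-}(y_1,0)=0$ gives $(1-\bM_-^2)\frac{d}{dy_1}\int_0^1\du_{1-}\,dy_2=-\brho\bu\,\si\bu_- f'(y_1)$. Integrating in $y_1$ from $L_0$, where the entrance condition forces the average to vanish and $f(L_0)=0$, produces $\frac{1-\bM_-^2}{\brho\bu}\int_0^1\du_{1-}(y_1,y_2)\,dy_2=-\si\bu_- f(y_1)$, which is exactly \eqref{comp_dot_U-}.
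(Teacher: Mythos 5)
Your proposal is correct and follows essentially the same route as the paper: the existence, uniqueness and $C^{2,\al}$ estimate are delegated to classical constant-coefficient hyperbolic theory (with \eqref{comp_nozzle} supplying the corner compatibility), and the identity \eqref{comp_dot_U-} is obtained from the divergence structure of the first equation together with the wall and entrance data. The only cosmetic difference is that the paper introduces a potential function $\phi$ with $\p_{y_1}\phi=-\brho\bu\,\du_{2-}$, $\p_{y_2}\phi=(1-\bM_-^2)\du_{1-}$ and evaluates $\phi(y_1,1)-\phi(y_1,0)$, whereas you integrate the equation directly in $y_2$ and then in $y_1$; the two computations are identical in content.
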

\begin{proof}
It suffices to show \eqref{comp_dot_U-}. The first equation in \eqref{eq_dot_U-} suggests that there is a potential function $\phi(y_1,y_2)$ such that
\begin{equation*}
\p_{y_1}\phi=-\brho \bu \du_{2-},\,
\p_{y_2}\phi=(1-\bM_-^2) \du_{1-},\,
\phi(L_0,0)=0.
\end{equation*}

By initial conditions, we have $\p_{y_2}\phi(L_0,y_2)\equiv0$ and $\phi(L_0,y_2)\equiv 0$.

Utilizing the boundary conditions \eqref{bd_U-_Ga0} and \eqref{bd_U-_wall}, we obtain
\begin{equation*}
\begin{split}
&\frac{1-\bM_-^2}{\brho \bu} \int_0^1\du_{1-}(y_1,y_2)\,dy_2
=\frac{1}{\brho \bu}\int_0^1\p_{y_2}\phi(y_1,y_2)\,dy_2\\
=&\frac{\phi(y_1,1)-\phi(y_1,0)}{\brho \bu}=\frac{1}{\brho \bu}
\int_{L_0}^{y_1}\p_{y_1}\phi(\tau,1)-\p_{y_1}\phi(\tau,0)\,d \tau\\
=&-\int_{L_0}^{y_1}\du_{2-}(\tau,1)d\tau=-\si  \bu_-\int_{L_0}^{y_1} f'(\tau)d\tau=-\si \bu_- f(y_1),
\end{split}
\end{equation*}
which is exactly \eqref{comp_dot_U-}.
\end{proof}

\subsection{\texorpdfstring{The determination of $\dot\mlU_+$ and $\bet^*$}{}}
The fluctuation $\dot\mlU_+$ satisfies the linearized MHD system:
\begin{equation}\label{eq_dot_U+}
\begin{cases}
(1-\bM_+^2)\p_{y_1}\du_{1+} +\brho \bu \p_{y_2} \du_{2+}=0,\\
(1-\brho_+ \bka^2)\p_{y_1}\du_{2+}-\brho \bu C(\bar\mlU_+) \p_{y_2}\du_{1+}=\dot{f}_2,\\
\p_{y_1}\dS_+=\p_{y_1}\dB_+=\p_{y_1}\dka_+=0,
\end{cases}
\end{equation}
where $\dot{f}_2=\brho_+^\ga \frac{1+\ga \brho_+\bka^2\bM_+^2}{\ga-1}\p_{y_2}\dS_+$.

On the nozzle walls, the equations \eqref{eq_dot_U+} subject to the boundary conditions:
\begin{equation}\label{bd_U+_wall}
\du_{2+}(y_1,\i)=\i \si \bu_+ f'(y_1) ,\,\, \bet^*<y_1<L_1.
\end{equation}

% On the exit of the nozzle
% \begin{equation*}
% \du_{1+}(L_1,y_2)=\si \du_e(y_2),\,0<y_2<1,
% \end{equation*}
% where $\du_e(y_2)=u_e(y_2)$.

With the supersonic state $\dot\mlU_-$ being specified, the boundary conditions for $\dot\mlU_+$ at the free interface $y_1=\bet^*$ are completely determined by the linearized Rankine-Hugoniot conditions \eqref{lin_G12} and the upstream flow. Specifically, we impose the following boundary conditions on the free interface $y_1=\bet^*$:
\begin{equation}\label{bdy_lin_Rh_dot}
\beta_j^+\cdot\dot\mlU_+(\bet^*,y_2) + \beta_j^-\cdot \dot\mlU_-(\bet^*,y_2)=0,\,j=1,2.
\end{equation}
These relations ultimately determine the boundary values of $\du_{1+}$ and $\dS_+$ at $y_1=\bet^*$.

\begin{lemma}\label{RH_bd_condi}
On the free interface $y_1=\bet^*$, there holds 
\begin{equation}\label{bd_dot_u1_S}
\begin{cases}
\du_{1+}(\bet^*,y_2)=b_u^s \du_{1-}(\bet^*,y_2),\\
\dS_+(\bet^*,y_2)=b_s^s \du_{1-}(\bet^*,y_2),
\end{cases}
\end{equation}
where 
\begin{equation}\label{def_bus_bss}
\begin{split}
b_u^s&=\frac{\bM_+^2}{\bM_-^2}\frac{\bM_-^2-1}{\bM_+^2-1}=\frac{\bar{\rho}_-\bar{p}_-}{\bar{\rho}_+\bar{p}_+}\frac{\bM_-^2-1}{\bM_+^2-1},\\
b_s^s&=\frac{(\ga-1)\bS_+}{\bp_+\bu_-}(\bM_-^2-1)[\bp].
%b_s^s&=(\ga-1)\bS_+\frac{\bM_-^2-1}{\bM_-^2}\lrb{\frac{\bM_-^2}{\bu_-}-\frac{\bM_+^2}{\bu_+}}.
\end{split}
\end{equation}
\end{lemma}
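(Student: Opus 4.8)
The plan is to read off the boundary values of $(\du_{1+},\dS_+)$ on the free interface $y_1=\bet^*$ directly from the two linearized Rankine--Hugoniot relations \eqref{bdy_lin_Rh_dot} with $j=1,2$, treating them as a $2\times 2$ linear algebraic system with $\du_{1-}(\bet^*,y_2)$ as known data supplied by Lemma \ref{lem_dot_U-}. First I would carry out the reduction that kills all irrelevant components. Since $\dB=\dka=0$ identically by \eqref{exp_tilde_B_ka} and $\dS_-=0$ by \eqref{qP_sk=0}, and since the second entry of each $\beta_j^\pm$ vanishes (so that $\du_{2\pm}$ never enters), the pair of relations collapses to
\begin{equation*}
\begin{cases}
\frac{\bM_+^2-1}{\bu_+}\du_{1+} + \frac{1}{(\ga-1)\bS_+}\dS_+ = \frac{\bM_-^2-1}{\bu_-}\du_{1-},\\
\bar{d}_+\frac{\bM_+^2-1}{\bM_+^2}\du_{1+} + \frac{\brho_+\bu_+\bka^2}{2(\ga-1)\bS_+}\dS_+ = \bar{d}_-\frac{\bM_-^2-1}{\bM_-^2}\du_{1-}.
\end{cases}
\end{equation*}

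Next I would eliminate $\dS_+$. The coefficient of $\dS_+$ in the second line is exactly $\frac{\brho_+\bu_+\bka^2}{2}$ times that in the first, so subtracting $\frac{\brho_+\bu_+\bka^2}{2}$ times the first equation from the second removes $\dS_+$ cleanly. The key observation is that, inserting $\bar{d}_\pm=\frac{1}{\ga}+\frac12\brho_\pm\bka^2\bM_\pm^2$ and using the normalization $\brho_+\bu_+=\brho_-\bu_-=1$, the magnetic contributions cancel: the coefficient of $\du_{1+}$ reduces to $\frac{\bM_+^2-1}{\ga\bM_+^2}$, and that of $\du_{1-}$ to $\frac{\bM_-^2-1}{\ga\bM_-^2}$, the latter because $\frac12\brho_-\bka^2$ coincides with $\frac{\brho_+\bu_+\bka^2}{2\bu_-}$. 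This yields $\du_{1+}=\frac{\bM_+^2}{\bM_-^2}\frac{\bM_-^2-1}{\bM_+^2-1}\du_{1-}$, i.e. the first formula, with the magnetic field having disappeared entirely. The equivalent density--pressure form of $b_u^s$ then follows from $\bc_\pm^2=\ga\bp_\pm/\brho_\pm$, hence $\bM_\pm^2=\brho_\pm\bu_\pm^2/(\ga\bp_\pm)$, together with $\brho_\pm\bu_\pm=1$.

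Finally I would recover $\dS_+$ by back-substituting $b_u^s$ into the first equation. The coefficient of $\du_{1-}$ becomes $(\bM_-^2-1)\big(\frac{1}{\bu_-}-\frac{\bM_+^2}{\bu_+\bM_-^2}\big)$, and the background identity $\frac{\bM_+^2}{\bu_+\bM_-^2}=\frac{\bp_-}{\bu_-\bp_+}$ (again from $\bc_\pm^2=\ga\bp_\pm/\brho_\pm$ and the normalization) turns the bracket into $\frac{[\bp]}{\bu_-\bp_+}$, giving $b_s^s=\frac{(\ga-1)\bS_+}{\bp_+\bu_-}(\bM_-^2-1)[\bp]$ as claimed.

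The whole computation is a determinant evaluation, so the only genuine content is the exact cancellation of the magnetic terms in the elimination step; the main obstacle is organizing the algebra so that this cancellation is transparent, which hinges on writing $\bar{d}_\pm$ explicitly and exploiting $\brho_\pm\bu_\pm=1$. I would also record, for use in the nonlinear scheme, that the system is nondegenerate: its determinant equals $-\frac{\bM_+^2-1}{\ga\bM_+^2(\ga-1)\bS_+}\neq 0$ in the super-Alfv\'enic subsonic regime $\bM_+<1$, so the boundary data $(\du_{1+},\dS_+)$ on $\dGa_s$ are uniquely determined by $\du_{1-}$.
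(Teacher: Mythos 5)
Your proposal is correct and follows essentially the same route as the paper: both reduce the linearized Rankine--Hugoniot relations to the $2\times 2$ system $\bar\mlB_{s+}\dot\mlV_+=\bar\mlB_{s-}\dot\mlV_-$ for $(\du_{1+},\dS_+)$ using $\dB=\dka=\dS_-=0$ and then solve it, with your explicit elimination being just the "direct computation" the paper leaves implicit (and your determinant $\frac{1-\bM_+^2}{\ga(\ga-1)\bM_+^2\bS_+}$ is the correct value, matching the paper's up to an apparent typo in the power of $\bS_\pm$).
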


\begin{proof}
The R-H conditions \eqref{rh4_Lag} and $\p_{y_1}\dB_+=\p_{y_1}\dka_+=0$ imply that
\begin{equation}\label{exp_dotB_ka}
\begin{split}
& \dB_+(y_1,y_2)=\dB_+(\bet^*,y_2)=\dB_-(\bet^*,y_2)=0,\\
&\dka_+(y_1,y_2)=\dka_+(\bet^*,y_2)=\dka_-(\bet^*,y_2)=0.
\end{split}
\end{equation}

Denoting $\dot\mlV_\pm=( \du_{1\pm}, \dS_\pm)^\top$, then \eqref{bdy_lin_Rh_dot} simplifies to
\begin{equation}
\bar\mlB_{s+}  \dot\mlV_+(\bet^*,y_2)
=\bar\mlB_{s-}  \dot\mlV_-(\bet^*,y_2) ,
\end{equation}
where  $\bar\mlB_{s\pm}=(\bar{b}_{1\pm},\bar{b}_{2\pm})$ is a $2\times 2$ matrix and $\bar{b}_{1\pm},\bar{b}_{2\pm}$ are column vectors defined as
\begin{equation*}
\bar{b}_{1\pm}=(\frac{\bM_\pm^2-1}{\bu_\pm},\bar{d}_\pm \frac{\bM_\pm^2-1}{\bM_\pm^2})^\top,
\bar{b}_{2\pm}=\frac{1}{(\ga-1)\bS_\pm}(1,\frac{1}{2}\brho \bu \bka^2)^\top
\end{equation*}
% \begin{equation}
% \begin{cases}
% \frac{\bM_+^2-1}{\bu_+}\du_{1+} +\frac{1}{(\ga-1)\bS_+}\dS_+=\frac{\bM_-^2-1}{\bu_-}\du_{1-},\\
% \bar{d}_+\frac{\bM_+^2-1}{\bM_+^2}\du_{1+}+\frac{\brho \bu \bka^2}{2(\ga-1)\bS_+}\dS_+=
% \bar{d}_-\frac{\bM_-^2-1}{\bM_-^2}\du_{1-}.
% \end{cases}
% \end{equation}
Noting that
\begin{equation}
\det{\bar\mlB_{s\pm}}=\frac{1-\bM_\pm^2}{\ga(\ga-1)\bM_\pm^2 \bS_\pm^2}\neq0,
\end{equation}
we obtain 
\begin{equation*}
\dot\mlV_+(\bet^*,y_2)=\bar\mlB_{s+}^{-1}\bar\mlB_{s-} \dot\mlV_-(\bet^*,y_2).
\end{equation*}

Since $\dS_-\equiv0$, a direct computation shows that
\begin{equation*}
\dot\mlV_+(\bet^*,y_2)=\bar\mlB_{s+}^{-1}\bar\mlB_{s-} \dot\mlV_-(\bet^*,y_2)=(b_u^s,b_s^s)^\top \du_{1-}(\bet^*,y_2),
\end{equation*}
which completes the proof.
\end{proof}

% When the free interface $y_1=\bet^*$ is specified, the quantities
% \begin{equation}\label{exp_dot_S}
% \begin{split}
% &\dS_+(y_1,y_2)=\dS_+(\bet^*,y_2)=b_s^s \du_{1-}(\bet^*,y_2).
% \end{split}
% \end{equation}

At the exit of the nozzle, the total pressure is prescribed by
\begin{equation}
P_+(L_1,y_2)=\bP_+ +\si P_{ex}(y_2).
\end{equation}

Clearly, the total pressure $P$ can also be expressed as a function of $\mlU$. Defining $P = \mlP(\mlU)$, one has 
\begin{equation}\label{def_mlP}
\begin{split}
\mlP(\mlU)&=\bP-\brho \bu C(\bar\mlU)\du_{1}-\frac{\bp +\bka^2\brho^2\bu^2}{(\ga-1)\bS}\dS\\
&+\brho(1+\brho \bka^2 \bM^2)\dB +\bka \brho^2 \bu^2 \dka +O(|\dot\mlU|^2).
\end{split}
\end{equation}

Therefore, the leading order satisfies
\begin{equation*}
\si P_{ex}(y_2)=-\brho \bu C(\bar\mlU_+)\du_{1+}(L_1,y_2)-\frac{\bp_+ +\bka^2\brho^2\bu^2}{(\ga-1)\bS_+}b_s^s \du_{1-}(\bet^*,y_2),
\end{equation*}
at the exit of the nozzle.

It remains to determine $\bet^*$ and $(\du_{1+},\du_{2+},\dS_+)$ via the following boundary value problems:
\begin{equation}\label{BVP_dotS+}
\begin{cases}
\p_{y_1}\dS_+=0,\,\text{in }\dN_+,\\  
\dS_{+}(\bet^*,y_2)=b_s^s \du_{1-}(\bet^*,y_2):=\dg_s(y_2),\,\text{on }\dGa_s,
\end{cases}
\end{equation}
and
\begin{equation}\label{BVP_dot_u1_u2}
\begin{cases}
(1-\bM_+^2)\p_{y_1}\du_{1+} +\brho \bu \p_{y_2} \du_{2+}=0,\,\text{in }\dN_+,\\
(1-\brho_+ \bka^2)\p_{y_1}\du_{2+}-\brho \bu C(\bar\mlU_+) \p_{y_2}\du_{1+}=\dot{f}_2,\,\text{in }\dN_+,\\
%\mlA_1(\bar\mlU_+)\p_{y_1}\dot\u_+ + \mlA_2(\bar\mlU_+) \p_{y_2} \dot\u_+=\dot\mlF,\,\text{in }\dN_+,\\
\du_{1+}(\bet^*,y_2)=b_u^s \du_{1-}(\bet^*,y_2):=\dg_1(y_2),\,\text{on }\dGa_s,\\
\du_{1+}(L_1,y_2)=b_u^1\du_{1-}(\bet^*,y_2)-\si \frac{P_{ex}(y_2)}{\brho \bu C(\bar\mlU_+)}:=\dg_3(y_2),\,\text{on }\dGa_1,\\
\du_{2+}(y_1,\i)=\i \si \bu_+ f'(y_1):=\dg_{2\i+2}(y_1),\,\text{on }\dW_{\i+},
\end{cases}
\end{equation}
with $b_u^1$ given by
\begin{equation}
\begin{split}
b_u^1&=-\frac{(\bp_+ +\bka^2\brho^2\bu^2)}{\brho \bu C(\bar\mlU_+)} \frac{\bM_-^2-1}{\bp_+\bu_-}[\bp].
\end{split}
\end{equation}

% The boundary conditions for the system \eqref{eq_dot_U+} are specified by 
% \begin{equation}\label{bdy_dot_U+}
% \begin{cases}
% \dth_+=\i\si\dot{\Th}:=\i\si\Th(y_1),\,\text{on}\,\dW_{\i+},\\
% \dot\mlV_+ =\dot{\bf g}:=\bar\mlB_{s+}^{-1}\bar\mlB_{s-}\dot\mlV_-,\,\text{on}\,\dGa_{s},\\
% \dP_+=\si\dP_{ex}(y_2),\,\text{on}\,\dGa_1.
% \end{cases}
% \end{equation}

% \begin{equation}\label{bd_U+wall}
% \dth_+=\i \si\Th_{\i+}(y_1),\,\text{on}\,\dW_{\i+},
% \end{equation}
% on the wall, where $\Th_{\i+}(y_1)=\i\,\si \Th(y_1)$ is defined for $\bet^*\leq y_1\leq L_1 $, while
% \begin{equation}\label{bd_P_exit}
% \dP_+=\dP_{ex}(y_2),\,\text{on}\,\dGa_1.
% \end{equation}

We first establish the solvability condition for the problem \eqref{BVP_dot_u1_u2}.
\begin{lemma}
Suppose that $\bka$ satisfies \eqref{assump_bar_ka}. Given $\bet^*\in(L_0,L_1)$, there exists a unique solution $(\du_{1+},\du_{2+})$ to the boundary value problem \eqref{BVP_dot_u1_u2} if and only if 
\begin{equation}\label{comp_condi}
\bK_0\int_0^1 P_{ex}(y_2)\,dy_2=-\bK f(\bet^*)+\bu_+ f(L_1),
\end{equation}
where $\bK_0,\bK$ are defined in \eqref{def_barK0_K}.
%\begin{equation}\label{def_barK0_K}
%\begin{split}
%\bK_0&=\frac{1-\bM_+^2}{\brho^2 \bu^2 C(\bar\mlU_+)}>0,\\
%\bK&=\lrb{\frac{1-\bM_+^2}{C(\bar\mlU_+)}\lrb{\frac{1}{\ga \bM_+^2}+\brho_+ \bka^2} +1} \frac{\bu_+}{\bp_+}[\bp]>0.
%\end{split}
%\end{equation}
\end{lemma}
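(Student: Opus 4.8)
The plan is to prove both implications by reducing the first-order system in \eqref{BVP_dot_u1_u2} to a single second-order elliptic equation via a stream function, and to recognize that the compatibility condition \eqref{comp_condi} is exactly the requirement that this stream function be single-valued on $\p\dN_+$.

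For the necessity, I would integrate the first equation of \eqref{BVP_dot_u1_u2} over the rectangle $\dN_+$. By Fubini's theorem and the fundamental theorem of calculus the integral collapses to boundary terms, into which I substitute the four boundary data $\du_{1+}(\bet^*,\cdot)=\dg_1$, $\du_{1+}(L_1,\cdot)=\dg_3$, $\du_{2+}(\cdot,1)=\si\bu_+ f'$ and $\du_{2+}(\cdot,0)=0$. This produces the flux balance $(1-\bM_+^2)\int_0^1(\dg_3-\dg_1)\,dy_2 + \brho\bu\si\bu_+[f(L_1)-f(\bet^*)]=0$. Inserting the explicit forms of $\dg_1,\dg_3$ and eliminating $\int_0^1\du_{1-}(\bet^*,y_2)\,dy_2$ by the conserved-flux identity \eqref{comp_dot_U-} of Lemma \ref{lem_dot_U-} reduces this to \eqref{comp_condi}, provided the combination $\bu_+ + (1-\bM_+^2)(b_u^1-b_u^s)\bu_-/(1-\bM_-^2)$ is identified with the constant $\bK$ of \eqref{def_barK0_K}.

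For the sufficiency and uniqueness, I would introduce a stream function $\psi$ on $\dN_+$ by $\p_{y_2}\psi=(1-\bM_+^2)\du_{1+}$ and $\p_{y_1}\psi=-\brho\bu\du_{2+}$; the first equation of \eqref{BVP_dot_u1_u2} is precisely the integrability condition making $\psi$ well defined. Substituting these relations into the second equation converts it into the Poisson equation $\frac{1-\brho_+\bka^2}{\brho\bu}\p_{y_1}^2\psi+\frac{\brho\bu C(\bar\mlU_+)}{1-\bM_+^2}\p_{y_2}^2\psi=-\dot{f}_2$, whose two coefficients are strictly positive under \eqref{assump_bar_ka} (since $1-\brho_+\bka^2>0$, $C(\bar\mlU_+)>0$, and $1-\bM_+^2>0$ for the subsonic state), so after rescaling one variable it is a genuine Poisson problem. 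Each of the four boundary conditions in \eqref{BVP_dot_u1_u2} becomes a prescription of the tangential derivative $\p_T\psi$ along the corresponding edge, so integrating them around $\p\dN_+$ determines $\psi|_{\p\dN_+}$ up to an additive constant, the only obstruction to single-valuedness being $\oint_{\p\dN_+}\p_T\psi\,dT=0$; unwinding this contour integral reproduces the same flux balance as in the necessity step, hence \eqref{comp_condi}. Thus, under \eqref{comp_condi} the Dirichlet data for $\psi$ is consistent, the Poisson problem has a unique solution (up to the irrelevant constant), and $(\du_{1+},\du_{2+})$ are recovered by differentiation; they satisfy the prescribed boundary conditions by construction, and uniqueness follows since the difference of two solutions yields a stream function solving the homogeneous Dirichlet problem for the same elliptic operator, which forces it to be constant.

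The main obstacle I anticipate is the algebraic verification that the constant generated by the necessity computation equals the $\bK$ of \eqref{def_barK0_K}; this requires inserting the explicit expressions for $b_u^s$ and $b_u^1$ from \eqref{def_bus_bss} and simplifying with the background relations $\brho_+\bu_+=\brho_-\bu_-=1$, $[\brho\bu^2+\bp]=0$, and $\bc_\pm^2=\ga\bp_\pm/\brho_\pm$. A secondary technical point is the regularity of $\psi$ at the corners of $\dN_+$, where the tangential data prescribed on adjacent edges need not match smoothly; this is where the weighted H\"older norms become relevant, but for the mere existence-uniqueness dichotomy it suffices to solve the Poisson problem in the appropriate weighted space and read off $(\du_{1+},\du_{2+})$.
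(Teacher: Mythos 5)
Your proposal is correct and follows essentially the same route as the paper: the necessity is obtained by integrating the divergence-form first equation over $\dN_+$, substituting the boundary data, and eliminating $\int_0^1\du_{1-}(\bet^*,y_2)\,dy_2$ via \eqref{comp_dot_U-}, while the sufficiency via the stream function $\phi$ with $\p_{y_2}\phi=(1-\bM_+^2)\du_{1+}$, $\p_{y_1}\phi=-\brho\bu\du_{2+}$ and the resulting uniformly elliptic second-order problem is exactly what the paper carries out in Lemma \ref{lem_p_th_dot+}. The algebraic identification of your flux-balance coefficient with $\bK$ checks out using $\brho_\pm\bu_\pm=1$ and $\bc_\pm^2=\ga\bp_\pm/\brho_\pm$, matching \eqref{def_barK0_K}.
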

\begin{proof}
Integrating the first equation in \eqref{BVP_dot_u1_u2} over $\dN_+$ results in the following necessary solvability condition:
\begin{equation*}
\begin{split}
\int_{\bet^*}^{L_1}\int_{0}^{1}\p_{y_2}\du_{2+}(y_1,y_2)\,dy_1dy_2
=-\frac{1-\bM_+^2}{\brho \bu} \int_{\bet^*}^{L_1}\int_{0}^{1}\p_{y_1}\du_{1+}(y_1,y_2)\,dy_1dy_2.
\end{split}
\end{equation*}

Integrating by parts yields
\begin{equation}
\begin{split}
\frac{1-\bM_+^2}{\brho \bu}
\int_0^1(\dg_1-\dg_3)(y_2)dy_2=\int_{\bet*}^{L_1} \dg_4(y_1) \,dy_1
=\si \bu_+\int_{\bet*}^{L_1} f'(\tau)d\tau.
\end{split}
\end{equation}

By applying the boundary condition of  $\du_{1+}$ on $\dGa_s$ and $\dGa_1$, we have
\begin{equation*}
\begin{split}
&\frac{1-\bM_+^2}{\brho \bu}\int_0^1 \du_{1+}(\bet*,y_2)dy_2
=\frac{1-\bM_+^2}{\brho \bu} b_u^s\int_0^1  \du_{1-}(\bet^*,y_2)\,dy_2\\
=&-\si \frac{\bM_+^2}{\bM_-^2} \bu_-f(\bet^*)=-\si \bu_+\frac{\bp_-}{\bp_+} f(\bet^*),
\end{split}
\end{equation*}
and 
\begin{equation*}
\begin{split}
&\frac{1-\bM_+^2}{\brho \bu}\int_0^1 \du_{1+}(L_1,y_2)dy_2 \\
=&\frac{1-\bM_+^2}{\brho \bu} b_u^1\int_0^1 \du_{1-}(\bet^*,y_2)dy_2 
-\frac{1-\bM_+^2}{\brho^2 \bu^2 C(\bar\mlU_+)}\si \int_0^1 P_{ex}(y_2)\,dy_2 \\
=&-(1-\bM_+^2)\frac{(\bp_+ +\bka^2\brho^2\bu^2)}{\brho \bu C(\bar\mlU_+)} \frac{[\bp]}{\bp_+}\si f(\bet^*)
-\frac{1-\bM_+^2}{\brho^2 \bu^2 C(\bar\mlU_+)}\si\int_0^1 P_{ex}(y_2)\,dy_2,
\end{split}
\end{equation*}
where we have used \eqref{comp_dot_U-}.

Consequently, the solvability condition transforms into
\begin{equation*}
\begin{split}
\frac{1-\bM_+^2}{\brho^2 \bu^2 C(\bar\mlU_+)}\int_0^1 P_{ex}(y_2)\,dy_2 =&-(1-\bM_+^2)\frac{(\bp_+ +\bka^2\brho^2\bu^2)}{\brho \bu C(\bar\mlU_+)} \frac{[\bp]}{\bp_+} f(\bet^*)\\
&+ \bu_+\frac{\bp_-}{\bp_+} f(\bet^*)+ \bu_+(f(L_1)-f(\bet^*)),
\end{split}
\end{equation*}
which coincides with \eqref{comp_condi}.

% \begin{equation*}
% -\frac{1-\bM_+^2}{C(\bar\mlU_+)}\lrb{\frac{1}{\ga \bM_+^2}+\brho_+ \bka^2} \bu_+\frac{[\bp]}{\bp_+} f(\bet^*)- \bu_+\frac{[\bp]}{\bp_+} f(\bet^*)+ \bu_+f(L_1)
% \end{equation*}
\end{proof}

Observing that $\bK_0\neq0$, we can define 
\begin{equation}
F(\eta)=-\frac{\bK}{\bK_0}f(\eta)+\frac{\bu_+}{\bK_0}f(L_1)
\end{equation}
and
\begin{equation*}
\underline{F}:=\inf_{\eta\in(L_0,L_1)}F(\eta),\,\overline{F}:=\sup_{\eta\in(L_0,L_1)}F(\eta).
\end{equation*}

Since $F(\eta)$ is continuous in $\eta$, the intermediate value theorem ensures that for any $P_{ex}(y_2)$ with
\begin{equation}\label{compa_Pe}
\underline{F}\leq \int_0^1 P_{ex}(y_2)dy_2\leq \overline{F},
\end{equation}
there exists $\bet^*$ such that
\begin{equation}\label{eq_eta*}
F(\bet^*)=\int_0^1\dP_{ex}(y_2)dy_2.
\end{equation}

Moreover, if the nozzle is expanding (contracting), which corresponds to $f'(\eta)>0 \,(<0)$ for all $\eta \in (L_0, L_1)$, it follows that
\begin{equation}
F'(\eta) = -\frac{\bK}{\bK_0} f'(\eta) \neq 0, \quad \forall \eta \in (L_0, L_1).
\end{equation}

The monotonicity of $F(\eta)$ therefore guarantees the existence of a unique solution $\bet^*$ to \eqref{eq_eta*}. Consequently, the initial approximate position $\bet^*$ is uniquely determined in this case. 

We establish the following well-posedness result for problem \eqref{BVP_dot_u1_u2} without requiring a priori information of the free boundary position $y_1=\bet^*$.
\begin{lemma}\label{lem_p_th_dot+}
Assume that \eqref{assump_bar_ka} and \eqref{compa_Pe} hold. If $f'(\eta) \neq 0$ for all $\eta \in (L_0, L_1)$, then there exists a unique solution $\bet^* \in (L_0, L_1)$ to \eqref{eq_eta*}.
Moreover, the boundary value problem \eqref{BVP_dot_u1_u2} admits a unique solution $(\du_{1+},\du_{2+})$ satisfying 
\begin{equation}\label{est_p_th_dot}
\begin{split}
\|(\du_{1+},\du_{2+})\|_{1,\al;\dN_+}^{(-\al)}&\leq C\left(\si\| f'\|_{1,\al;\dW_{1+}}
+\si\| P_{ex}\|_{1,\al;\dGa_1}^{(-\al)}+\|\du_{1-}\|_{2,\al;\dN_-}
\right)\\
&\leq C(\bar\mlU_\pm,L_0,L_1)\si,
\end{split}
\end{equation}
with $\al\in(0,1)$. The constant $C$ depends on $L_0,L_1,\bar\mlU_\pm$ and the boundary datum.
\end{lemma}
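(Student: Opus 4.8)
The plan is to split the statement into two parts: determining $\bet^*$, and then solving \eqref{BVP_dot_u1_u2} for this fixed location. For the first part, observe that $F(\eta)=-\frac{\bK}{\bK_0}f(\eta)+\frac{\bu_+}{\bK_0}f(L_1)$ is continuous on $[L_0,L_1]$, so hypothesis \eqref{compa_Pe} together with the intermediate value theorem produces at least one $\bet^*\in(L_0,L_1)$ with $F(\bet^*)=\int_0^1 P_{ex}(y_2)\,dy_2$, i.e. \eqref{eq_eta*}. Since $f'(\eta)\neq 0$ throughout $(L_0,L_1)$, we have $F'(\eta)=-\frac{\bK}{\bK_0}f'(\eta)\neq 0$, so $F$ is strictly monotone and $\bet^*$ is unique. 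By the preceding lemma, \eqref{eq_eta*} is equivalent to the compatibility condition \eqref{comp_condi}, which is exactly what makes \eqref{BVP_dot_u1_u2} solvable; this is the link exploited below.

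With $\bet^*$ fixed, I would reduce the constant-coefficient first-order system to a scalar elliptic equation. Since \eqref{BVP_dotS+} forces $\dS_+=\dg_s(y_2)$ to be independent of $y_1$, the source $\dot{f}_2$ is a known function of $y_2$ alone. The first equation in \eqref{BVP_dot_u1_u2} states that $\big((1-\bM_+^2)\du_{1+},\,\brho\bu\,\du_{2+}\big)$ is divergence free, so one may introduce a potential $\Psi$ with $\p_{y_2}\Psi=(1-\bM_+^2)\du_{1+}$ and $\p_{y_1}\Psi=-\brho\bu\,\du_{2+}$. Substituting into the second equation gives the uniformly elliptic equation
\begin{equation*}
\frac{1-\brho_+\bka^2}{\brho\bu}\,\p_{y_1}^2\Psi+\frac{\brho\bu\,C(\bar\mlU_+)}{1-\bM_+^2}\,\p_{y_2}^2\Psi=-\dot{f}_2,
\end{equation*}
whose ellipticity is guaranteed precisely by \eqref{assump_bar_ka}, which yields $1-\brho_+\bka^2>0$ and $C(\bar\mlU_+)>0$, together with $1-\bM_+^2>0$ in the subsonic region. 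The prescribed normal velocity $\du_{2+}$ on $\dW_{\i+}$ and the prescribed $\du_{1+}$ on $\dGa_s,\dGa_1$ each fix a tangential derivative of $\Psi$ along the corresponding edge, hence, after integration, Dirichlet data for $\Psi$ on all of $\p\dN_+$. The single-valuedness of these data around the rectangle---equivalently the vanishing of the total boundary flux---is exactly \eqref{comp_condi}, secured by the choice of $\bet^*$, so the Dirichlet problem for $\Psi$ is solvable and unique up to an additive constant; differentiating recovers a unique $(\du_{1+},\du_{2+})$. Equivalently, eliminating $\du_{2+}$ yields a second-order equation for $\du_{1+}$ with Dirichlet data on $\dGa_s,\dGa_1$ and Neumann data on the walls read off from the second momentum equation.

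The main obstacle is the estimate in the weighted norm $\|\cdot\|_{1,\al;\dN_+}^{(-\al)}$. The boundary operators change character at the four corners where $\dW_{\i+}$ meets $\dGa_s$ or $\dGa_1$, so the solution fails to be $C^{1,\al}$ up to these corners and $\nabla\du_{1+}$ can blow up at the order $d_x^{\al-1}$ there; this is precisely the loss that the weight $(-\al)$ is designed to absorb. I would therefore combine interior and flat-boundary Schauder estimates away from the corners with a local weighted (Kondrat'ev-type) analysis near each corner---via the explicit corner singularity expansion for the constant-coefficient operator on a right-angle wedge, or a suitable barrier argument---to bound $\|(\du_{1+},\du_{2+})\|_{1,\al;\dN_+}^{(-\al)}$ by the data. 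Linearity then makes the bound scale linearly in the data, and inserting the supersonic estimate \eqref{est_dot_U-} to control $\|\du_{1-}\|_{2,\al;\dN_-}$, together with the hypotheses on $f'$ and $P_{ex}$, yields the stated $C(\bar\mlU_\pm,L_0,L_1)\,\si$ bound.
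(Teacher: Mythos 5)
Your proposal is correct and follows essentially the same route as the paper: existence and uniqueness of $\bet^*$ via the intermediate value theorem and the strict monotonicity of $F$ forced by $f'\neq 0$, then introduction of the potential $\phi$ from the divergence-form first equation, reduction to the same uniformly elliptic second-order equation (ellipticity from \eqref{assump_bar_ka} and subsonicity), with the tangential-derivative boundary data whose single-valuedness is exactly the compatibility condition \eqref{comp_condi}, and finally the weighted Schauder estimate \eqref{est_p_th_dot}. Your additional remarks on the corner singularities and the role of the weight $(-\al)$ are a more explicit account of what the paper delegates to the standard elliptic theory it cites.
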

\begin{proof}
The first equation in \eqref{BVP_dot_u1_u2} shows that there exists a function $\phi(y_1,y_2)$ such that
\begin{equation}
\p_{y_1}\phi=-\brho \bu \du_{2+},\,\p_{y_2}\phi=(1-\bM_+^2)\du_{1+},\,\phi(L_0,0)=0.
\end{equation}

Then, the system \eqref{BVP_dot_u1_u2} is changed to the elliptic equation of second order in terms of $\phi$:
\begin{equation}\label{eq_phi}
-\frac{1-\brho_+\bka^2}{\brho \bu}\p_{y_1}^2\phi-\frac{\brho \bu C(\bar\mlU_+)}{1-\bM_+^2}\p_{y_2}^2\phi
=\frac{\brho_+ \bc_+^2 b_s^s}{(\ga-1)\bS_+}\p_{y_2}\du_{1-}(\bet^*,y_2),
\end{equation}
with boundary conditions:
\begin{equation}\label{bc_phi}
\begin{split}
&\p_{y_1}\phi(y_1,\i)=-\brho \bu \dg_{2\i+2}(y_1),\\
&\p_{y_2}\phi(\bet^*,y_2)=(1-\bM_+^2)\dg_1(y_2),\\
&\p_{y_2}\phi(L_1,y_2)=(1-\bM_+^2)\dg_3(y_2).
\end{split}
\end{equation}

Since both $1-\brho_+\bka^2,\,C(\bar\mlU_+)$ and $1-\bM_+^2$ are positive, the standard theory \cite{GT2001} of second-order elliptic equations implies that there exists a unique solution $\phi(y_1,y_2)$ to the boundary value problem \eqref{eq_phi}-\eqref{bc_phi} satisfying
\begin{equation}
\begin{split}
\|\phi\|_{2,\al;\dN_+}^{(-1-\al)}&\leq C\left( \|\dg_4(y_1)\|_{1,\al;\dW_{1+}}+\|\dg_1\|_{1,\al;\dGa_s}^{(-\al)}+\|\dg_3\|_{1,\al;\dGa_1}^{(-\al)}\right).
\end{split}
\end{equation}

This inequality, combined with the estimate \eqref{est_dot_U-}, immediately yields the estimate \eqref{est_p_th_dot}. The proof is complete. 
\end{proof}

Collecting all the above results, we state the main result of this section below.
\begin{theorem}\label{thm_dot_U} Under the assumptions in Lemma \ref{lem_p_th_dot+}, there exists a unique solution $(\dot\mlU_-,\dot\mlU_+;\deta',\bet^*)$ satisfying the following properties:
\begin{enumerate}

\item The position of the free surface $y_1=\bet^*$ is determined by \eqref{comp_condi}.

\item In the supersonic domain, $\dot\mlU_-=(\dot\u_-,\dS_-,0,0)$ solves the linearized MHD equations \eqref{eq_dot_U-} with the boundary conditions \eqref{bd_U-_Ga0}-\eqref{bd_U-_wall}.

\item Behind the free boundary $y_1=\bet^*$, $\dot\mlU_+=(\dot\u_+,\dS_+,0,0)$ satisfies the linear boundary value problem \eqref{BVP_dotS+} and \eqref{BVP_dot_u1_u2}. 

\item The shape of the shock front $\deta'(y_2)$ is then determined by the leading term of $\mlG_0$, that is
\begin{equation*}
\deta'(y_2)=\frac{1}{[\bp]}\left( \beta_0^+\cdot\dot\mlU_+(\bet^*,y_2)+\dg_0\right),
\end{equation*}
where $\dg_0=\beta_0^-\cdot\dot\mlU_-(\bet^*,y_2)$.

\end{enumerate}

Moreover, the solution $(\dot\mlU_\pm;\deta')$ satisfies the estimates
\begin{equation}\label{est_dotU+-_deta}
\|\dot\mlU_-\|_{2,\al;\dN_-}+\|\dot\mlU_+\|_{1,\al;\dN_+}^{(-\al)}+\|\dot\eta'\|_{1,\al;\dN_+}^{(-\al)}\leq 
C\si \left( \| f'\|_{2,\al;\dW_{1}}+\| P_{ex}\|_{2,\al;\dGa_1} \right),
\end{equation}
where $C$ depends only on $L_0,L_1$ and $\bU_\pm$.
\end{theorem}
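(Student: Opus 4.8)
The plan is to assemble the initial approximation by solving the subproblems in a specific logical order, since the free-boundary location $\bet^*$ and the downstream state $\dot\mlU_+$ are coupled only through a single scalar solvability constraint. First I would invoke Lemma~\ref{lem_dot_U-} to obtain the upstream fluctuation $\dot\mlU_-$ in $\dN_-$ as the unique solution of the constant-coefficient hyperbolic system \eqref{eq_dot_U-} with the data \eqref{bd_U-_Ga0}--\eqref{bd_U-_wall}. This simultaneously delivers the decoupling $\dS_-=\dB_-=\dka_-=0$ in \eqref{qP_sk=0}, the bound \eqref{est_dot_U-}, and the integral identity \eqref{comp_dot_U-}, which ties the vertical mean of $\du_{1-}$ to the wall profile $f$. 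Crucially, $\dot\mlU_-$ is determined independently of the shock location, so its trace $\du_{1-}(\bet^*,\cdot)$ is available as a function of the parameter $\bet^*$.

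Next I would pin down $\bet^*$. Inserting \eqref{comp_dot_U-} into the necessary solvability condition for the downstream problem---obtained by integrating the divergence-form first equation of \eqref{BVP_dot_u1_u2} over $\dN_+$---collapses that condition to the scalar relation \eqref{comp_condi}, i.e.\ $F(\bet^*)=\int_0^1 P_{ex}(y_2)\,dy_2$. Under \eqref{compa_Pe} the intermediate value theorem yields a root, and the hypothesis $f'(\eta)\neq 0$ makes $F'(\eta)=-(\bK/\bK_0)f'(\eta)$ nonvanishing, so $F$ is strictly monotone and the root $\bet^*$ is unique; this is precisely Lemma~\ref{lem_p_th_dot+}.

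With $\bet^*$ frozen, I would read off the Cauchy data for the downstream state from the linearized Rankine--Hugoniot relations: Lemma~\ref{RH_bd_condi} gives the traces in \eqref{bd_dot_u1_S}, while \eqref{rh4_Lag} together with the transport structure forces $\dB_+\equiv\dka_+\equiv 0$ as in \eqref{exp_dotB_ka}. The entropy $\dS_+$ is then propagated across $\dN_+$ by the trivial transport problem \eqref{BVP_dotS+}, supplying the forcing $\dot{f}_2$ in \eqref{BVP_dot_u1_u2}. Because the solvability constraint now holds by construction, Lemma~\ref{lem_p_th_dot+} produces a unique $(\du_{1+},\du_{2+})$ with the weighted estimate \eqref{est_p_th_dot}. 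Finally I would recover the shock slope by evaluating the leading term of $G_0$ in \eqref{lin_G0} at $y_1=\bet^*$ and solving for $\deta'$; its weighted bound follows from the trace of $\dot\mlU_+$ on $\dGa_s$, and combining \eqref{est_dot_U-}, \eqref{est_p_th_dot} with this bound gives \eqref{est_dotU+-_deta}.

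The main obstacle is the logical sequencing rather than any isolated estimate: the downstream elliptic problem \eqref{BVP_dot_u1_u2} is solvable \emph{if and only if} \eqref{comp_condi} holds, so one cannot first pose the boundary value problem and then search for $\bet^*$---the scalar constraint must determine $\bet^*$ before the elliptic solver is applied. The only genuine regularity difficulty is the loss of smoothness at the corner where $\dGa_s$ meets the upper wall, which is exactly why the downstream quantities are measured in the weighted Hölder norm $\|\cdot\|_{1,\al;\dN_+}^{(-\al)}$; since this is already built into Lemma~\ref{lem_p_th_dot+}, at the level of the initial approximation it amounts to careful bookkeeping rather than a new argument.
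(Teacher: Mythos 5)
Your proposal is correct and follows essentially the same route as the paper: Section 3 proves Theorem \ref{thm_dot_U} precisely by assembling Lemma \ref{lem_dot_U-} for the upstream state, the solvability identity \eqref{comp_condi} (via integrating the divergence-form equation and using \eqref{comp_dot_U-}) together with the monotonicity of $F$ to fix $\bet^*$ uniquely, Lemma \ref{RH_bd_condi} for the interface data, Lemma \ref{lem_p_th_dot+} for the downstream elliptic problem in the weighted H\"older space, and finally the explicit formula for $\deta'$ from the leading term of $G_0$. Your observation about the ordering --- that the scalar constraint must determine $\bet^*$ before the elliptic solver is applied --- accurately reflects the paper's logic.
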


\section{The nonlinear shock problem}\label{sec4}
Building upon the solution $(\dot\mlU_-,\dot\mlU_+;\deta',\bet^*)$ established in Theorem \ref{thm_dot_U}, we develop an iteration scheme in this section to resolve the nonlinear shock problem. Notably, the free boundary $\dGa_s$ is adopted as the initial approximation for the shock front location.

The MHD equations \eqref{defcurl_sys_y} can be rewritten as
\begin{equation}\label{MHD_A_U}
\begin{cases}
\mlA_1(\mlU)\,\p_{y_1}\u+\mlA_2(\mlU)\p_{y_2}\u=\f(\mlU),\\
\p_{y_1}S=\p_{y_1}B=\p_{y_1}\ka=0.
\end{cases}
\end{equation}

We seek shock solutions $(\mlU_\pm;\eta(y_2))$ to the MHD equation \eqref{MHD_A_U} with the following form:
\begin{equation}
\mlU_\pm=\bar\mlU_\pm+\dd\mlU_\pm,\,\dd\mlU_\pm:=(\ddu_{1\pm},\ddu_{2\pm},\ddS_\pm,\ddB_\pm,\ddka_\pm).
\end{equation}

\subsection{The supersonic solution in \texorpdfstring{$N^y$}{}}
Starting from the boundary conditions 
\begin{equation}
(S_-,B_-,\ka_-)(L_0,y_2) = (\bS_-,\bB,\bka)
\end{equation}
at the entrance, the second equation in \eqref{MHD_A_U} implies that
\begin{equation}
(S_-,B_-,\ka_-)(y_1,y_2) \equiv (\bS_-,\bB,\bka).
\end{equation}

We need only consider the boundary value problem for $\u_-$:
\begin{equation}\label{BVP_dd_U-}
\begin{cases}
\mlA_1(\mlU_-)\,\p_{y_1}\u_-+\mlA_2(\mlU_-)\p_{y_2}\u_-=0,\,\text{in}\,N^y,\\
\u_-=\bar\u_-,\,\text{on}\,\Ga_0^y,\\
u_{2-}-\i \si f' u_{1-}=0,\,\text{on}\,W_{\i}^y.
\end{cases}
\end{equation}

The boundary value problem \eqref{BVP_dd_U-} constitutes a hyperbolic system in the supersonic regime. Consequently, applying the established theory for quasi-linear hyperbolic systems yields the following existence and uniqueness results.

\begin{theorem}
Under the assumption that condition \eqref{comp_nozzle} holds, there exists a positive constant $\si_1<1$ depending on the background solution $\bar\mlU_-$ and the length of the nozzle such that for any $0<\si<\si_1$, the initial boundary value problem \eqref{BVP_dd_U-} admits a unique solution satisfying the estimate
\begin{equation}\label{est_dd_bar_U-}
\|\u_- - \bar\u_-\|_{2,\al;\overline{N^y}}\leq C(\bar\mlU_-,L_0,L_1)\si.
\end{equation}
Furthermore, when compared with the supersonic solution $\dot\mlU_-$ constructed in Lemma \ref{lem_dot_U-}, there holds
\begin{equation}\label{est_ddU_du-}
\|\dd\u_- - \dot\u_-\|_{1,\al;\overline{N^y}}\leq C(\bar\mlU_-,L_0,L_1)\si^2.
\end{equation}
\end{theorem}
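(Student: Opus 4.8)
The plan is to treat \eqref{BVP_dd_U-} as a quasilinear, strictly hyperbolic boundary value problem for the velocity $\u_-=(u_{1-},u_{2-})$ in the rectangle $N^y$, with $y_1$ playing the role of the evolution direction. Because the transport equations have already frozen $(S_-,B_-,\ka_-)$ to their constant entrance values $(\bS_-,\bB,\bka)$, the source $\f(\mlU_-)$ vanishes and only the $2\times2$ system for $\u_-$ remains. Since the background $\bar\mlU_-$ is a constant supersonic state, the eigenvalues $\lam_\pm(\bar\mlU_-)=\pm\frac{1}{\brho_-\bu_-}\sqrt{(1-\brho_-\bka^2)(\bM_-^2-1)/C(\bar\mlU_-)}$ are real with opposite signs by the super-Alfv\'enic assumption \eqref{assump_bar_ka} together with $\bM_-^2>1$; hence for $\u_-$ in a small neighborhood of $\bar\u_-$ the matrices $\mlA_1,\mlA_2$ keep the system strictly hyperbolic in the $y_1$-direction. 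The entrance $\Ga_0^y$ is non-characteristic and carries full Cauchy data $\u_-=\bar\u_-$, while each wall carries exactly one slip condition, matching the single incoming characteristic there, so the problem has the correct data count.

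First I would set up the standard linearization-plus-fixed-point iteration. Given an approximation $\mlU_-^{(n)}$ in the ball $\mlB_\si:=\{\,\|\u-\bar\u_-\|_{2,\al;\overline{N^y}}\le C_*\si\,\}$, freeze the coefficients and solve the linear hyperbolic problem
\[
\mlA_1(\mlU_-^{(n)})\p_{y_1}\u^{(n+1)}+\mlA_2(\mlU_-^{(n)})\p_{y_2}\u^{(n+1)}=0
\]
with the same boundary conditions, by diagonalizing along the two characteristic families issuing from $\Ga_0^y$, integrating along them and reflecting off the walls. The resulting $C^{2,\al}$ energy estimate gives $\|\u^{(n+1)}-\bar\u_-\|_{2,\al}\le C\si$, so $\mlB_\si$ is invariant once $C_*>C$. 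Then I would prove contraction in the lower norm $\|\cdot\|_{1,\al}$: the difference of two iterates solves the same type of linear hyperbolic system with source and boundary terms controlled by $\si$ times the difference of the inputs, so for $\si$ small the map contracts on $\mlB_\si$ in the $C^{1,\al}$ topology. The unique fixed point is the desired solution and satisfies \eqref{est_dd_bar_U-}.

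The main obstacle is regularity up to the corners of $N^y$, in particular at the point where $\Ga_0^y$ meets the curved wall $W_1^y$: there the Cauchy datum forces $u_{2-}=0$ while the slip condition forces $u_{2-}=\si f'u_{1-}$, so consistency already requires $f'(L_0)=0$, and attaining full $C^{2,\al}$ regularity up to that corner demands matching of derivatives to the order set by the smoothness of the wall. This is precisely what the compatibility conditions \eqref{comp_nozzle}, namely $f^{(j)}(L_0)=0$ for $j=0,1,2,3$, supply; without them the solution would only be Lipschitz near the corner and the $C^{2,\al}$ bound would fail. I expect the bookkeeping of these corner compatibilities, rather than the interior hyperbolic estimates, to be the delicate part.

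For the comparison estimate \eqref{est_ddU_du-}, I would subtract the linear problem \eqref{eq_dot_U-}--\eqref{bd_U-_wall} solved by $\dot\u_-$ from the nonlinear one, writing $\w:=\dd\u_--\dot\u_-$. Since the entropy is constant in both cases, the two right-hand sides vanish and $\w$ solves a linear system whose principal part is the linearization $\mlA_i(\bar\mlU_-)\p_{y_i}$ and whose source collects the remainders $-[\mlA_i(\mlU_-)-\mlA_i(\bar\mlU_-)]\p_{y_i}\dd\u_-$; as $\mlA_i(\mlU_-)-\mlA_i(\bar\mlU_-)=O(\si)$ and $\p_{y_i}\dd\u_-=O(\si)$ in $C^{0,\al}$, this source is $O(\si^2)$. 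On $\Ga_0^y$ one has $\w=0$, and on the wall the nonlinear slip condition minus the linearized one equals $\i\si f'\ddu_{1-}=O(\si^2)$. Applying the same $C^{1,\al}$ linear estimate to $\w$ then yields $\|\w\|_{1,\al}\le C\si^2$, which is \eqref{est_ddU_du-}.
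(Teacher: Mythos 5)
Your proposal is correct and follows essentially the same route as the paper: existence and the $O(\si)$ bound come from the standard theory of quasilinear hyperbolic systems (which the paper simply cites, while you sketch the underlying characteristic/fixed-point iteration and the role of the corner compatibility conditions \eqref{comp_nozzle}), and the quadratic comparison estimate \eqref{est_ddU_du-} is obtained exactly as in the paper by subtracting the linearized problem, observing that the commutator source $\sum_i(\mlA_i(\bar\mlU_-)-\mlA_i(\mlU_-))\p_{y_i}\dd\u_-$ and the wall mismatch $\i\si f'\ddu_{1-}$ are both $O(\si^2)$. No gaps.
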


\begin{proof}
The unique existence of $\u_-\in C^{2,\al}(\overline{N^y})$ is guaranteed by the standard theory of quasi-linear hyperbolic systems, as established in \cite{LiYU}. A direct computation derives

\begin{equation*}
\begin{cases}
\mlA_1(\bar\mlU_-)\,\p_{y_1}(\ddot\u_- - \dot\u_-) + \mlA_2(\bar\mlU_-)\,\p_{y_2}(\ddot\u_- - \dot\u_-)=\mlF_-(\dd\mlU_-),\,\text{in}\,N^y,\\
\dd\u_- - \dot\u_-=0,\,\text{on}\,\Ga_0^y,\\
(\ddu_{2-}-\du_{2-})-\i \si f' (\ddu_{1-}-\du_{1-})=\i \si f' \du_{1-},\,\text{on}\,W_{\i}^y.
\end{cases}
\end{equation*}
where
\begin{equation*}
\mlF_-(\dd\mlU_-)=\sum_{i=1}^2(\mlA_i(\bar\mlU_-)-\mlA_i(\mlU_-))\p_{y_i}\dd\u_-.
\end{equation*}

It follows that
\begin{equation}
\begin{split}
\|\dd\u_- - \dot\u_-\|_{1,\al;\overline{N^y}}&\leq C \lrb{\|\mlF_-(\dd\mlU_-)\|_{1,\al;\overline{N^y}}+ \|\si f' \du_{1-}\|_{1,\al;W_1^y} }\\
&\leq C \lrb{ \|\dd\mlU_-\|_{1,\al;\overline{N^y}}\|\dd\u_-\|_{2,\al;\overline{N^y}}+ \si\| \du_{1-}\|_{1,\al;W_1^y} }\\
&\leq  C(\bar\mlU_-,L_0,L_1)\si^2.
\end{split}
\end{equation}

Combining this with \eqref{est_dd_bar_U-}, we obtain the desired estimate \eqref{est_ddU_du-}, which completes the proof.
\end{proof}

\subsection{The shock front and subsonic solution}
The transonic shock front $\Ga_s^y$ is represented as
\begin{equation}
\Ga_s^y=\{(y_1,y_2):y_1=\eta(y_2)=\bet^*+\ddeta(y_2),\,0<y_2<1\}.
\end{equation}
%with the shock position $\eta(y_2)$ taking the form 
%\begin{equation}
%\eta(y_2)=\bet(y_2)+\ddeta=\bet^*+\ddeta.
%\end{equation}

In the subsonic domain, $(\mlU_+,\eta(y_2))$ solves the following problem:
\begin{equation}\label{BVP_dd_U+_y}
\begin{cases}
\mlA_1(\mlU_+)\,\p_{y_1}\u_+ +\mlA_2(\mlU_+)\p_{y_2}\u_+  =\f(\mlU_+),\,\text{in}\,N_+^y,\\
\p_{y_1}S_+=\p_{y_1}B_+=\p_{y_1}\ka_+=0,\,\text{in}\,N_+^y,\\
[B]=[\ka]=0,\,\text{on}\,\Ga_s^y,\\
G_i(\mlU_+,\mlU_-)=0,\,i=0,1,2,\,\text{on}\,\Ga_s^y,\\
\mlP(\mlU_+)=\bP_+ +\si \tP_{ex}(y_2),\,\text{on}\,\Ga_{1}^y,\\
u_{2+}-\i \si f' u_{1+}=0,\,\text{on}\,W_{\i}^y.
\end{cases}
\end{equation}

The second and third equations in \eqref{BVP_dd_U+_y} immediately yield
\begin{equation}\label{exp_Bka+}
\begin{split}
(B_+,\ka_+)(y_1,y_2)&=(B_+,\ka_+)(\eta(y_2),y_2)=(B_-,\ka_-)(\eta(y_2),y_2)\\
&\equiv(\bB,\bka).
\end{split}
\end{equation}

Then, the system \eqref{BVP_dd_U+_y} is a nonlinear free boundary value problem for $(u_{1+},u_{2_+},S_+;\eta)$, with the shock front $y_1=\eta(y_2)$ becomes part of the solution. To fix the shock profile, we employ a coordinate transformation that straightens the curved shock profile, thereby converting the free boundary value problem into a fixed boundary formulation. 

Specifically, we introduce the following transformation:
\begin{equation}\label{chag-var-fix-bc}
\begin{cases}
z_1=\frac{L_1-\bet^*}{L_1-\eta(y_2)}(y_1-\eta(y_2))+\bet^*,\\
z_2=y_2.
\end{cases}
\end{equation}
And the inverse change  variable gives
\begin{equation}
\begin{cases}
y_1:=Y_1(z_1,z_2;\eta)=z_1+\frac{L_1-z_1}{L_1-\bet^*}(\eta(z_2)-\bet^*),\\
y_2=z_2.
\end{cases}
\end{equation}

Under this transformation, the shock profile $\Ga_s^y$ and the subsonic region $N_+^y$ are mapped to the fixed regions:
\begin{equation}
\begin{split}
\Ga_s^z&=\{(z_1,z_2):z_1=\bet^*,0<z_2<1\},\\\
N_+^z&=\{(z_1,z_2):\bet^*<z_1<L_1,0<z_2<1\},
\end{split}
\end{equation}
with the nozzle wall $W_{\i+}^z=\dW_{\i+}$ and exit $\Ga_1^z=\dGa_1$ remaining invariant.

The unknown function, under transformation \eqref{chag-var-fix-bc}, is expressed in $z$-coordinates as
\begin{equation*}
\begin{split}
\hat\mlU_+:&=(\hu_{1+},\hu_{2+},\hS_+,\hB_+,\hka_+)(z_1,z_2)\\
&=(\tu_{1+},\tu_{2+},\tS_+,\tB_+,\tka_+)(Y_1(z_1,z_2;\eta),z_2),
\end{split}
\end{equation*}

A simple calculation shows that
\begin{equation}\no
\begin{aligned}
\p_{y_1}&=\frac{L_1-\bet^*}{L_1-\eta(z_2)}\p_{z_1},\\
\p_{y_2}&=-\frac{L_1-z_1}{L_1-\eta(z_2)}\eta'(z_2)\p_{z_1}+\p_{z_2}.
\end{aligned}
\end{equation}

% \begin{equation}
% \mlA_1(\mlU_+)\,\frac{L_1-\bet^*}{L_1-\eta(z_2)}\p_{z_1}\mlU_+ 
% -\frac{L_1-z_1}{L_1-\eta(z_2)}\eta'(z_2)\mlA_2\p_{z_1}\mlU_+ +\mlA_2\p_{z_2}\mlU_+=0
% \end{equation}

Then, the free boundary value problem for $(u_{1+},u_{2_+},S_+)(y_1,y_2)$ is changed to
\begin{equation}\label{BVP_ddU+_z}
\begin{cases}
\mlA_1(\hat\mlU_+)\,\p_{z_1}\hat\u_+ +\mlA_2(\hat\mlU_+)\p_{z_2}\hat\u_+  =\mlF(\hat\mlU_+,\eta),\,\text{in}\,N_+^z,\\
\p_{z_1}\hS_+=0,\,\text{in}\,N_+^z,\\
G_i(\hat\mlU_+,\mlU_-(\eta(z_2),z_2))=0,\,i=1,2,\,\text{on}\,\Ga_s^z,\\
\mlP(\hat\mlU_+)=\bP_+ +\si \tP_{ex}(z_2),\,\text{on}\,\Ga_{1}^z,\\
\hu_{2+}-\i \si f'(Y_2(z_1,\i;\eta(\i))) \hu_{1+}=0,\,\text{on}\,W_{\i}^z.
\end{cases}
\end{equation}
where the source term $\mlF$ is given by 
\begin{equation}
\begin{split}
\mlF(\hat\mlU_+,\eta)&=\f(\hat\mlU_+)
-\frac{\eta(z_2)-\bet^*}{L_1-\eta(z_2)}\mlA_1(\hat\mlU_+)\p_{z_1}\hat\mlU_+\\
&+\frac{L_1-z_1}{L_1-\eta(z_2)}\eta'(z_2)\mlA_2(\hat\mlU_+)\p_{z_1}\hat\mlU_+.
\end{split}
\end{equation}

For notational simplicity, we will omit the hat notation $\hat{}$ and the subscript $+$ in $\hat\mlU_+$ throughout the remainder of this section.

\subsection{The linearized system and iteration scheme} We try to address the nonlinear boundary value problem \eqref{BVP_ddU+_z} by the iteration method. The shock front $y_1=\eta(z_2)$ is fully characterized through two components: the shape of the shock profile $\eta'$, derived from the Rankine-Hugoniot conditions, and the terminal value $\eta^*:=\eta(1)$, obtained by fulfilling the compatibility condition. To our purpose, we decompose the shock front as
\begin{equation}\label{sharp_shock}
\eta(z_2)=\bet^*+\dd\eta^*-\int_{z_2}^{1}\ddeta'(\tau)\,d\tau.
\end{equation}
where
\begin{equation*}
\ddeta^*:=\ddeta(1)=\eta^*-\bet^*.
\end{equation*}

It follows from \eqref{exp_Bka+} that
\begin{equation}\label{exp_sharp_B_ka}
(B,\ka)(z_1,z_2)\equiv(\bB,\bka),
\end{equation}
which implies that the fluctuation in the solution vector must be of the form $\dd\mlU=(\ddu_1,\ddu_2,\ddS,0,0)$. Therefore, the iteration process essentially involves the first three components $(\ddu_{1},\ddu_2,\ddS)(z_1,z_2)$.

%We define $\bar\U_+,\dot\U$ and $\dd\U$ analogously to the previous case.  

Given $(\dd\mlU,\ddeta')$ which is close to the initial approximation $(\dot\mlU,\dot\eta')$ constructed in Theorem \ref{thm_dot_U} and the upstream solution obtained in Lemma \ref{lem_dot_U-}, we aim to determine $\dd\eta^*$ and $(\dd\mlU_\sharp;\dd\eta_\sharp')$ by solving the corresponding linear system.

We begin by prescribing the boundary conditions on the fixed shock front $z_1=\bet^*$ as
\begin{equation*}
\bar\mlB_{s+} \dd\mlV_\sharp(\bet^*,z_2)
={\mf{g}}(\dd\mlU(\bet^*,z_2),\dd\mlU_-(\eta(z_2),z_2),\dd\eta'(z_2);\ddeta^*),
\end{equation*}
where $\dd\mlV_\sharp=(\ddu_{\sharp 1},\ddS_\sharp)^\top$ and the vector ${\bf{g}}(z_2)=(g_1(z_2),g_2(z_2))^\top$ is defined by
\begin{equation}\label{def_mfg}
{\mf{g}}(\dd\mlU,\dd\mlU_-,\dd\eta';\ddeta^*)=
\bar\mlB_{s+}\dd\mlV-\mf{G}(\mlU,\mlU_-(\eta(z_2),z_2))
\end{equation}
with $\dd\mlV=(\ddu_1,\ddS)$ and $\mf{G}=(G_1,G_2)$.

Noting that $\bar\mlB_{s+}$ is invertible, the boundary conditions on $\Ga_s^z$ are
\begin{equation}\label{bdy_u1_S_sharp}
\begin{split}
\ddS_\sharp(\bet^*,z_2)&=\ddg_s(z_2;\dd\mlU,\dd\mlU_-,\dd\eta';\ddeta^*),\\
\ddu_{\sharp,1}(\bet^*,z_2)&=\ddg_1(z_2;\dd\mlU,\dd\mlU_-,\dd\eta';\ddeta^*).
\end{split}
\end{equation}
where 
\begin{equation}\label{def_ddgs_ddg1}
(\ddg_s,\ddg_1)^\top=\bar\mlB_{s+}^{-1} \mf{g}.
\end{equation}

For given  $(\dd\mlU;\dd\eta^*)$, we get an update $(\ddu_{\sharp,1},\ddu_{\sharp,2},\ddS_\sharp)$ by solving the following linearized system:
\begin{equation}\label{eq_sharp}
\begin{cases}
(1-\bM_+^2)\p_{z_1}\ddu_{\sharp 1} +\brho \bu \p_{z_2} \ddu_{\sharp 2}=\dd{f}_1(\dd\mlU,\dd\eta';\dd\eta^*),\\
(1-\brho_+ \bka^2)\p_{z_1}\ddu_{\sharp 2}-\brho \bu C(\bar\mlU_+) \p_{z_2}\ddu_{\sharp 1}=\dd{f}_2(\dd\mlU,\dd\eta';\dd\eta^*),\\
\p_{z_1}\ddS_\sharp=0.
\end{cases}
\end{equation}

The entropy $\ddS_\sharp$ is completely determine by its value on $\Ga_s^z$, i.e.,
\begin{equation*}
\ddS_\sharp(z_1,z_2)=\ddg_s(z_2;\dd\mlU,\dd\mlU_-,\dd\eta';\ddeta^*).
\end{equation*}

The high order terms $\dd{f}_1,\dd{f}_2$ are given by
\begin{equation}\label{def_f1}
\begin{split}
\dd{f}_1(\dd\mlU,\dd\eta';\dd\eta^*)=&(M_1^2-\bM_+^2)\p_{z_1}\ddu_{1} +(\rho u_1 -\brho \bu) \p_{z_2} \ddu_{ 2}\\
&+M_1M_2\p_{z_1}u_2+\rho u_2 \p_{z_2}u_1\\
&-\frac{\ddeta^*-\int_{z_2}^{1}\ddeta'(\tau)\,d\tau}{L_1-\eta(z_2)}
\lrb{(1-M_1^2)\p_{z_1}\ddu_{1} -M_1 M_2 \p_{z_1}\ddu_{2}}\\
&+\frac{L_1-z_1}{L_1-\eta(z_2)}\dd\eta' \lrb{-\rho u_2\p_{z_1}\ddu_{1} +\rho u_1 \p_{z_1}\ddu_{2}},
\end{split}
\end{equation}
and
\begin{equation}\label{def_f2}
\begin{split}
\dd{f}_2(\dd\mlU,\dd\eta';\dd\eta^*)=&(\rho \ka^2-\brho_+ \bka^2)\p_{z_1}\ddu_{ 2}-(\brho \bu C(\bar\mlU_+) - \rho u_1C(\mlU))\p_{z_2}\ddu_{ 1}\\
&-\rho \ka^2 M_1 M_2\p_{z_1}u_1-\rho \ka^2 M_2^2\p_{z_1}u_2+\rho u_2 C(\mlU)\p_{z_2}u_2\\
&-\frac{\ddeta^*-\int_{z_2}^{1}\ddeta'(\tau)\,d\tau}{L_1-\eta(z_2)}
\lrb{\rho \ka^2 M_1 M_2\p_{z_1}\ddu_{1} +(C(\mlU)-\rho\ka^2 M_1^2) \p_{z_1}\ddu_{2}}\\
&-\frac{L_1-z_1}{L_1-\eta(z_2)}\dd\eta' C(\mlU) \lrb{\rho u_1\p_{z_1}\ddu_{1} +\rho u_2 \p_{z_1}\ddu_{2}}\\
&+\rho^\ga \frac{1+\ga \rho \ka^2 |{\bf M}|^2}{\ga-1}\p_{z_2}\ddg_s(z_2;\dd\mlU,\dd\mlU_-,\dd\eta';\ddeta^*).
\end{split}
\end{equation}

The boundary conditions on the  walls $W_{\i\pm}^z$ are given by
\begin{equation}\no
\ddu_{\sharp,2}=\i \si f'\lrb{z_1+\frac{L_1-z_1}{L_1-\bet^*}\ddeta^*} (\bu_++\ddu_{1}):=\ddg_{2\i+2}(z_1;\dd\mlU;\ddeta^*),\,\,\text{on}\,W_{\i+}^z. 
\end{equation}

Restricting \eqref{def_mlP} on $z_1=L_1$, we obtain 
\begin{equation}
\si P_{ex}(X_2(L_1,z_2;\dd\mlU))=-\brho \bu C(\bar\mlU_+)\ddu_{\sharp,1}-\frac{\bp_+ + \bka^2\brho^2 \bu^2}{(\ga-1)\bS_+}\ddS_\sharp+R_p(z_2;\dd\mlU),
\end{equation}
where
\begin{equation*}
X_2(L_1,z_2;\dd\mlU)=\int_{0}^{z_2}\frac{1}{(\rho u_1)(L_1,\tau)}d\tau,
\end{equation*}
and $R_p(z_2;\dd\mlU)$ is an error of second order defined as
\begin{equation*}
R_p(z_2;\dd\mlU)=\mlP(\mlU)-\bP_+ + \brho \bu C(\bar\mlU_+)\ddu_{1}+\frac{\bp_+ + \bka^2\brho^2 \bu^2}{(\ga-1)\bS_+}\ddS.
\end{equation*}

The boundary condition at the exit is thus prescribed as 
\begin{equation}
\ddu_{\sharp,1}(L_1,z_2)=\ddg_3(z_2;\dd\mlU,\dd\mlU_-,\dd\eta';\ddeta^*) ,
\end{equation}
where
\begin{equation}\label{def_ddg3}
\begin{split}
\ddg_3(z_2;\dd\mlU,\dd\mlU_-,\dd\eta';\ddeta^*)=&-\si \frac{P_{ex}(X_2(L_1,z_2;\dd\mlU))}{\brho\bu C(\bar\mlU_+)}\\
&-\frac{\bp_+ + \bka^2\brho^2 \bu^2}{(\ga-1)\brho\bu C(\bar\mlU_+)\bS_+}\ddg_s(z_2)+\frac{R_p(z_2;\dd\mlU)}{\brho\bu C(\bar\mlU_+)}.
\end{split}
\end{equation}

To summarize, the entropy $\ddS_\sharp$ is directly given by 
\begin{equation}\label{exp_ddS_sharp}
\ddS_\sharp(z_1,z_2)=\ddg_s(z_2;\dd\mlU,\dd\mlU_-,\dd\eta';\ddeta^*),
\end{equation}
while the velocity field $\dd\u_\sharp=(\ddu_{\sharp,1},\ddu_{\sharp,2})$ will be determined by solving the following boundary value problem:
\begin{equation}\label{BVP_dd_sharp_u1u2}
\begin{cases}
\mlA_1(\bar\mlU_+)\p_{z_1}\dd\u_\sharp +\mlA_2(\bar\mlU_+)\p_{z_2} \dd\u_\sharp=\dd\mlF(\dd\mlU,\dd\eta';\dd\eta^*),\,\text{in }N_+^z,\\
% (1-\bM_+^2)\p_{z_1}\ddu_{\sharp 1} +\brho \bu \p_{z_2} \ddu_{\sharp 2}=\dd{f}_1(\dd\mlU,\dd\eta';\dd\eta^*),\,\text{in }N_+^z,\\
% (1-\brho_+ \bka^2)\p_{z_1}\ddu_{\sharp 2}-\brho \bu C(\bar\mlU_+) \p_{z_2}\ddu_{\sharp 1}=\dd{f}_2(\dd\mlU,\dd\eta';\dd\eta^*),\,\text{in }N_+^z,\\
\ddu_{\sharp,1}(\bet^*,z_2)=\ddg_1(z_2;\dd\mlU,\dd\mlU_-,\dd\eta';\ddeta^*),\,\text{on }\Ga_s^z,\\
\ddu_{\sharp,1}(L_1,z_2)=\ddg_3(z_2;\dd\mlU,\dd\mlU_-,\dd\eta';\ddeta^*),\,\text{on }\Ga_1^z,\\
\ddu_{\sharp,2}(z_1,\i)=\ddg_{2\i+2}(z_1;\dd\mlU;\ddeta^*),\,\text{on }W_{\i+}^z,
\end{cases}
\end{equation}
and the shape of the shock front $\ddeta'_\sharp$ is determined by 
\begin{equation}\label{eq_sharp_dd_eta}
\ddeta'_\sharp=\frac{1}{[\bp]}\lrb{\beta_0^+\cdot \dd\mlU_\sharp+\ddg_0(z_2;\dd\mlU,\dd\mlU_-,\dd\eta';\ddeta^*)},
\end{equation}
where $\dd\mlF=(\dd{f}_1,\dd{f}_2)$, and
\begin{equation}
\ddg_0(z_2;\dd\mlU,\dd\mlU_-,\dd\eta';\ddeta^*)= \ddeta'[\bp]-\beta_0^+\cdot \dd\mlU+G_0(\mlU,\mlU_-(\eta(z_2),z_2)).
\end{equation}

\subsection{The solvability condition and well-posedness for the iteration system} Following the same methodology employed in constructing $(\dot\mlU_+,\deta';\bet^*)$ in the previous section, we can determine $\ddeta^*$ through the solvability condition. The  solvability condition to the the first-order elliptic system $(\ddu_{\sharp,1},\ddu_{\sharp,2})$ in \eqref{BVP_dd_sharp_u1u2} can be formulated as
\begin{equation}\label{nonlin_com_condi}
\I(\dd\eta^*;\dd\mlU,\dd\eta',\dd\mlU_-)=0,
\end{equation}
where
\begin{equation*}
\begin{split}
\I(\dd\eta^*;\dd\mlU,\dd\eta',\dd\mlU_-)=&
\int_{\bet^*}^{L_1}\ddg_4(z_1;\dd\mlU;\dd\eta^*)\,dz_1\\
&-\frac{1-\bM_+^2}{\brho \bu}\int_0^1\lrb{\ddg_1 -\ddg_3}(z_2;\dd\mlU,\dd\mlU_-,\dd\eta';\ddeta^*)\,dz_2\\
&-\frac{1}{\brho \bu}\int_{N_+^z}\dd{f}_1(\dd\mlU,\dd\eta';\dd\eta^*)\,dz_1dz_2\\
:=&\I^1+\I^2+\I^3.
\end{split}
\end{equation*}

For any $\vep>0$, we define the function space
\begin{equation}\label{def_spa_dN}
\mr{\dN}_{\vep}:=\left\{(\dd\mlU,\ddeta'):
\|\dd\mlU-\dot\mlU_+\|_{1,\al;N_+^z}^{(-\al)}+\|\ddeta'-\dot\eta'\|_{1,\al;\Ga_s^z}^{(-\al)}\leq \vep
\right\},
\end{equation}
which constitutes an $\vep$-neighborhood of $(\dot\mlU_+,\deta')$.

The following lemma can be established.
\begin{lemma}\label{lem_nonlin_comp}
There exists a small constant $0<\si_2<\si_1$ such that for any $\si\in(0,\si_2)$, if $(\dd\mlU,\ddeta')\in\mr{\dN}_{\si^{3/2}}$ and $f'(\bet^*)\neq 0$, then the equation \eqref{nonlin_com_condi} admits a unique solution $\ddeta^*$ satisfying
\begin{equation}\label{est_ddeta*}
|\ddeta^*|\leq C\si,
\end{equation}
where the constant $C$ depends on the background flow.
\end{lemma}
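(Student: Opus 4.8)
The plan is to regard $\I$ as a scalar function of the single real variable $\dd\eta^*$, with $(\dd\mlU,\dd\eta',\dd\mlU_-)\in\mr{\dN}_{\si^{3/2}}$ held fixed as parameters, and to solve $\I(\dd\eta^*)=0$ by a strict monotonicity argument that mirrors exactly the one fixing $\bet^*$ through \eqref{comp_condi} in Section~\ref{sec3}. Two quantitative facts drive the whole argument: first, $\I$ is small at $\dd\eta^*=0$, of size $O(\si^2)$; second, $\partial_{\dd\eta^*}\I$ is of size $\si$ and bounded away from zero because $f'(\bet^*)\neq 0$. Granting these, the intermediate value theorem together with strict monotonicity produces a unique root with $|\dd\eta^*|\lesssim |\I(0)|/|\partial_{\dd\eta^*}\I|\lesssim \si^2/\si=\si$, which is precisely \eqref{est_ddeta*}.

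First I would verify $|\I(0)|\le C\si^2$. The crucial structural cancellation is that the interface data $\ddg_1,\ddg_s$ on $\Ga_s^z$ depend on the \emph{downstream} iterate $\dd\mlU$ only through quadratic remainders: since $\bar\mlB_{s+}\dd\mlV=\beta^+\cdot\dd\mlU$ holds identically (using $\ddB=\ddka=0$ and that the second components of the $\beta_j^+$ vanish), the definition \eqref{def_mfg} reduces to $\mf{g}=-(\beta^-\cdot\dd\mlU_-)+O(|\dd\mlU|^2+|\dd\mlU_-|^2)$, so at linear order $\ddg_1,\ddg_s$ are determined solely by the known upstream trace. The same bookkeeping shows that $\dd{f}_1$ in \eqref{def_f1} and the exit remainder $R_p$ are $O(\si^2)$, hence $\I^3=O(\si^2)$. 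Consequently, replacing $(\dd\mlU,\dd\eta')$ by the linear data $(\dot\mlU_+,\dot\eta')$ perturbs $\I(0)$ by only $O(\si\cdot\si^{3/2})=O(\si^{5/2})$, while $\dd\mlU_-$ differs from $\dot\mlU_-$ by $O(\si^2)$ by \eqref{est_ddU_du-}. Evaluated at the linear data with $\dd\eta^*=0$, the identity $\I=\I^1+\I^2+\I^3$ collapses exactly onto the linear solvability relation \eqref{comp_condi}, which vanishes by the very choice of $\bet^*$; the leftover is $O(\si^2)$. Collecting the pieces gives $|\I(0)|\le C\si^2$.

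Next I would compute $\partial_{\dd\eta^*}\I$. The variable $\dd\eta^*$ enters $\I^1=\int_{\bet^*}^{L_1}\ddg_4\,dz_1$ through the argument $z_1+\frac{L_1-z_1}{L_1-\bet^*}\dd\eta^*$ of $f'$ in the wall datum, and enters $\I^2$ through the evaluation of the upstream trace at the shifted front $\eta(z_2)=\bet^*+\dd\eta^*-\int_{z_2}^1\dd\eta'(\tau)\,d\tau$. A change of variables in $\I^1$ recovers $\si\bu_+\big(f(L_1)-f(\bet^*+\dd\eta^*)\big)$ to leading order, while the upstream compatibility \eqref{comp_dot_U-} (inherited by $\dd\mlU_-$ up to $O(\si^2)$) converts the trace integral in $\I^2$ into a multiple of $f(\bet^*+\dd\eta^*)$. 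Differentiating in $\dd\eta^*$ and combining, the two $O(\si)$ contributions assemble into the structural constant of \eqref{def_barK0_K}, yielding $\partial_{\dd\eta^*}\I=-c_*\,\si\,f'(\bet^*)+o(\si)$ uniformly for $|\dd\eta^*|\le C\si$ and over $\mr{\dN}_{\si^{3/2}}$, with $c_*\neq0$ proportional to $\bK$. Since $\bK>0$ and $f'(\bet^*)\neq0$, there is $c_0>0$ with $|\partial_{\dd\eta^*}\I|\ge c_0\si$ for $\si$ small.

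I expect the genuine difficulty to lie in this last step: one must show that the two $O(\si)$ contributions to $\partial_{\dd\eta^*}\I$ — from the moving wall integral in $\I^1$ and from the moving shock trace in $\I^2$ — do not cancel but combine into the single nonvanishing coefficient proportional to $\bK f'(\bet^*)$, exactly mirroring the linear monotonicity $F'(\eta)=-\frac{\bK}{\bK_0}f'(\eta)$. This demands carefully retaining the linear-order terms while discarding the quadratic ones, and it is precisely here that the hypothesis $f'(\bet^*)\neq0$ is indispensable. Once the lower bound $|\partial_{\dd\eta^*}\I|\ge c_0\si$ and the estimate $|\I(0)|\le C\si^2$ are secured, strict monotonicity of $\dd\eta^*\mapsto\I(\dd\eta^*)$ on $|\dd\eta^*|\le C\si$ furnishes a unique zero, and dividing the two bounds gives $|\dd\eta^*|\le C\si$, establishing \eqref{est_ddeta*}.
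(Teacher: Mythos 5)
Your proposal is correct and follows essentially the same route as the paper: both verify that $\I$ vanishes at $(0;0,0,\dot\mlU_-)$ via the linear solvability condition \eqref{comp_condi}, expand $\I=\I^1+\I^2+\I^3$ to isolate the leading term $-\si\bK f'(\bet^*)\ddeta^*$ against $O(\si^2)$ residuals, and solve for $\ddeta^*$ using the nonvanishing $O(\si)$ derivative (the paper invokes the implicit function theorem where you use monotonicity plus the intermediate value theorem, which is equivalent for this scalar equation). The cancellation you flag as the genuine difficulty is exactly what the paper carries out, including the cancellation of the $\si\frac{\ddeta^*}{L_1-\eta^*}\bu_+\int f'$ contributions between $\I^1$ and $\I^3$.
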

\begin{proof}
It can be verified that
\begin{equation}\label{I_od0}
\I(0;0,0,\dot\mlU_-)=0.
\end{equation}
Indeed, \eqref{I_od0} coincides with the equation \eqref{comp_condi}.

To derive an expansion of  $\I(\dd\eta^*;\dd\mlU,\dd\eta',\dd\mlU_-)$ in a neighborhood of $(0;0,0,\dot\mlU_-)$, we estimate each term $\I^j,j=1,2,3$, individually. Before proceeding, we first note that for any $(\dd\mlU,\ddeta')\in\mr{\dN}_{\si^{3/2}}$, the estimate
\begin{equation}\label{assp_est_ddU}
\|\dd\mlU\|_{1,\al;N_+^z}^{(-\al)}+\|\dd\eta'\|_{1,\al;\Ga_s^z}^{(-\al)}\leq C\si,
\end{equation}
can be readily verified using \eqref{est_dotU+-_deta}.

{\bf Estimation of $\I^1$.} A change of variable yields
\begin{equation}\label{est_I1}
\begin{split}
\I^1=&\si\int_{\bet^*}^{L_1}f'\lrb{z_1+\frac{L_1-z_1}{L_1-\bet^*}\ddeta^*} (\bu_++\ddu_{1}(z_1,1))dz_1\\
=&\si \bu_+\int_{\bet^*+\dd\eta^*}^{L_1}f'(\tau)d\tau+ \si\frac{\dd\eta^*}{L_1-\eta^*} \bu_+\int_{\bet^*+\dd\eta^*}^{L_1}f'(\tau)d\tau\\
&+O(1)\si^2(1+\ddeta^*).
\end{split}
\end{equation}

{\bf Estimation of $\I^2$.} By \eqref{def_ddg3}, one has
\begin{equation*}
\begin{split}
\I^2=&-\frac{1-\bM_+^2}{\brho \bu}\int_0^1 \ddg_1(z_2;\dd\mlU,\dd\mlU_-,\dd\eta';\ddeta^*)\,dz_2\\
&-\frac{1-\bM_+^2}{\brho \bu}\int_0^1 \frac{\bp_+ + \bka^2\brho^2 \bu^2}{(\ga-1)\brho\bu C(\bar\mlU_+)\bS_+}\ddg_s(z_2;\dd\mlU,\dd\mlU_-,\dd\eta';\ddeta^*) \,dz_2\\
&-\frac{1-\bM_+^2}{\brho \bu}\si\int_0^1  \frac{P_{ex}(X_2(L_1,z_2;\dd\mlU))}{\brho\bu C(\bar\mlU_+)}\,dz_2\\
&+\frac{1-\bM_+^2}{\brho \bu}\int_0^1 \frac{R_p(z_2;\dd\mlU)}{\brho\bu C(\bar\mlU_+)} \,dz_2\\
:=&\sum_{j=1}^{4}\I^2_j
\end{split}
\end{equation*}

It's clear that
\begin{equation}\label{est_I2_4}
\I_4^2=O(1)\si^2.
\end{equation}

Recalling \eqref{def_mfg}, the definition of $\mf{g}$, we have
\begin{equation}
\begin{split}
{\bf g}=&\bar\mlB_{s+}\dd\mlV-\mlG(\mlU,\mlU_-(\eta(z_2),z_2))\\
=&\bar\mlB_{s+}\dd\mlV-\bar\mlB_{s-}\dd\mlV_-(\eta(z_2),z_2)-\mlG(\mlU,\mlU_-(\eta(z_2),z_2))\\
&+\bar\mlB_{s-}\left(\dd\mlV_-(\eta(z_2),z_2)-\dot\mlV_-(\eta(z_2),z_2)\right)\\
&+\bar\mlB_{s-}\left(\dot\mlV_-(\eta(z_2),z_2)-\dot\mlV_-(\bet^*+\ddeta^*,z_2)\right)\\
&+\bar\mlB_{s-}\dot\mlV_-(\bet^*+\ddeta^*,z_2)\\
:=&\sum_{i=1}^{4}{\bf g}^i.
\end{split}
\end{equation}

It's obvious that
\begin{equation}\label{est_g1}
{\bf g}^1=O(1)\si^2.
\end{equation}

The inequality \eqref{est_ddU_du-} implies
\begin{equation}\label{est_g2}
{\bf g}^2=\bar\mlB_{s-}\left( \dd\mlV_-(\eta(z_2),z_2)-\dot\mlV_-(\eta(z_2),z_2)\right)= O(1)\si^2.
\end{equation}

By using \eqref{est_dot_U-} and \eqref{assp_est_ddU}, we obtain
\begin{equation}\label{est_g3}
\begin{split}
{\bf g}^3&=
\bar\mlB_{s-}\left(\dot\mlV_-(\eta(z_2),z_2)-\dot\mlV_-(\bet^*+\ddeta^*,z_2)\right)\\
&=-\bar\mlB_{s-} \int_{z_2}^{1}\dd\eta'(\tau)\,d\tau\, \int_{0}^{1}\p_{z_1}\dot\mlV_-(s\eta(z_2)+(1-s)(\bet^*+\ddeta^*),z_2)\,ds\\
&=O(1)\si^2.
\end{split}
\end{equation}

Therefore, by \eqref{est_g1}-\eqref{est_g3}, we derive
\begin{equation}\label{est_ddgs_ddg1}
\begin{split}
(\ddg_s,\ddg_1)&=\bar\mlB_{s+}^{-1} {\bf g}= \bar\mlB_{s+}^{-1}\bar\mlB_{s-}\dot\mlV_-(\bet^*+\ddeta^*,z_2)+O(1)\si^2\\
&=(b_u^s,b_s^s)^\top \du_{1-}(\bet^*+\ddeta^*,z_2)+O(1)\si^2.
\end{split}
\end{equation}

As a result, we conclude that
\begin{equation}\label{est_I2_12}
\begin{split}
\I_2^1+\I_2^2=&-\frac{1-\bM_+^2}{\brho \bu}b_u^s\int_0^1  \du_{1-}(\bet^*+\ddeta^*)\,dz_2\\
&+\frac{1-\bM_+^2}{\brho \bu} b_u^1  \int_0^1 \du_{1-}(\bet^*+\ddeta^*) \,dz_2+O(1)\si^2\\
=&O(1)\si^2+\bu_+\frac{\bp_-}{\bp_+} \si f(\bet^*+\ddeta^*)\\
&-(1-\bM_+^2)\frac{(\bp_+ +\bka^2\brho^2\bu^2)}{\brho \bu C(\bar\mlU_+)} \frac{[\bp]}{\bp_+} \si f(\bet^*+\ddeta^*).
\end{split}
\end{equation}

Since $\brho\bu=1$, we have
\begin{equation*}
\begin{split}
P_{ex}(X_2(L_1,z_2;\dd\mlU))&=P_{ex}\lrb{\int_{0}^{z_2}\frac{1}{(\rho u_1)(L_1,\tau)}\,d\tau}\\
&=P_{ex}(z_2)+P_{ex}\left(\int_{0}^{z_2}\frac{1}{(\rho u_1)(L_1,\tau)}\,d\tau\right)-P_{ex}(z_2)\\
&= P_{ex}(z_2)+O(1)\si.
\end{split}
\end{equation*}

It follows that
\begin{equation}\label{est_I2_3}
\begin{split}
\I_3^2&=-\si \bK_0\int_0^1  P_{ex}(X_2(L_1,z_2;\dd\mlU))\,dz_2\\
&=-\si \bK_0\int_0^1  P_{ex}(z_2)\,dz_2+O(1)\si^2.
\end{split}
\end{equation}

Combining \eqref{est_I1}-\eqref{est_I2_4} and \eqref{est_I2_12}-\eqref{est_I2_3} yields
\begin{equation}\label{est_I1_I2}
\begin{split}
\I^1+\I^2=&O(1)\si^2(1+\ddeta^*)+ \si\frac{\dd\eta^*}{L_1-\eta^*} \bu_+\int_{\bet^*+\dd\eta^*}^{L_1}f'(\tau)d\tau\\
% &\si \bu_+ (f(L_1)-f(\bet^*+\ddeta^*))+\bu_+\frac{\bp_-}{\bp_+} \si f(\bet^*+\ddeta^*)\\
% &-(1-\bM_+^2)\frac{(\bp_+ +\bka^2\brho^2\bu^2)}{\brho \bu C(\bar\mlU_+)} \frac{[\bp]}{\bp_+} \si f(\bet^*+\ddeta^*)\\
&-\si\bK f(\bet^*+\ddeta^*)+\si\bu_+ f(L_1)-\si \bK_0\int_0^1  P_{ex}(z_2)\,dz_2\\
=&O(1)\si^2(1+\ddeta^*)+ \si\frac{\dd\eta^*}{L_1-\eta^*} \bu_+\int_{\bet^*+\dd\eta^*}^{L_1}f'(\tau)d\tau\\
&-\si \bK f'(\bet^*)\ddeta^*+O(1)\si |\ddeta^*|^2.
\end{split}
\end{equation}

{\bf Estimation of $\I^3.$} Rewrite $\dd{f}_1(\dd\mlU,\dd\eta';\dd\eta^*)$ as
\begin{equation}\label{expan_ddf2}
\begin{split}
\dd{f}_1(\dd\mlU,\dd\eta';\dd\eta^*)=&(M_1^2-\bM_+^2)\p_{z_1}\ddu_{1} +(\rho u_1 -\brho \bu) \p_{z_2} \ddu_{ 2}
+M_1M_2\p_{z_1}u_2+\rho u_2 \p_{z_2}u_1\\
&+\lrb{\frac{(1-M_1^2)\int_{z_2}^{1}\ddeta'(\tau)\,d\tau}{L_1-\eta(z_2)}-\frac{(L_1-z_1)\dd\eta' \rho u_2}{L_1-\eta(z_2)} }\p_{z_1}\ddu_1\\
&+\lrb{\frac{(L_1-z_1)\dd\eta' \rho u_1}{L_1-\eta(z_2)}
+\frac{ \ddeta^*- \int_{z_2}^{1}\ddeta'(\tau)\,d\tau}{L_1-\eta(z_2)}M_1 M_2 }\p_{z_1}\ddu_2\\
&-\ddeta^*\frac{1-M_1^2}{L_1-\eta(z_2)}\p_{z_1}\ddu_{1}\\
:=&\sum_{j=1}^4 \dd{f}_1^j.
\end{split}
\end{equation}

The first three items of $\dd{f}_1(\dd\mlU,\dd\eta';\dd\eta^*)$ can be straightforwardly estimated as follows:
\begin{equation}\label{est_f2_123}
\int_{N_+^z}\dd{f}_1^1+\dd{f}_1^2+\dd{f}_1^3\,dz_1dz_2 = O(1)\si^2\,(1+\ddeta^*).
\end{equation}

And the last term $\dd{f}_1^4$ can be further decomposed as
\begin{equation*}
\begin{split}
\dd{f}_1^4=&-\ddeta^*\frac{\bM_+^2-M_1^2}{L_1-\eta(z_2)}\p_{z_1}\ddu_{1}
-\ddeta^*\frac{1-\bM_+^2}{L_1-\eta(z_2)} \p_{z_1}\lrb{\ddu_{1}-\du_{1+} }\\
&-\ddeta^*\frac{1-\bM_+^2}{L_1-\eta(z_2)} \p_{z_1}\du_{1+}
+\frac{\ddeta^* \int_{z_2}^{1}\ddeta'(\tau) d\tau}{(L_1-\eta(z_2))(L_1-\bet^*)}\du_{1+}\\
&-\ddeta^*\frac{(1-\bM_+^2)}{L_1-\eta^*}\p_{z_1}\du_{1+}.
\end{split}
\end{equation*}

% \begin{equation*}
% \frac{1}{L_1-\eta}=\frac{1}{L_1-\eta^*}+\frac{\eta-\eta^*}{(L_1-\eta)(L_1-\eta^*)},\,
% \eta-\eta^*=\eta-\bet^*-\ddeta^*=-\int_{z_2}^{1}\ddeta'(\tau) d\tau
% \end{equation*}

Combining the first equation in \eqref{eq_dot_U+} with the boundary condition for $\du_{2_+}$ given in \eqref{bd_U+_wall}, we obtain
\begin{equation*}
\begin{split}
-(1-\bM_+^2)\int_{N_+^z}\p_{z_1}\du_{1+}\,dz_1dz_2&= \brho \bu \int_{N_+^z}\p_{z_2}\du_{2+}\,dz_1dz_2\\
&=\brho_+ \bu_+^2 \si\int_{\bet^*}^{L_1}f'(z_1)\,dz_1.
\end{split}
\end{equation*}

It follows that
\begin{equation}\label{est_f2_4}
\begin{split}
\int_{N_+^z}\dd{f}_1^4dz_1dz_2 =& O(1)\si^2\dd\eta^*+O(1)\si^\frac{3}{2}\ddeta^*\\
&+\si \brho_+ \bu_+^2 \frac{\ddeta^*}{L_1-\eta^*}\int_{\bet^*}^{L_1}f'(\tau)d\tau.
\end{split}
\end{equation}

We finally derive from \eqref{est_f2_123} and \eqref{est_f2_4} that
\begin{equation}\label{est_I3}
\begin{split}
\I^3&=-\frac{1}{\brho \bu}\int_{N_+^z} \dd{f}_1(\dd\mlU,\dd\eta';\dd\eta^*) dz_1dz_2\\
&= O(1)\si^2+ O(1)\si^\frac{3}{2}\ddeta^*-\si \frac{\ddeta^*}{L_1-\eta^*}\bu_+\int_{\bet^*}^{L_1}f'(\tau)d\tau.
\end{split}
\end{equation}

Collecting \eqref{est_I1_I2} and \eqref{est_I3}, we obtain
\begin{equation*}
\begin{split}
&\I(\dd\eta^*;\dd\mlU,\dd\eta',\dd\mlU_-)\\
=&\si\frac{\dd\eta^*}{L_1-\eta^*} \bu_+(f(L_1)-f(\bet^*+\ddeta^*))-\si \frac{\ddeta^*}{L_1-\eta^*}\bu_+(f(L_1)-f(\bet^*))\\
&+O(1)\si^2(1+\ddeta^*)-\si \bK f'(\bet^*)\ddeta^*+O(1)\si |\ddeta^*|^2 + O(1)\si^\frac{3}{2}\ddeta^*\\
=&-\si\frac{\bu_+ }{L_1-\eta^*} (f(\bet^*+\ddeta^*)-f(\bet^*))\dd\eta^*\\
&+O(1)\si^2(1+\ddeta^*)-\si \bK f'(\bet^*)\ddeta^*+O(1)\si |\ddeta^*|^2 + O(1)\si^\frac{3}{2}\ddeta^*.
\end{split}
\end{equation*}

Consequently, we derive the expansion of $\I(\dd\eta^*;\dd\mlU,\dd\eta',\dd\mlU_-)$:
\begin{equation}\label{expan_I}
\I(\dd\eta^*;\dd\mlU,\dd\eta',\dd\mlU_-)=
-\si \bK f'(\bet^*)\ddeta^*+ O(1)\si |\dd\eta^*|^2+ O(1)\si^\frac{3}{2}\ddeta^*+O(1)\si^2.
\end{equation}

The above expansion yields
\begin{equation}
\frac{\p\I}{\p\ddeta^*}(0;0,0,\dot\mlU_-)=-\si \bK f'(\bet^*)+ O(1)\si^\frac{3}{2}\neq 0,
\end{equation}
provided that $f'(\bet^*)\neq 0$ and $\si > 0$ is chosen sufficiently small.

An application of the implicit function theorem guarantees the unique existence of the solution $\ddeta^*$ to the equation \eqref{nonlin_com_condi} for given $\dd\mlU_-$ and $(\dd\mlU,\dd\eta')\in\mr{\dN}_{\si^{3/2}}$. The estimate \eqref{est_ddeta*} is then an immediate consequence of the expansion \eqref{expan_I}. The proof is complete.
\end{proof}

Under the validity of the compatibility condition, it's easy to establish the following well-posedness lemma for the first-order elliptic system \eqref{eq_sharp}.

\begin{lemma}
Under the same assumptions as in Lemma \ref{lem_nonlin_comp}, let $\ddeta^*$ be the unique solution to equation \eqref{nonlin_com_condi}. Then, the boundary value problem \eqref{BVP_dd_sharp_u1u2} admits a unique solution $\dd\u_\sharp$ satisfying the estimate
\begin{equation}\label{est_sharp_dd_u1_u2}
\begin{split}
\|\dd\u_\sharp\|_{1,\al;N_+^z}^{(-\al)}\leq& C\bigg(
\|\dd\mlF\|_{0,\al;N_+^z}^{(-\al+1)}
+\|\ddg_1\|_{1,\al;\Ga_s^z}^{(-\al)}+\|\ddg_3\|_{1,\al;\Ga_1^z}^{(-\al)}+\|\ddg_4\|_{1,\al;W_{1+}^z}\bigg),
\\
\end{split}
\end{equation}
with $\al\in(0,1)$. 
\end{lemma}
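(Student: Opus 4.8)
The plan is to mirror the potential reduction used for the linear analogue in Lemma~\ref{lem_p_th_dot+}, the only genuinely new feature being the inhomogeneous right-hand side in the first (constraint) equation of \eqref{eq_sharp}. Observe first that the principal coefficients are frozen at the background state $\bar\mlU_+=(\bu_+,0,\bS_+,\bB,\bka)$, where $M_2=0$ and $\brho\bu=1$, so that $\mlA_1(\bar\mlU_+)=\mathrm{diag}(1-\bM_+^2,\,1-\brho_+\bka^2)$ and $\mlA_2(\bar\mlU_+)$ is the off-diagonal matrix with entries $1$ and $-C(\bar\mlU_+)$. Writing $a=1-\bM_+^2$, $b=1-\brho_+\bka^2$ and $c=C(\bar\mlU_+)$, all strictly positive by \eqref{assump_bar_ka}, the symbol determinant $ab\,\xi_1^2+c\,\xi_2^2$ never vanishes for $(\xi_1,\xi_2)\ne0$, so \eqref{eq_sharp} is a constant-coefficient first-order elliptic system, and the subsonic sign conditions are exactly those that made $\lambda_\pm(\bar\mlU_+)$ a complex conjugate pair.

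To reduce \eqref{BVP_dd_sharp_u1u2} to a scalar second-order problem I would first remove the source $\dd{f}_1$ from the constraint equation $a\,\p_{z_1}\ddu_{\sharp,1}+\p_{z_2}\ddu_{\sharp,2}=\dd{f}_1$. Solving an auxiliary Poisson problem $\Delta w=\dd{f}_1$ in $N_+^z$ with homogeneous Dirichlet data and setting $(\ddu_{\sharp,1}^{p},\ddu_{\sharp,2}^{p})=(\tfrac1a\p_{z_1}w,\p_{z_2}w)$ produces a particular solution of the constraint equation lying in $C_{1,\al}^{(-\al)}$, with the weighted bound controlled by $\|\dd\mlF\|_{0,\al;N_+^z}^{(-\al+1)}$. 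Subtracting it reduces matters to a homogeneous constraint equation, with correspondingly modified source and boundary data still controlled by $\|\dd\mlF\|_{0,\al;N_+^z}^{(-\al+1)}$; for the homogeneous constraint the clean potential $\phi$ of Lemma~\ref{lem_p_th_dot+}, defined by $\p_{z_2}\phi=a\,\ddu_{\sharp,1}$ and $\p_{z_1}\phi=-\ddu_{\sharp,2}$, is well defined. Substitution into the second equation of \eqref{eq_sharp} then yields the uniformly elliptic equation $-b\,\p_{z_1}^2\phi-\tfrac{c}{a}\,\p_{z_2}^2\phi=F$ with a right-hand side $F\in C_{0,\al}^{(1-\al)}$ built from $\dd{f}_2$ and the (now $C^{0,\al}$) mixed derivatives of $w$. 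The boundary conditions of \eqref{BVP_dd_sharp_u1u2} become tangential derivative conditions for $\phi$: $\p_{z_2}\phi$ is prescribed on $\Ga_s^z$ and $\Ga_1^z$ through $\ddg_1,\ddg_3$, while $\p_{z_1}\phi$ is prescribed on the walls $W_{\i+}^z$ through $\ddg_{2\i+2}$.

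Integrating these tangential derivatives along $\p N_+^z$ recovers Dirichlet data for $\phi$, single-valued up to one additive constant precisely when the four corner values match; this closing-of-the-loop condition is nothing but the solvability relation \eqref{nonlin_com_condi} that fixes $\ddeta^*$ in Lemma~\ref{lem_nonlin_comp}, so with that choice $\phi$ solves a genuine Dirichlet problem for a constant-coefficient elliptic operator on the rectangle. I would then apply the weighted Schauder theory \cite{GT2001} in the spaces $C_{m,\al}^{(\de)}$ to obtain $\|\phi\|_{2,\al;N_+^z}^{(-1-\al)}\le C\big(\|F\|_{0,\al;N_+^z}^{(1-\al)}+\|\ddg_1\|_{1,\al;\Ga_s^z}^{(-\al)}+\|\ddg_3\|_{1,\al;\Ga_1^z}^{(-\al)}+\|\ddg_4\|_{1,\al;W_{1+}^z}\big)$, differentiate to recover $\dd\u_\sharp$, and add back the particular solution to arrive at \eqref{est_sharp_dd_u1_u2}; uniqueness follows because the homogeneous problem forces $\phi$ constant and hence $\dd\u_\sharp\equiv0$. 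The main obstacle I anticipate is concentrated at the four corners of $N_+^z$: the boundary operator changes character there, prescribing $\ddu_{\sharp,1}$ on the vertical sides versus $\ddu_{\sharp,2}$ on the horizontal walls, so both the single-valuedness of the reconstructed Dirichlet data---guaranteed only through \eqref{nonlin_com_condi}---and the correct corner regularity must be tracked in the weighted norms, and one must check that the weight exponents $-1-\al$, $1-\al$, $-\al$ are exactly compatible so that the elliptic estimate closes into the norm in \eqref{est_sharp_dd_u1_u2}.
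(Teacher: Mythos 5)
Your proposal is correct and follows essentially the same route as the paper, which dismisses this lemma as a ``straightforward adaptation'' of Lemma~\ref{lem_p_th_dot+}: introduce the potential $\phi$, reduce the first-order system to a uniformly elliptic second-order equation whose boundary data are tangential derivatives (hence, after integration around $\p N_+^z$ and the compatibility condition \eqref{nonlin_com_condi}, Dirichlet data), and apply weighted Schauder estimates. Your explicit handling of the inhomogeneous constraint equation via a particular solution of a Poisson problem is precisely the adaptation the paper leaves implicit, and your flagging of the corner/weight bookkeeping is the right point to watch.
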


The proof follows by a straightforward adaptation of the argument for Lemma \ref{lem_p_th_dot+}, and is therefore omitted.

Utilizing \eqref{exp_ddS_sharp}, one has
\begin{equation}\label{est_sharp_dd_skaq}
\begin{split}
\|\ddS_\sharp\|_{1,\al;N_+^z}^{(-\al)}= \|\ddg_s\|_{1,\al;\Ga_s^z}^{(-\al)}
\end{split}
\end{equation}

Thus, $\ddeta_\sharp$ can be expressed as
\begin{equation*}
\ddeta_\sharp'=\frac{1}{[\bp]}(\beta_0^+\cdot\dd\mlU_\sharp-\ddg_0),
\end{equation*}
yielding the estimate
\begin{equation}\label{est_sharp_dd_eta}
\|\ddeta_\sharp'\|_{1,\al;\Ga_s^z}^{(-\al)}\leq C\left( \|\dd\mlU_\sharp(\bet^*,z_2)\|_{1,\al;\Ga_s^z}^{(-\al)}
+\|\ddg_0\|_{1,\al;\Ga_s^z}^{(-\al)}
\right).
\end{equation}

In summary, we state the following theorem.
\begin{theorem}\label{thm_sharp} 
For arbitrary $(\dd\mlU,\ddeta')\in\mr{\dN}_{\si^{3/2}}$ and $f'(\bet^*)\neq 0$, let $\ddeta^*$ be constructed in Lemma \ref{lem_nonlin_comp}. Then, the linear boundary value problem \eqref{exp_ddS_sharp}-\eqref{eq_sharp_dd_eta} admits a unique solution $(\dd\mlU_\sharp,\ddeta_\sharp')$ satisfying the following estimate
\begin{equation}\label{est_thm_sharp}
\begin{split}
\|\dd\mlU_\sharp\|_{1,\al;N_+^z}^{(-\al)}+
\|\ddeta_\sharp'\|_{1,\al;\Ga_s^z}^{(-\al)}\leq&  C\bigg(
\|\dd\mlF\|_{0,\al;N_+^z}^{(-\al+1)}
+\sum_{j=0}^1\|\ddg_j\|_{1,\al;\Ga_s^z}^{(-\al)}+\|\ddg_s\|_{1,\al;\Ga_s^z}^{(-\al)}\\
&+\|\ddg_3\|_{1,\al;\Ga_1^z}^{(-\al)}+\|\ddg_4\|_{1,\al;W_{1+}^z}\bigg),
\end{split}
\end{equation}
where the constant $C$ depends on $\al,L_0,L_1$ and the background flow.
\end{theorem}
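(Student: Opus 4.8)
The plan is to assemble the claimed conclusion piece by piece, since the solution triple $(\dd\mlU_\sharp,\ddeta_\sharp')$ has already been decomposed into three ingredients, each governed by a problem that has been resolved earlier in this section. First I would record that, by \eqref{exp_sharp_B_ka}, the fluctuation takes the form $\dd\mlU_\sharp=(\ddu_{\sharp,1},\ddu_{\sharp,2},\ddS_\sharp,0,0)$, so that the weighted norm splits as
$$\|\dd\mlU_\sharp\|_{1,\al;N_+^z}^{(-\al)}\leq \|\dd\u_\sharp\|_{1,\al;N_+^z}^{(-\al)}+\|\ddS_\sharp\|_{1,\al;N_+^z}^{(-\al)}.$$
The entropy ingredient is explicit: by \eqref{exp_ddS_sharp} the function $\ddS_\sharp(z_1,z_2)=\ddg_s(z_2)$ is independent of $z_1$, so \eqref{est_sharp_dd_skaq} immediately gives $\|\ddS_\sharp\|_{1,\al;N_+^z}^{(-\al)}=\|\ddg_s\|_{1,\al;\Ga_s^z}^{(-\al)}$. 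The velocity ingredient $\dd\u_\sharp$ solves the first-order elliptic boundary value problem \eqref{BVP_dd_sharp_u1u2}, whose unique solvability and estimate \eqref{est_sharp_dd_u1_u2} have already been established. Combining these two facts yields existence and uniqueness of $\dd\mlU_\sharp$ together with the bound
$$\|\dd\mlU_\sharp\|_{1,\al;N_+^z}^{(-\al)}\leq C\Big(\|\dd\mlF\|_{0,\al;N_+^z}^{(-\al+1)}+\|\ddg_1\|_{1,\al;\Ga_s^z}^{(-\al)}+\|\ddg_3\|_{1,\al;\Ga_1^z}^{(-\al)}+\|\ddg_4\|_{1,\al;W_{1+}^z}+\|\ddg_s\|_{1,\al;\Ga_s^z}^{(-\al)}\Big).$$

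Next I would estimate the shock shape $\ddeta_\sharp'$. It is defined pointwise on $\Ga_s^z$ through \eqref{eq_sharp_dd_eta}, where $\beta_0^+\cdot\dd\mlU_\sharp=(1-\brho_+\bka^2)\ddu_{\sharp,2}$, so that \eqref{est_sharp_dd_eta} controls $\|\ddeta_\sharp'\|_{1,\al;\Ga_s^z}^{(-\al)}$ by $\|\dd\mlU_\sharp(\bet^*,\cdot)\|_{1,\al;\Ga_s^z}^{(-\al)}+\|\ddg_0\|_{1,\al;\Ga_s^z}^{(-\al)}$. To close the argument I need a trace bound controlling the weighted norm of $\dd\mlU_\sharp$ on the line $z_1=\bet^*$ by its interior norm. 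Here the geometry of the weight is decisive: the distance function $d_x=\min\{z_2,1-z_2\}$ degenerates only on the walls $z_2=0,1$, which are transverse to $\Ga_s^z$; hence restricting to $z_1=\bet^*$ never interacts with the degeneracy, the tangential derivative $\p_{z_2}\dd\mlU_\sharp(\bet^*,\cdot)$ is a genuine trace of an interior derivative carrying the same weight, and a standard trace inequality $\|\dd\mlU_\sharp(\bet^*,\cdot)\|_{1,\al;\Ga_s^z}^{(-\al)}\leq C\|\dd\mlU_\sharp\|_{1,\al;N_+^z}^{(-\al)}$ holds with the exponent $(-\al)$ preserved. Inserting the bound for $\|\dd\mlU_\sharp\|_{1,\al;N_+^z}^{(-\al)}$ then produces the corresponding estimate for $\|\ddeta_\sharp'\|_{1,\al;\Ga_s^z}^{(-\al)}$.

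Finally I would simply add the two resulting inequalities to obtain \eqref{est_thm_sharp}; uniqueness of the triple follows because $\ddS_\sharp$ and $\ddeta_\sharp'$ are prescribed explicitly in terms of the data and $\dd\u_\sharp$, while $\dd\u_\sharp$ is the unique solution of \eqref{BVP_dd_sharp_u1u2}. I expect the only point beyond bookkeeping to be the trace inequality in the weighted Hölder space; because the weight is transverse to the shock front this is routine, but it must be phrased carefully so that no loss of the weight exponent occurs when passing from $N_+^z$ to $\Ga_s^z$. Every remaining step is a direct substitution of the already-established estimates \eqref{est_sharp_dd_u1_u2}, \eqref{est_sharp_dd_skaq}, and \eqref{est_sharp_dd_eta}, so the proof reduces to collecting these bounds.
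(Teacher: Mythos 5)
Your proposal is correct and follows exactly the route the paper takes: Theorem \ref{thm_sharp} is stated there as a summary obtained by combining \eqref{est_sharp_dd_u1_u2}, \eqref{est_sharp_dd_skaq} and \eqref{est_sharp_dd_eta}, which is precisely your assembly. Your additional observation about the trace bound on $\Ga_s^z$ is a detail the paper leaves implicit, and your justification is right: since the weight $d_x=\min\{z_2,1-z_2\}$ depends only on $z_2$, restriction to the line $z_1=\bet^*$ only shrinks the sets over which the weighted suprema are taken, so no exponent is lost.
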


\subsection{Contractive mapping} The analysis in the previous subsection suggests that we can define a mapping 
\begin{equation}\label{def_T}
\mr{T}:(\dd\mlU,\ddeta')\mapsto(\dd\mlU_\sharp,\ddeta_\sharp'),
\end{equation}
between suitable function spaces. We shall show that for sufficiently small $\si>0$, the operator $\mr{T}$ maps the space $\mr{\dN}_{\si^{3/2}}$ into itself.
\begin{lemma}
Under the same assumptions as in Theorem \ref{thm_sharp}, there exists a constant  $0<\si_3 <\si_2$  such that for all $\si\in(0,\si_3)$, the mapping $\mr{T}$ is well-defined on the space $\mr{\dN}_{\si^{3/2}}$.
\end{lemma}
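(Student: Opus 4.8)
The plan is to control the image $(\dd\mlU_\sharp,\ddeta_\sharp')$ through its deviation from the initial approximation $(\dot\mlU_+,\deta')$ supplied by Theorem~\ref{thm_dot_U}, rather than estimating it from scratch. The decisive structural observation is that both pairs are governed by the \emph{same} constant-coefficient first-order elliptic operator $\mlA_1(\bar\mlU_+)\p_{z_1}+\mlA_2(\bar\mlU_+)\p_{z_2}$: the iterate $\dd\u_\sharp$ solves \eqref{BVP_dd_sharp_u1u2} with source $\dd\mlF=(\dd{f}_1,\dd{f}_2)$ and boundary data $\ddg_1,\ddg_3,\ddg_4$ (and entropy $\ddS_\sharp=\ddg_s$), while $\dot\u_+$ solves \eqref{BVP_dot_u1_u2} with source $(0,\dot{f}_2)$, data $\dg_1,\dg_3,\dg_4$ and $\dS_+=\dg_s$. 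Since $\ddeta^*$ has been fixed through the solvability condition \eqref{nonlin_com_condi} in Lemma~\ref{lem_nonlin_comp}, and $\bet^*$ through \eqref{comp_condi}, both boundary value problems are solvable; hence the difference solves the same operator with the \emph{difference} data and is itself compatible, so the a priori estimate behind Theorem~\ref{thm_sharp} applies to it and gives
\begin{equation*}
\begin{aligned}
\|\dd\mlU_\sharp-\dot\mlU_+\|_{1,\al;N_+^z}^{(-\al)}
\leq{}& C\Big(\|\dd{f}_1\|_{0,\al;N_+^z}^{(-\al+1)}+\|\dd{f}_2-\dot{f}_2\|_{0,\al;N_+^z}^{(-\al+1)}\\
&+\|\ddg_1-\dg_1\|_{1,\al;\Ga_s^z}^{(-\al)}+\|\ddg_s-\dg_s\|_{1,\al;\Ga_s^z}^{(-\al)}+\|\ddg_3-\dg_3\|_{1,\al;\Ga_1^z}^{(-\al)}+\|\ddg_4-\dg_4\|_{1,\al;W_{1+}^z}\Big).
\end{aligned}
\end{equation*}

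The core of the proof is then to show that every term on the right is $O(\si^2)$. For the source I would read off from the explicit forms \eqref{def_f1}--\eqref{def_f2} that each summand of $\dd{f}_1$ is the product of a coefficient fluctuation (such as $M_1^2-\bM_+^2$ or $\rho u_1-\brho\bu$) or of the front quantities $\dd\eta',\ddeta^*$ with a first derivative of $\dd\u$; combining $\|\dd\mlU\|_{1,\al;N_+^z}^{(-\al)}\le C\si$ and $\|\dd\eta'\|_{1,\al;\Ga_s^z}^{(-\al)}\le C\si$ from \eqref{assp_est_ddU} with the bound $|\ddeta^*|\le C\si$ from Lemma~\ref{lem_nonlin_comp} renders each such product quadratically small. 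In $\dd{f}_2-\dot{f}_2$ the genuinely quadratic terms are treated identically, and the two ``linear-looking'' entropy contributions $\rho^\ga\frac{1+\ga\rho\ka^2|\mf{M}|^2}{\ga-1}\p_{z_2}\ddg_s$ and $\brho_+^\ga\frac{1+\ga\brho_+\bka^2\bM_+^2}{\ga-1}\p_{z_2}\dg_s$ are compared by splitting their difference into a coefficient fluctuation ($O(\si)$) times $\p_{z_2}\ddg_s=O(\si)$, plus the background coefficient times $\p_{z_2}(\ddg_s-\dg_s)$. The latter is $O(\si^2)$ because the expansion \eqref{est_ddgs_ddg1} gives $\ddg_s=b_s^s\du_{1-}(\bet^*+\ddeta^*,\cdot)+O(\si^2)$ whereas $\dg_s=b_s^s\du_{1-}(\bet^*,\cdot)$, so the shift $|\ddeta^*|\le C\si$ applied to the $C^{2,\al}$ function $\du_{1-}$ costs a further factor $\si$. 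The same mechanism yields $\ddg_1-\dg_1=b_u^s(\du_{1-}(\bet^*+\ddeta^*,\cdot)-\du_{1-}(\bet^*,\cdot))+O(\si^2)=O(\si^2)$ and $\ddg_s-\dg_s=O(\si^2)$; the wall difference $\ddg_4-\dg_4$ is $O(\si^2)$ since shifting the argument of $f'$ by $\tfrac{L_1-z_1}{L_1-\bet^*}\ddeta^*=O(\si)$ and the extra factor $\ddu_1=O(\si)$ are both quadratic; and $\ddg_3-\dg_3$ follows from \eqref{def_ddg3} together with $P_{ex}(X_2(L_1,z_2;\dd\mlU))=P_{ex}(z_2)+O(\si)$ and the second-order remainder $R_p=O(\si^2)$. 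Finally, from \eqref{eq_sharp_dd_eta} the front shape inherits the bound via $\ddeta_\sharp'-\deta'=\tfrac{1}{[\bp]}\big(\beta_0^+\cdot(\dd\mlU_\sharp-\dot\mlU_+)+(\ddg_0-\dg_0)\big)$ with $\ddg_0-\dg_0=O(\si^2)$.

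Assembling these bounds produces
\begin{equation*}
\|\dd\mlU_\sharp-\dot\mlU_+\|_{1,\al;N_+^z}^{(-\al)}+\|\ddeta_\sharp'-\deta'\|_{1,\al;\Ga_s^z}^{(-\al)}\le C_*\si^2,
\end{equation*}
with $C_*$ depending only on the background flow. It then suffices to pick $\si_3\in(0,\si_2)$ so small that $C_*\si_3^{1/2}\le 1$; for every $\si\in(0,\si_3)$ this forces $C_*\si^2\le\si^{3/2}$, so $(\dd\mlU_\sharp,\ddeta_\sharp')\in\mr{\dN}_{\si^{3/2}}$, i.e.\ $\mr{T}$ maps $\mr{\dN}_{\si^{3/2}}$ into itself.

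I expect the main obstacle to be verifying the $O(\si^2)$ bounds for the source in the \emph{weighted} norm $\|\cdot\|_{0,\al;N_+^z}^{(-\al+1)}$ rather than an unweighted one. Near the corner where the shock front meets the nozzle wall, $\dd\u$ only lies in $C_{1,\al}^{(-\al)}$, so its first derivatives may blow up like $d_x^{\al-1}$ and a product such as $\ddu_1\,\p_{z_1}\ddu_1$ is singular; one must check that the weight exponent $-\al+1$ is matched precisely so that $d_x^{1-\al}|\ddu_1\,\p_{z_1}\ddu_1|=O(\si^2)$ stays bounded, and likewise for the Hölder seminorm. Maintaining the quadratic smallness while respecting these weights, and ensuring the shift by $\ddeta^*$ in $\du_{1-}(\bet^*+\ddeta^*,\cdot)$ does not interfere with the corner behaviour, is the delicate bookkeeping at the heart of the argument.
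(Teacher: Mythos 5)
Your proposal is correct and follows essentially the same route as the paper: both estimate the difference $(\dd\mlU_\sharp-\dot\mlU_+,\ddeta_\sharp'-\deta')$ via the linear a priori estimate of Theorem \ref{thm_sharp} applied to the difference data, show each source/boundary difference is $O(\si^2)$ using \eqref{est_ddeta*}, \eqref{est_ddgs_ddg1} and the $O(\si)$ shift in $\du_{1-}$, and close by writing $C\si^2=C\si^{1/2}\cdot\si^{3/2}$. The only minor discrepancy is notational: the paper's text says the first component of $\dd\mlF-\dot\mlF$ is $\dd f_2$, which is a typo for $\dd f_1$, and your reading is the correct one.
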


\begin{proof}
Recalling that
\begin{equation*}
\dd\mlU_\sharp-\dot\mlU_+=(\dd\u_\sharp-\dot\u_+,\ddS_\sharp-\dS_+,0,0).
\end{equation*}

The difference of the entropy satisfies
\begin{equation}
\ddS_\sharp-\dS_+=\ddg_s-\dg_s.
\end{equation}

The boundary value problem for $\dd\u_\sharp-\dot\u_+$ reads:
\begin{equation}
\begin{cases}
\mlA_1(\bar\mlU_+)\p_{z_1}(\dd\u_\sharp-\dot\u_+) +\mlA_2(\bar\mlU_+)\p_{z_2} (\dd\u_\sharp-\dot\u_+)=\dd\mlF-\dot\mlF,\,\text{in }N_+^z,\\
\ddu_{\sharp,1}-\du_{1+}=\ddg_1-\dg_1,\,\text{on }\Ga_s^z,\\
\ddu_{\sharp,1}-\du_{1+}=\ddg_3-\dg_3,\,\text{on }\Ga_1^z,\\
\ddu_{\sharp,2}-\du_{2+}=\ddg_{2\i+2}-\dg_{2\i+2},\,\text{on }W_{\i+}^z,
\end{cases}
\end{equation}
where $\dot\mlF=(0,\dot{f}_2)$.

The difference in the shock front is given by
\begin{equation}
\ddeta'_\sharp-\deta'=\frac{1}{[\bp]}\lrb{\beta_0^+\cdot (\dd\mlU_\sharp-\dot\mlU_+)+\ddg_0-\dg_0}.
\end{equation}

It follows from Theorem \ref{thm_sharp} that
\begin{equation}\label{est_sharpU-dmlU}
\begin{split}
&\|\dd\mlU_\sharp-\dot\mlU_+\|_{1,\al;N_+^z}^{(-\al)}+
\|\ddeta_\sharp'-\deta'\|_{1,\al;\Ga_s^z}^{(-\al)}\\
\leq&  C\bigg(
\|\dd\mlF-\dot\mlF\|_{0,\al;N_+^z}^{(-\al+1)}
+\sum_{j=0}^1\|\ddg_j-\dg_j\|_{1,\al;\Ga_s^z}^{(-\al)}+\|\ddg_s-\dg_s\|_{1,\al;\Ga_s^z}^{(-\al)}\\
&+\|\ddg_3-\dg_3\|_{1,\al;\Ga_1^z}^{(-\al)}+\|\ddg_4-\dg_4\|_{1,\al;W_{1+}^z}\bigg),
\end{split}
\end{equation}

In what follows, we examine the magnitude of all terms coming from the right-hand side \eqref{est_sharpU-dmlU}.

It follows from \eqref{est_ddeta*} and \eqref{est_ddgs_ddg1} that
\begin{equation}
\begin{split}
(\ddg_s,\ddg_1)=(\dg_s,\dg_1)+O(1)\si^2.
\end{split}
\end{equation}

A direct calculus leads to
\begin{equation}
\begin{split}
\ddg_0-\dg_0=&\ddeta'[\bp]-\beta_0^+\cdot \dd\mlU+G_0(\mlU,\mlU_-(\eta(z_2),z_2))-\beta_0^-\cdot\dot\mlU_-(\bet^*,z_2)\\
=&\ddeta'[\bp]-\beta_0^+\cdot \dd\mlU -\beta_0^-\cdot \dd\mlU_-(\eta(z_2),z_2) +G_0(\mlU,\mlU_-(\eta(z_2),z_2))\\
&+\beta_0^-\cdot (\dd\mlU_-(\eta(z_2),z_2)-\dot\mlU_-(\eta(z_2),z_2))\\
&+\beta_0^-\cdot(\dot\mlU_-(\eta(z_2),z_2)-\dot\mlU_-(\bet^*+\ddeta^*,z_2))\\
&+\beta_0^-\cdot(\dot\mlU_-(\bet^*+\ddeta^*,z_2)-\dot\mlU_-(\bet^*,z_2)).
\end{split}
\end{equation}

It follows that
\begin{equation}\label{est_well_df_ddg_01s}
\sum_{j=0}^1\|\ddg_j-\dg_j\|_{1,\al;\Ga_s^z}^{(-\al)}+\|\ddg_s-\dg_s\|_{1,\al;\Ga_s^z}^{(-\al)}\leq C\si^2.
\end{equation}

We claim that
\begin{equation}\label{well_df_est_mlF}
\sum_{j=1}^{2}\|\dd\mlF-\dot\mlF\|_{0,\al;N_+^z}^{(-\al+1)} \leq C\si^2.
\end{equation}

Since the first component of $\dd\mlF-\dot\mlF$ is given by $\dd{f_2}$, the estimate \eqref{well_df_est_mlF} is a direct consequence of \eqref{expan_ddf2} and \eqref{est_ddeta*}.

The second component of $\dd\mlF-\dot\mlF$ is $\dd{f_2}-\dot{f}_2$, and its can be estimated as

\begin{equation}
\begin{split}
\|\dd{f}_2-\dot{f}_2\|_{0,\al;N_+^z}^{(-\al+1)} \leq &
% \|\ddeta\|_{1,\al;\Ga_s^z}^{-\al}\|C(\mlU) \lrb{\rho u_1\p_{z_1}\ddu_{1} +\rho u_2 \p_{z_1}\ddu_{2}}\|_{0,\al;N_+^z}^{(-\al+1)} \\
% &+ \|(\rho \ka^2-\brho_+ \bka^2)\p_{z_1}\ddu_{ 2}\|_{0,\al;N_+^z}^{(-\al+1)}
% \| (\brho \bu C(\bar\mlU_+) - \rho u_1C(\mlU))\p_{z_2}\ddu_{ 1} \|_{0,\al;N_+^z}^{(-\al+1)} \\
% &|\ddeta^*|\lrb{\|\rho \ka^2 M_1 M_2\p_{z_1}\ddu_{1}\|_{0,\al;N_+^z}^{(-\al+1)}+\|(C(\mlU)-\rho\ka^2 M_1^2) \p_{z_1}\ddu_{2}\|_{0,\al;N_+^z}^{(-\al+1)}}\\
% &\|\ddeta\|_{1,\al;\Ga_s^z}^{-\al}\lrb{\|\rho \ka^2 M_1 M_2\p_{z_1}\ddu_{1}\|_{0,\al;N_+^z}^{(-\al+1)}+\|(C(\mlU)-\rho\ka^2 M_1^2) \p_{z_1}\ddu_{2}\|_{0,\al;N_+^z}^{(-\al+1)}}\\
C \|\ddg_s\|_{1,\al;N_+^z}^{(-\al)} \left\|\rho^\ga\frac{1+\ga \rho\ka^2 M_1^2}{\ga-1}-\brho_+^\ga\frac{1+\ga \brho_+\bka^2 \bM_+^2}{\ga-1}\right\|_{0,\al;N_+^z}^{(-\al+1)}\\
&+C  \|\ddg_s-\dg_s\|_{1,\al;N_+^z}^{(-\al)} \left\| \rho^\ga\frac{1+\ga \rho\ka^2 M_1^2}{\ga-1} \right\|_{0,\al;N_+^z}^{(-\al+1)}+O(1)\si^2\\
\leq& C \si^2.
\end{split}
\end{equation}

Observing that
\begin{equation}
\begin{split}
\ddg_3-\dg_3=-b_u^1 \du_{1-}(\bet^*+\ddeta^*,z_2)-b_u^1 \du_{1-}(\bet^*,z_2)+O(1)\si^2,
\end{split}
\end{equation}
we obtain
\begin{equation}\label{est_well_df_ddg_3}
\|\ddg_3-\dg_3\|_{1,\al;\Ga_1^z}^{(-\al)}\leq C \si^2.    
\end{equation}

Finally, it follows from
\begin{equation*}
\ddg_4-\dg_4=\si \bu_+\lrb{f'(z_1+\frac{L_1-z_1}{L_1-\bet^*}\ddeta^*)-f'(z_1)} 
+\si f'(z_1+\frac{L_1-z_1}{L_1-\bet^*}\ddeta^*) \ddu_1,
\end{equation*}
that
\begin{equation}\label{est_well_df_ddg_4}
\|\ddg_4-\dg_4\|_{1,\al;\Ga_1^z}^{(-\al)}\leq C \si^2.    
\end{equation}

Combining the estimates \eqref{est_well_df_ddg_01s}, \eqref{well_df_est_mlF}, \eqref{est_well_df_ddg_3} and \eqref{est_well_df_ddg_4}, we conclude from \eqref{est_sharpU-dmlU} that
\begin{equation}
\|\dd\mlU_\sharp-\dot\mlU_+\|_{1,\al;N_+^z}^{(-\al)}+
\|\ddeta_\sharp'-\deta'\|_{1,\al;\Ga_s^z}^{(-\al)}\leq C\si^2 = C\si^{\frac{1}{2}}\si^{\frac{3}{2}}.
\end{equation}

The desired result follows immediately from the inequality above.
\end{proof}

In addition, we show that the mapping $\mr{T}$ is contractive.

\begin{lemma}
There exists a constant $0<\si_4<\si_3$ such that for all $\si\in(0,\si_4)$, the mapping $\mr{T}$ defined in \eqref{def_T} is contractive in $\mr{\dN}_{\si^{3/2}}$.
\end{lemma}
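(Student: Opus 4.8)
The plan is to run a difference analysis for $\mr{T}$ that mirrors, term by term, the well-definedness argument just completed, the decisive point being that every difference of the nonlinear data now carries an extra factor of $\si$ rather than an extra power of the fluctuation size. Fix two elements $(\dd\mlU_1,\ddeta'_1)$ and $(\dd\mlU_2,\ddeta'_2)$ of $\mr{\dN}_{\si^{3/2}}$, write their $\mr{T}$-images as $(\dd\mlU_{\sharp 1},\ddeta'_{\sharp 1})$ and $(\dd\mlU_{\sharp 2},\ddeta'_{\sharp 2})$, and set
\[
\Delta:=\|\dd\mlU_1-\dd\mlU_2\|_{1,\al;N_+^z}^{(-\al)}+\|\ddeta'_1-\ddeta'_2\|_{1,\al;\Ga_s^z}^{(-\al)}.
\]
The goal is the estimate $\|\dd\mlU_{\sharp 1}-\dd\mlU_{\sharp 2}\|_{1,\al;N_+^z}^{(-\al)}+\|\ddeta'_{\sharp 1}-\ddeta'_{\sharp 2}\|_{1,\al;\Ga_s^z}^{(-\al)}\le C\si\,\Delta$, from which contraction follows by choosing $\si_4$ so small that $C\si\le\frac12$ for all $\si\in(0,\si_4)$.

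The first step is to compare the two terminal heights $\ddeta^*_1,\ddeta^*_2$, defined through the solvability condition \eqref{nonlin_com_condi} by $\I(\ddeta^*_1;\dd\mlU_1,\ddeta'_1,\dd\mlU_-)=0$ and its analogue for the index $2$ (both iterates use the \emph{same} fixed upstream solution $\dd\mlU_-$). Subtracting the two identities and splitting the result into a part in which only the first slot of $\I$ varies and a part in which only the data $(\dd\mlU,\ddeta')$ vary, I would invoke on one side the nondegeneracy $\p_{\ddeta^*}\I(0;0,0,\dot\mlU_-)=-\si\bK f'(\bet^*)+O(\si^{3/2})\neq0$ coming from the expansion \eqref{expan_I}, and on the other side the observation that $\I$ depends on $(\dd\mlU,\ddeta')$ only through terms that are either explicitly proportional to $\si$ or at least quadratic in fluctuations of size $O(\si)$, so that its variation in the data is bounded by $C\si\,\Delta$. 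Dividing by the $O(\si)$ coefficient of $\ddeta^*$ gives the Lipschitz bound $|\ddeta^*_1-\ddeta^*_2|\le C\Delta$ with $C=O(1)$; the absence of a gain of $\si$ here is harmless because $\ddeta^*$ re-enters the output estimate only multiplied by small quantities.

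Next I would subtract the two copies of the boundary value problem \eqref{BVP_dd_sharp_u1u2}: the difference $\dd\u_{\sharp 1}-\dd\u_{\sharp 2}$ solves the same constant-coefficient elliptic system with left operator $\mlA_1(\bar\mlU_+)\p_{z_1}+\mlA_2(\bar\mlU_+)\p_{z_2}$, interior source equal to the difference of the two $\dd\mlF$, and boundary data equal to the differences of $\ddg_1$, $\ddg_3$, $\ddg_4$ on the respective boundaries, while $\ddS_{\sharp 1}-\ddS_{\sharp 2}=\ddg_{s,1}-\ddg_{s,2}$ by \eqref{exp_ddS_sharp} and $\ddeta'_{\sharp 1}-\ddeta'_{\sharp 2}$ is read off from \eqref{eq_sharp_dd_eta}. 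Applying the linear estimate of Theorem \ref{thm_sharp} to this difference problem reduces everything to bounding the differences of $\ddg_0,\ddg_1,\ddg_s,\ddg_3,\ddg_4$ and of $\dd\mlF$. Each such bound is a difference-of-products computation in the weighted H\"older norms: since $\dd\mlU_1,\dd\mlU_2,\ddeta'_1,\ddeta'_2$ and $\ddeta^*_1,\ddeta^*_2$ are all $O(\si)$, every difference factorizes as a coefficient of size $O(\si)$ times either $\Delta$ or $|\ddeta^*_1-\ddeta^*_2|\le C\Delta$, exactly as the $O(1)\si^2$ bounds \eqref{est_well_df_ddg_01s}--\eqref{est_well_df_ddg_4} were obtained in the preceding lemma but now measured linearly in $\Delta$. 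Collecting these, the right-hand side of the Theorem \ref{thm_sharp} estimate is $\le C\si\,\Delta$, which is the desired contraction.

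I expect the genuine difficulty to be concentrated in the difference estimates for the interior sources $\dd\mlF=(\dd f_1,\dd f_2)$ and for $\ddg_s$. The expressions \eqref{def_f1}--\eqref{def_f2} combine products of first derivatives of $\dd\mlU$, the rational weights $\frac{L_1-z_1}{L_1-\eta(z_2)}$ and $\frac{\ddeta^*-\int_{z_2}^{1}\ddeta'(\tau)\,d\tau}{L_1-\eta(z_2)}$, and compositions of $\rho,p,M_i,C(\mlU)$ with the unknown; differencing the two iterates therefore forces one to track simultaneously the variation of $\eta$ through both $\ddeta^*$ and $\ddeta'$, the variation of the coefficients through the nonlinear constitutive relations \eqref{exp_tilde_rho_p}, and the boundary singularity near $z_2=0,1$ encoded in the $(-\al+1)$ weight. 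The crux is to verify that each of these variations still factors as $O(\si)$ times $\Delta$ with no uncompensated singular weight; granting this together with the Lipschitz control of $\ddeta^*$, the contraction of $\mr{T}$ follows, and its unique fixed point yields the transonic shock solution asserted in Theorem \ref{main_thm}.
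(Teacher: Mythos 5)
Your proposal is correct and follows essentially the same route as the paper: a Lipschitz bound $|\ddeta^*_1-\ddeta^*_2|\le C\Delta$ obtained by differencing the two solvability identities for $\I$ and using that both $\p_{\ddeta^*}\I$ and $\nabla_{(\dd\mlU,\ddeta')}\I$ are $O(1)\si$, followed by applying the linear estimate of Theorem \ref{thm_sharp} to the difference problem and checking that each data difference ($\ddg_0,\ddg_1,\ddg_s,\ddg_3,\ddg_4,\dd\mlF$) carries an extra factor of $\si$ times $\Delta$. The paper's proof is organized in exactly this way, so no further comparison is needed.
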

\begin{proof}
Let $(\dd\mlU_j,\ddeta'_j)\in \mr{\dN}_{\si^{3/2}},\,j=1,2$ be given. Then Lemma \ref{lem_nonlin_comp} guarantees the unique existence of the corresponding $\ddeta^*_j,j=1,2$.

Consequently, for each $j=1,2$, there exists a unique solution $(\dd\mlU_{\sharp j},\ddeta_{\sharp j})$ to the boundary value problem \eqref{exp_ddS_sharp}-\eqref{eq_sharp_dd_eta}, where $(\dd\mlU,\ddeta',\ddeta^)$ are replaced by $(\dd\mlU_j,\ddeta'_j,\ddeta^*_j)$, respectively.

Then, the difference of the entropy satisfies
\begin{equation}\label{eq_sharp1-2_S}
\ddS_{\sharp1}-\ddS_{\sharp2}=\ddg_s^1-\ddg_s^2,
\end{equation}
the difference $\dd\u_{\sharp1}-\dd\u_{\sharp2}$ satisfies
\begin{equation}\label{eq_sharp1-2}
\begin{cases}
\mlA_1(\bar\mlU_+)\p_{z_1}(\dd\u_{\sharp1}-\dd\u_{\sharp2}) +\mlA_2(\dd\u_{\sharp1}-\dd\u_{\sharp2})=\dd\mlF^1-\dd\mlF^2,\,\text{in }N_+^z,\\
\ddu_{\sharp1,1}-\ddu_{\sharp2,1}=\ddg_1^1-\ddg_1^2,\,\text{on }\Ga_s^z,\\
\ddu_{\sharp1,1}-\ddu_{\sharp2,1}=\ddg_3^1-\ddg_3^2,\,\text{on }\Ga_1^z,\\
\ddu_{\sharp1,2}-\ddu_{\sharp2,2}=\ddg_{2\i+2}^1-\ddg_{2\i+2}^2,\,\text{on }W_{\i+}^z,
\end{cases}
\end{equation}
and
\begin{equation}\label{eq_sharp1-2_eta}
\ddeta'_{\sharp1}-\ddeta'_{\sharp2}=\frac{1}{[\bp]}\lrb{\beta_0^+\cdot (\dd\mlU_{\sharp1}-\dd\mlU_{\sharp2})+\ddg_0^1-\ddg_0^2}.
\end{equation}

The indices $j=1,2$ appearing as superscripts on the right-hand side of equations \eqref{eq_sharp1-2_S}-\eqref{eq_sharp1-2_eta} represent the substitution of the corresponding $(\dd\mlU_j,\ddeta'_j,\ddeta^*_j)$.

Using the estimate \eqref{est_thm_sharp} again yields
\begin{equation}\label{est_sharp1-2}
\begin{split}
&\|\dd\mlU_{\sharp1}-\dd\mlU_{\sharp2}\|_{1,\al;N_+^z}^{(-\al)}+
\|\ddeta_{\sharp1}'-\ddeta'_{\sharp2}\|_{1,\al;\Ga_s^z}^{(-\al)}\\
\leq&  C\bigg(
\|\dd\mlF^1-\dd\mlF^2\|_{0,\al;N_+^z}^{(-\al+1)}
+\sum_{j=0}^1\|\ddg_j^1-\ddg_j^2\|_{1,\al;\Ga_s^z}^{(-\al)}+\|\ddg_s^1-\ddg_s^2\|_{1,\al;\Ga_s^z}^{(-\al)}\\
&+\|\ddg_3^1-\ddg_3^2\|_{1,\al;\Ga_1^z}^{(-\al)}+\|\ddg_4^1-\ddg_4^2\|_{1,\al;W_{1+}^z}\bigg),
\end{split}
\end{equation}

We need to derive an estimate for $|\ddeta_1^*-\ddeta_2^*|$. By definition, we have
\begin{equation}\label{I1-I2}
\begin{split}
0&= \I(\dd\eta^*_1;\dd\mlU_1,\dd\eta'_1,\dd\mlU_-)- \I(\dd\eta^*_2;\dd\mlU_2,\dd\eta'_2,\dd\mlU_-)\\
&=(\ddeta_1^*-\ddeta_2^*)\int_0^1\frac{\p\I}{\p\ddeta^*}(\ddeta^*_\tau;\dd\mlU_2,\dd\eta'_2,\dd\mlU_-)\,d\tau\\
&+(\dd\mlU_1-\dd\mlU_2,\ddeta_1'-\ddeta_2')\cdot\int_0^1\nabla_{(\dd\mlU,\ddeta')}\I(\ddeta_1^*;\dd\mlU_\tau,\dd\eta'_\tau,\dd\mlU_-)\,d\tau,
\end{split}
\end{equation}
where $X_\tau=\tau X_1+(1-\tau)X_2$ for each $X=\ddeta^*,\dd\mlU,\ddeta'$.

One can derive a similar expansion to \eqref{expan_I} and consequently show that
\begin{equation*}
\begin{split}
\frac{\p\I}{\p\ddeta^*}(\ddeta^*_\tau;\dd\mlU_2,\dd\eta'_2,\dd\mlU_-)&=O(1)\si,\\ 
\nabla_{(\dd\mlU,\ddeta')}\I(\ddeta_1^*;\dd\mlU_\tau,\dd\eta'_\tau,\dd\mlU_-)&=O(1)\si,
\end{split}
\end{equation*}
where $O(1)$ is a constant depending on the background flow.

It follows from \eqref{I1-I2} that
\begin{equation*}
|\ddeta_1^*-\ddeta_2^*|\leq C\left(\|\dd\mlU_1-\dd\mlU_2\|_{1,\al;N_+^z}^{(-\al)}+
\|\ddeta_1'-\ddeta'_2\|_{1,\al;\Ga_s^z}^{(-\al)}\right).
\end{equation*}

This combining with \eqref{est_sharp1-2} yields
\begin{equation}\label{est_contrac}
\begin{split}
&\|\dd\mlU_{\sharp1}-\dd\mlU_{\sharp2}\|_{1,\al;N_+^z}^{(-\al)}+
\|\ddeta_{\sharp1}'-\ddeta'_{\sharp2}\|_{1,\al;\Ga_s^z}^{(-\al)}\\
\leq& C\si\left(
\|\dd\mlU_1-\dd\mlU_2\|_{1,\al;N_+^z}^{(-\al)}+
\|\ddeta_1'-\ddeta'_2\|_{1,\al;\Ga_s^z}^{(-\al)}
\right).  
\end{split}
\end{equation}

The contraction of $\mr{T}$ follows directly from \eqref{est_contrac} for sufficiently small $\si>0$, which completes the proof.
\end{proof}

\section*{Acknowledgment}
Weng is supported by National Natural Science Foundation of China 12571240, 12221001.
%Weng is supported by National Natural Science Foundation of China 12071359, 12221001. %Xin is supported in part by the Zheng Ge Ru Foundation, Hong Kong RGC Earmarked Research Grants CUHK-14301421, CUHK-14300917, CUHK-14302819 and CUHK-14300819, the key project of NSFC (Grant No. 12131010) and by Guangdong Basic and Applied Basic Research Foundation 2020B1515310002.

{\bf Data Availability Statement.} No data, models or code were generated or used during the study.

{\bf Conflict of interest.} On behalf of all authors, the corresponding author states that there is no conflict of interests.

\normalem
\bibliographystyle{siam}
\bibliography{RefMHD}

\end{document}